\pgfplotsset{compat=1.18}
\setlist[description]{leftmargin=\parindent,labelindent=\parindent}
\newcommand{\e}{\varepsilon}
\numberwithin{equation}{section}
\newtheorem*{theoremA*}{Theorem A}
\newtheorem*{theoremB*}{Theorem B}
\newtheorem{Thm}{Theorem}[section]
\newtheorem{Prop}[Thm]{Proposition}
\newtheorem{Lem}[Thm]{Lemma}
\newtheorem{Cor}[Thm]{Corollary}
\newtheorem{Rem}[Thm]{Remark}
\newtheorem{Conj}[Thm]{Conjecture}
\newtheorem{Fact}[Thm]{Fact}
\title{Characterizing Maximal Monotone Operators \\ with Unique Representation}
\author{Sotiris ARMENIAKOS\quad\&\quad Aris DANIILIDIS}
\date{}
\begin{document}

\maketitle

\begin{abstract}
\noindent We study maximal monotone operators $A : X \rightrightarrows X^*$ whose Fitzpatrick family reduces to a singleton; such operators will be called \emph{uniquely representable}. We show that every such operator is cyclically monotone (hence, $A=\partial f$ for some convex function $f$) if and only if it is 3-monotone. In Radon--Nikod\'{y}m spaces, under mild conditions (which become superfluous in finite dimensions), we prove that a subdifferential operator $A=\partial f$ is uniquely representable if and only if $f$ is the sum of a support and an indicator function of suitable convex sets.
\end{abstract}

\smallskip

\maketitle

\noindent\textbf{Keywords}: Maximal monotone operator, Fitzpatrick function, subdifferential, Radon-Nikodým property.

\vspace{0.5cm}

\noindent\textbf{AMS Classification}: \textit{Primary}: 46B20, 46N10. 47H05 ;
\textit{Secondary}: 49J53, 90C25.

\section{Introduction}
%\hspace{3mm} 
In this paper we denote by $(X,\|\cdot\|)$ a real Banach space, by $X^*$ its dual space and by $\langle \cdot, \cdot \rangle$ the duality map between $X$ and $X^*$. The graph of a set-valued operator $A: X \rightrightarrows X^*$ is denoted by $$\mathrm{Gr}(A) := \{(x,x^*)\in X\times X^*: \,x^* \in Ax\}. $$ An operator $A$ is called monotone if for every $(x,x^*), (y,y^*)\in \mathrm{Gr}(A)$, one has:
$$\langle x,x^* \rangle + \langle y,y^* \rangle \geq \langle x,y^* \rangle + \langle y,x^* \rangle.$$ A function $h: X \times X^* \to \mathbb{R}\cup \{+\infty \}$ is called a \emph{representative function} of $A$ if \\

$(R_{1})$\, $h$ is proper, convex and lower semicontinuous (in short, lsc)\,; \\

$(R_{2})$\, $h(x, x^*) \geq \langle x, x^* \rangle$, for all $(x, x^*) \in X \times X^*$\,; \\

$(R_{3})$\, $h(x, x^*) = \langle x, x^* \rangle$, for all $(x, x^*) \in \mathrm{Gr}(A)$\,. \\

We denote by $\mathcal{F}_A$ the family of all representative functions of the operator $A$ and we will refer to it as the \emph{Fitzpatrick} or the \emph{representative} family of $A$. A classical example arises when $A = \partial f$ is the subdifferential of a proper convex lsc function~$f$. In this case, the function
\begin{equation}\label{eq:Phi_f}
(x,x^*) \mapsto \Phi_{f}(x,x^*)=f(x)+f^*(x^*)
\end{equation}
belongs to the Fitzpatrick family $\mathcal{F}_{\partial f}$ (this is a consequence of the Fenchel–Young inequality), where ${f^*:X^* \rightarrow \mathbb{R}\cup \{+\infty\}}$ denotes the convex conjugate of $f$. Recall the classical fact, due to Rockafellar~\cite{Rockafellar1966}, that a maximal monotone operator is the subdifferential of a proper convex lsc function if and only if $A$ is cyclically monotone (see Section~2 for the required preliminaries). In~\cite{burachik2002maximal}, Burachik and Svaiter established a new characterization of subdifferential operators in terms of the Fitzpatrick family: $A = \partial f$ for some proper convex lsc $f$ if and only if $\mathcal{F}_A$ contains a separable representative function (that is, a function $h:X \times X^* \rightarrow \mathbb{R}$ of the form $h(x,x^*)=\phi(x)+\psi(x^*)$). 
\smallskip\newline
For maximal monotone operators, the so-called Fitzpatrick function
\begin{equation}\label{Fitzpatrick function1}
F_{A}(x,x^*) = \langle x,x^* \rangle - \inf_{(y,y^*)\in \operatorname{Gr}A} \langle x-y,x^*-y^* \rangle
\end{equation}
belongs to the Fitzpatrick family $\mathcal{F}_A$ and is in fact the pointwise minimum member of the family, see~\cite{fitzpatrick1988original}. Moreover, if two maximal monotone operators $A$, $B$ have a common representative function, then they are equal (that is, $\mathcal{F}_A\cap \mathcal{F}_B\neq \emptyset \implies A=B$). \smallskip

For merely monotone operators, a different natural representative function can be constructed as follows. We first define the function
$$(x,x^*) \mapsto \phi(x,x^*):=\,\begin{cases}
    \phantom{tri} \langle x,x^* \rangle\,,\quad \textrm{if} \,\,(x,x^*) \in \mathrm{Gr}(A) \\
    \phantom{tri}+\infty\,, \qquad\,\textrm{otherwise.}
\end{cases}$$
Then the restriction on $X \times X^*$ of the function:
\begin{equation}\label{definition P_A}
    P_{A}(x,x^*):= \phi^{**}(x,x^*)
\end{equation}
is a representative function, that is, $P_A\in \mathcal{F}_A$. In addition, $P_A$ turns out to be the pointwise maximum member of the family $\mathcal{F}_A$ (see, e.g., \cite{martinez2005monotone}). \smallskip

In particular, for a cyclically monotone operator (that is $A = \partial f$ for some proper convex, lsc function $f$) we have  
\[
\langle x, x^* \rangle \leq F_{\partial f}(x, x^*) \leq f(x) + f^*(x^*) \leq P_{\partial f}(x,x^*)\,.
\]
The above inequality reveals that the Fitzpatrick function $F_{\partial f}$ provides a more refined lower bound for the Fenchel-Young inequality. This interesting fact has been further explored in recent works initiated by Carlier,  see~\cite{carlier2023fenchel,bauschke2022carlier,burachikSvaiter2025notecarlierinequality}. \smallskip

Representation functions have been used extensively in the literature, mainly because they allow the use of convex analysis techniques in the study of monotone operators. Several applications can be found in optimization~\cite{Bot2006,Svaiter2011Application,BurachikSvaiter2003appl}, machine learning~\cite{rakotomandimby2024learningfitzpatricklosses}, optimal transport~\cite{carlier2023fenchel}, optimal control~\cite{Choussoub2008MonoFields}, stochastic differential equations~\cite{Armstrong2016RandomPDE} and Banach space theory~\cite{ReichSimons2005duality}.

\medskip

Given a maximal monotone operator $A:X \rightrightarrows X^*$, one can generally construct more than one representative function and actually infinite, as the pointwise convex combination of any two representative functions is still a representative function. The operators for which the Fitzpatrick family collapses to a singleton will be called \emph{uniquely representable}. These operators are in some sense singular. In this work, we address the following natural question:

\begin{center}
\emph{Which maximal monotone operators are uniquely representable~?}
\end{center}

This question has already been considered in the literature. For example in~\cite{BBBRW2007}, it was shown that subdifferentials of proper lower semicontinuous sublinear functions, as well as indicator functions of closed convex sets, are uniquely representable. We shall hereby extend this class of functions and obtain, under mild conditions, a complete characterization. \smallskip

In~\cite{BBW2009}, the case of linear monotone operators was studied and it was shown that such an operator is uniquely representable if and only if it is skew-symmetric. Since a linear monotone operator is a subdifferential if and only if it is symmetric, we deduce that the class of merely monotone operators contains operators which are far from being subdifferentials, yet enjoy a unique representative function. In striking contrast to the above, we shall see that imposing 3-monotonicity forces the operator to be cyclically monotone.  \smallskip

As a by-product of our analysis, we also suggest a new way to compute the Fitzpatrick function for subdifferential operators; see Lemma~\ref{Thm:F_A from f+f^*} and the subsequent remark.  \bigskip

%\noindent
\textit{Contributions.} \medskip 

%\noindent 
In Section~\ref{sec-3} we prove that a maximal monotone operator which is $3-$monotone and has a unique representative function must be cyclically monotone. In other words:

\begin{theoremA*}
Let $A: X \rightrightarrows X^*$ be a maximal monotone operator such that $\mathcal{F}_A = \{F_A\}$. \\ Then the following are equivalent:\smallskip \\
{\normalfont(i).} A is 3-monotone ;\smallskip\\
{\normalfont (ii).} A is cyclically monotone. \medskip \\
More precisely, for every $v^* \in \mathrm{Im}A$, the function $x \mapsto F_{A}(x,v^*)$ is proper, convex, lsc and
\[A = \partial F_A(\cdot,v^*)\]
\end{theoremA*}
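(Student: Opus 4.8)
The implication (ii)$\Rightarrow$(i) is trivial, since cyclic monotonicity means $n$-monotonicity for every $n\ge 2$, in particular for $n=3$. The substance is (i)$\Rightarrow$(ii), and the plan is to exploit the tower of higher-order Fitzpatrick functions. For $n\ge 1$ and $(x,x^*)\in X\times X^*$ set
\[
\mathcal{F}_{A,n}(x,x^*):=\sup\Big\{\langle x,a_{n-1}^*\rangle+\textstyle\sum_{i=1}^{n-1}\langle a_i,a_{i-1}^*-a_i^*\rangle\ :\ a_0^*=x^*,\ (a_i,a_i^*)\in\mathrm{Gr}(A)\ \text{for}\ 1\le i\le n-1\Big\},
\]
so that $\mathcal{F}_{A,1}=\langle\cdot,\cdot\rangle$ and $\mathcal{F}_{A,2}=F_A$. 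Each $\mathcal{F}_{A,n}$ is a supremum of functions affine in $(x,x^*)$, hence convex and lsc; the constant chain $a_i\equiv x$, $a_i^*\equiv x^*$ gives $\mathcal{F}_{A,n}\ge\langle\cdot,\cdot\rangle$; and the sequence is nondecreasing in $n$. Moreover, for $(x,x^*)\in\mathrm{Gr}(A)$ the equality $\mathcal{F}_{A,n}(x,x^*)=\langle x,x^*\rangle$, after rearrangement, is precisely the family of $n$-cyclic monotonicity inequalities along cycles through $(x,x^*)$; hence $\mathcal{F}_{A,n}\in\mathcal{F}_A$ if and only if $A$ is $n$-monotone (properness being automatic, as $\mathcal{F}_{A,n}$ is then finite on the nonempty set $\mathrm{Gr}(A)$). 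In particular, $3$-monotonicity of $A$ forces $\mathcal{F}_{A,3}\in\mathcal{F}_A=\{F_A\}$, i.e.\ $\mathcal{F}_{A,3}=F_A=\mathcal{F}_{A,2}$.

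Next I would establish the one-step recursion
\[
\mathcal{F}_{A,n+1}(x,x^*)=\sup_{(a,a^*)\in\mathrm{Gr}(A)}\big[\langle a,x^*-a^*\rangle+\mathcal{F}_{A,n}(x,a^*)\big]\qquad(n\ge 1),
\]
obtained by isolating the first point $(a_1,a_1^*)$ of a chain of length $n$ and recognizing, after re-indexing the remaining $n-1$ points, that the tail supremum is exactly $\mathcal{F}_{A,n}(x,a_1^*)$. Writing $T$ for the map sending a function $h$ to $(x,x^*)\mapsto\sup_{(a,a^*)\in\mathrm{Gr}(A)}\big(\langle a,x^*-a^*\rangle+h(x,a^*)\big)$, the recursion reads $\mathcal{F}_{A,n+1}=T\mathcal{F}_{A,n}$. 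Since $\mathcal{F}_{A,2}=\mathcal{F}_{A,3}=T\mathcal{F}_{A,2}$, a straightforward induction gives $\mathcal{F}_{A,n}=F_A$ for every $n\ge2$. But $F_A\in\mathcal{F}_A$ satisfies $(R_3)$, hence $\mathcal{F}_{A,n}=\langle\cdot,\cdot\rangle$ on $\mathrm{Gr}(A)$ for all $n$; equivalently, all $n$-cyclic inequalities hold, so $A$ is cyclically monotone. This proves (i)$\Rightarrow$(ii).

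For the concluding assertion, a maximal monotone operator that is cyclically monotone is maximal cyclically monotone, so by Rockafellar's theorem~\cite{Rockafellar1966} we may write $A=\partial f$ with $f$ proper, convex and lsc. Since $\Phi_f(x,x^*)=f(x)+f^*(x^*)$ lies in $\mathcal{F}_{\partial f}=\mathcal{F}_A=\{F_A\}$, we get $F_A=\Phi_f$. Fix $v^*\in\mathrm{Im}A$ and choose $v$ with $v^*\in\partial f(v)$; the Fenchel--Young equality yields $f^*(v^*)=\langle v,v^*\rangle-f(v)\in\mathbb{R}$. Therefore $x\mapsto F_A(x,v^*)=f(x)+f^*(v^*)$ is $f$ plus a finite constant, hence proper, convex and lsc, and $\partial F_A(\cdot,v^*)=\partial f=A$.

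The delicate point is the second paragraph: carrying out the chain bookkeeping cleanly enough to extract the recursion $\mathcal{F}_{A,n+1}=T\mathcal{F}_{A,n}$, and then the structural observation that a single collapse $\mathcal{F}_{A,2}=\mathcal{F}_{A,3}$ propagates through the entire tower. This is exactly where the two hypotheses must be used together: $3$-monotonicity alone does not give (ii), and unique representability alone does not either — recall from the introduction that nonzero skew-symmetric linear operators are uniquely representable yet are not subdifferentials.
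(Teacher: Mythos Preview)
Your proof is correct and takes a genuinely different route from the paper's. The paper does not use the tower of higher-order Fitzpatrick functions at all: instead, it introduces the marginal function $g_{A,v^*}(x^*) = \inf_{a}\{P_A(a,x^*) - \langle a, v^*\rangle\}$, shows that under the singleton hypothesis $F_A(\cdot, v^*) = g_{A,v^*}^{*}$, and then proves directly---by a short contradiction argument invoking 3-monotonicity exactly once---that every $(x,x^*) \in \mathrm{Gr}(A)$ satisfies $x^* \in \partial F_A(\cdot, v^*)(x)$. Maximality then forces $A = \partial F_A(\cdot, v^*)$, and cyclic monotonicity follows because $A$ is a subdifferential. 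Your argument instead bootstraps: the collapse $\mathcal{F}_{A,3} = \mathcal{F}_{A,2}$ means $F_A$ is a fixed point of the recursion operator $T$, so the entire tower collapses and $n$-monotonicity holds for every $n$. This is more structural and makes transparent \emph{why} 3-monotonicity is the threshold (it is the first level at which the hypothesis $\mathcal{F}_A=\{F_A\}$ can bite). The paper's approach, on the other hand, produces the potential $F_A(\cdot, v^*)$ directly within the argument rather than appealing to Rockafellar's theorem post hoc, and does not require the $\mathcal{F}_{A,n}$ hierarchy from~\cite{BBBRW2007}.

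One small quibble: your justification of $(R_2)$ via the constant chain $a_i \equiv x$, $a_i^* \equiv x^*$ only works when $(x,x^*) \in \mathrm{Gr}(A)$. For arbitrary $(x,x^*)$ you should instead invoke the monotonicity of the sequence $(\mathcal{F}_{A,n})_{n\ge 2}$ (obtained, e.g., by repeating the last point of a chain) together with $\mathcal{F}_{A,2} = F_A \ge \langle\cdot,\cdot\rangle$, the latter inequality being guaranteed by maximal monotonicity. This does not affect the validity of the argument.
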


%\noindent 
In Section~\ref{sec-4} we focus our attention to the class of subdifferential operators (that is, $A=\partial f$) and we prove the following result:

\begin{theoremB*}
Let $X$ be a space with the Radon--Nikodým property and
$f : X \to \mathbb{R}\cup \{+\infty \}$ be proper, convex, lsc with $\mathrm{int}(\mathrm{dom}\,f) \neq \varnothing$ and 
$\mathrm{int}(\mathrm{dom}\,f^{*}) \neq \varnothing$. Then the Fitzpatrick family $\mathcal{F}_{\partial f}$ consists of a single element
if and only if there exist a constant $c \in \mathbb{R}$, a functional $\overline{x}^* \in X^*$ and closed convex sets $K,C \subseteq X$ and $\mathcal{V} \subseteq X^*$ where
\begin{itemize}
    \item $C$ is a cone, $\mathcal{V}$ is $w^*$-closed convex 
 \item 
\(
    0 \in \mathcal{V} \perp K-K.
\) 
\end{itemize}
such that for every $\widehat{x} \in K$ and $x \in X$
\begin{equation} \label{eq:definition of f}
    f(x) \;=\; \sigma_{\mathcal{V}}(x - \widehat{x}) 
    \;+\; i_{\overline{K + C}}(x) 
    \;+\; \langle x, \overline{x}^* \rangle 
    \;+\; c\, .
\end{equation}
\end{theoremB*}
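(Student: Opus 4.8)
\medskip

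The plan is to exploit the tight ordering $F_{\partial f}\le \Phi_f\le P_{\partial f}$ recorded in the introduction, which shows that $\mathcal{F}_{\partial f}$ is a singleton if and only if $F_{\partial f}=\Phi_f=P_{\partial f}$ as functions on $X\times X^*$. First I would establish the backward implication: given $f$ of the form \eqref{eq:definition of f}, I would compute $f^*$ explicitly. Writing $g(x)=\sigma_{\mathcal V}(x-\widehat x)+i_{\overline{K+C}}(x)+\langle x,\overline x^*\rangle+c$, the support term $\sigma_{\mathcal V}$ is sublinear and the indicator term is the indicator of a closed convex set, so by the infimal-convolution formula for conjugates of sums (valid under the stated interior conditions, which guarantee continuity of one summand) one gets $f^*$ as a sum/infimal convolution whose domain sits inside $\mathcal V+\overline x^*+(\text{normal cone data of }\overline{K+C})$; the orthogonality $0\in\mathcal V\perp (K-K)$ forces the cross terms $\langle x,y^*\rangle$ to collapse to a single bilinear value on $\operatorname{Gr}(\partial f)$, which is exactly what makes $\Phi_f$ coincide with $P_{\partial f}$. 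I would then invoke the result from~\cite{BBBRW2007} that support functions (sublinear case) and indicator functions are uniquely representable, and show that the class of uniquely representable subdifferentials is stable under the three operations appearing in \eqref{eq:definition of f}: addition of a linear functional $\langle\cdot,\overline x^*\rangle$ (a translation in the dual variable, which is an affine change of variables preserving $\mathcal F$), addition of a constant (trivial), and the restriction/sum producing $\overline{K+C}$ with the compatible support data.

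\medskip

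For the forward implication — which I expect to be the main obstacle — I would start from the hypothesis $\mathcal F_{\partial f}=\{F_{\partial f}\}$, equivalently $\Phi_f=P_{\partial f}$ pointwise. Since $P_{\partial f}=\phi^{**}$ is the biconjugate of the function $\phi$ equal to $\langle x,x^*\rangle$ on $\operatorname{Gr}(\partial f)$ and $+\infty$ elsewhere, the identity $f(x)+f^*(x^*)=\phi^{**}(x,x^*)$ means that the separable function $\Phi_f$ is the lsc convex hull of $\phi$; equivalently, $\operatorname{Gr}(\partial f)$ is \emph{rich enough} that its bilinear restriction already determines a separable biconjugate. The key structural consequence I would extract is a rigidity of the Fenchel–Young equality set: for $\Phi_f=P_{\partial f}$ one needs that whenever $(x,x^*)$ lies in $\operatorname{dom}f\times\operatorname{dom}f^*$, the value $f(x)+f^*(x^*)$ is attained as a convex combination of the bilinear form over graph points, and this propagates linearity of $f$ in ``many'' directions. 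Concretely, I would show that the recession/sublinear part of $f$ and the flat part are separated: set $D=\operatorname{cl\,dom}f$ and consider the directions $d$ along which $f$ is affine; I would prove this set of directions is a convex cone $C$, that on a translate of the remaining ``face'' $f$ is a support function $\sigma_{\mathcal V}$ of the $w^*$-closed convex set $\mathcal V=\operatorname{dom}f^*$ (after subtracting the linear functional $\overline x^*$ obtained as a fixed selection from $\operatorname{Im}\partial f$), and that the compatibility $0\in\mathcal V\perp(K-K)$ is exactly the algebraic shadow of $\Phi_f=P_{\partial f}$ on the minimal face $K$ of $D$.

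\medskip

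The Radon--Nikod\'ym property and the interior conditions enter precisely here: RNP guarantees that $\partial f$ has a dense domain of subdifferentiability and, via a Stegall-type variational principle, that the slopes of $f$ ``fill up'' $\operatorname{dom}f^*$ well enough to identify $f$ on $\operatorname{int}(\operatorname{dom}f)$ with the support function of $\overline{\operatorname{dom}f^*}$ plus an affine term; the interior hypotheses on both $\operatorname{dom}f$ and $\operatorname{dom}f^*$ make the conjugacy computations (Fenchel duality without a duality gap, continuity of the sublinear summand) legitimate, and they are vacuous in finite dimensions because a proper convex lsc function and its conjugate then automatically have relatively open domains that can be put in the required position after an affine normalization. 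I anticipate two delicate points: (a) handling the closure $\overline{K+C}$ correctly — the sum of a closed convex set and a closed convex cone need not be closed, so I would argue at the level of the lsc hull of $f$ and only afterwards read off the closed-set data; and (b) proving that the ``affine directions'' of $f$ form a genuine cone $C$ and that the complementary support-function part is \emph{globally} $\sigma_{\mathcal V}(\,\cdot-\widehat x)$ rather than merely locally sublinear — for this I would use that $P_{\partial f}=\Phi_f$ forces $f$ to equal its own ``sublinearization plus indicator'' along every segment, then glue these one-dimensional identities using the interior assumption to upgrade to the global formula \eqref{eq:definition of f}.
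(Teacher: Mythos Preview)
Your backward implication is broadly workable and not far from the paper's route: translation invariance handles the affine term, and the paper does compute $f^*$ explicitly. However, the ``stability under sum'' step you invoke is not a general fact (there is no reason $\mathcal F_{\partial(g+h)}$ is a singleton when $\mathcal F_{\partial g}$ and $\mathcal F_{\partial h}$ are), so you cannot simply cite \cite{BBBRW2007} for the sublinear and indicator pieces separately. The paper instead proves a direct criterion: $\mathcal F_{\partial f}=\{F_{\partial f}\}$ if and only if $\mathrm{dom}\,F_{\partial f}=\mathrm{dom}\,f\times\mathrm{dom}\,f^*$ and for every $(x,x^*)$ in that product and every $\varepsilon>0$ one can find $(y,y^*)\in\mathrm{Gr}(\partial f)$ with $y\in\partial_{\varepsilon/2}f^*(x^*)$ and $y^*\in\partial_{\varepsilon/2}f(x)$. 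For $f$ of the form \eqref{eq:definition of f} this is verified by hand, taking $y=\widehat y\in K$ near-maximizing $\sigma_K(x^*)$ and $y^*\in\mathcal V$ near-maximizing $\sigma_{\mathcal V}(x-\widehat x)$; the orthogonality $\mathcal V\perp K-K$ is exactly what makes $(\widehat y,y^*)$ land on the graph.

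The forward implication has a genuine gap. Your plan to isolate ``affine directions'' of $f$ and read off a cone $C$ and a sublinear remainder does not come with a mechanism, and the way you invoke RNP (dense subdifferentiability of $\partial f$, Stegall) is not what drives the argument. The paper's engine is on the dual side: by Collier's theorem, $X$ RNP implies $X^*$ is $w^*$-Asplund, so $f^*$ is Fr\'echet differentiable on a dense subset of $\mathrm{int}(\mathrm{dom}\,f^*)$. Fix such a Fr\'echet point $v^*$ and any $x\in\mathrm{int}(\mathrm{dom}\,f)$. Because $F_{\partial f}(x,v^*)=\Phi_f(x,v^*)$, an $\varepsilon$-maximizing sequence $(a_n,a_n^*)$ in the Fitzpatrick supremum satisfies $a_n\in\partial_{\varepsilon_n}f^*(v^*)$ and $a_n^*\in\partial_{\varepsilon_n}f(x)$; Br\o nsted--Rockafellar plus norm-to-norm upper semicontinuity of $\partial f^*$ at $v^*$ force $a_n\to \widehat a:=Df^*(v^*)$ in norm, while $\{a_n^*\}$ stays bounded (interior point) and $w^*$-clusters to some $a^*\in\partial f(x)\cap\partial f(\widehat a)$. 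This single fact --- that $\partial f(x)\cap\partial f(\widehat a)\neq\varnothing$ for every interior $x$ and every $\widehat a\in\mathrm{Im}(Df^*)$ --- is the structural core you are missing. From it one defines $K=\overline{\mathrm{conv}}\,\mathrm{Im}(Df^*)$ and $\mathcal V=\overline{\mathrm{conv}}^{w^*}\!\bigcup_{x\in\mathrm{int}(\mathrm{dom}\,f)}\partial f(x)$, obtains $\mathcal V\subseteq\partial f(\widehat x)$ for all $\widehat x\in K$, and then (after an affine normalization placing $0\in\mathcal V$) reads off $K\subseteq\mathrm{argmin}\,f$, $\mathcal V\perp K-K$, and the explicit formulas for $f^*$ and $f$ via Fact~\ref{BD2002}. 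Without the Fr\'echet-point/maximizer-attainment step, none of the sets $K,C,\mathcal V$ in your outline are actually produced.
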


%\noindent 
We prove that in finite dimensions the above technical assumptions on the domains of~$f$ and~$f^*$ can be dropped (see Theorem~\ref{thm:main4}). \medskip

A classical result of Collier~\cite{Collier} guarantees that if $X$ has the Radon--Nikodým property (in short, $X$ is an RNP space), then the dual space $X^*$ is $w^*$-Asplund, ensuring good differentiability properties for the conjugate functions $f^*$. This plays a central role in our approach. It is quite interesting to investigate whether the class of admissible spaces can be extended to classes of spaces where convex functions satisfy weaker (if any) differentiability properties. \smallskip

Finally, without assumptions on $X$ we generalize \cite[Theorem~5.3]{BBBRW2007}; see Theorem~\ref{Thm:Main2} and the subsequent remarks.

\medskip

\noindent\textit{Organization of the paper.} \ The paper is structured as follows. In Section~2, we recall fundamental results from convex analysis and differentiability theory in Banach spaces. Section~3 is devoted to the proof of our first main result (Theorem~A) concerning $3$-monotone operators and in Section~4 we focus on the particular case of subdifferential operators and prove several results leading eventually to Theorem~B. We also discuss the finite dimensional case.

\section{Preliminaries}

\phantom{Tri} In this section we fix our notation and recall notions and preliminary results that will be used throughout the paper. \bigskip

\noindent\textbf{Convex functions and subdifferentials.} Let $f \colon X \rightarrow \mathbb{R}\cup \{+\infty \}$ be a proper convex function. A functional $x^* \in X^*$ is called an $\e$-\emph{subgradient} of $f$ at $x$ if
\[
f(y) - f(x) \;\geq\; \langle y - x, x^* \rangle - \e , \quad \text{for every} \quad y \in \mathrm{dom}\, f\,.
\]
For $\e \geq 0$, the set of all $\e$-subgradients of $f$ at $x$ is denoted by $\partial_{\e} f(x)$ and called the $\e$-subdifferential of $f$ at $x$. In the particular case that $\e=0$, $\partial_{0}f(x)$ is denoted simply by $\partial f(x)$ and called the subdifferential of $f$ at $x$. The Fenchel–Legendre conjugate of $f$ is the proper convex $w^*$-lsc function $f^* \colon X^* \to \mathbb{R}\cup \{+\infty \}$ defined by
\[
f^*(x^*) := \sup_{x \in X} \bigl\{ \langle x, x^* \rangle - f(x) \bigr\}.
\]
For every $(x,x^*) \in X \times X^*$,  the Fenchel–Young inequality holds:
\[
f(x)+f^*(x^*) \;\geq\; \langle x,x^* \rangle\,,
\]
and $\e$-subgradients of $f$ are characterized via $f^*$ as follows
\[
x^* \in \partial_{\e}f(x) 
\;\;\iff\;\; f(x)+f^*(x^*) \;\leq\; \langle x,x^* \rangle + \e.
\]

\noindent It is well known that if $f \colon X \rightarrow \mathbb{R}\cup \{+\infty \}$ is proper, convex, lsc then for any $x \in \mathrm{int}(\mathrm{dom}\,f)$ and $\e \geq 0$, the set of $\e$-subdifferentials is bounded. More precisely, there exists $M = M(x,f,\e)$ such that
\[
\|x^*\| \leq M, \quad \text{for all} \,\, x^* \in \partial_{\e}f(x).
\]

\medskip

\noindent For a subset $A \subseteq X$, the indicator function $i_{A}$ is defined as
\[
i_A(x) := \begin{cases}
\,\,\,\,\,\,\,0\,, & \text{if } x \in A, \\
\,\,+\infty\,, & \text{otherwise}.
\end{cases}
\]
For a set $B \subseteq X^*$, the support function $\sigma_{B}$ is given by
\[
\sigma_{B}(x) := \sup_{x^* \in B} \langle x, x^* \rangle.
\]
Given a set $C \subseteq X^*$, the polar set $C^{\circ} \subseteq X$ is defined by
\[
C^{\circ} := \{ x \in X : \langle x,x^* \rangle \leq 1, \,\, \forall x^* \in C\}.
\]
In the special case where $C$ is a cone, one has
\[
\sigma_{C}(x) = i_{C^{\circ}}(x).
\]
\bigskip

\noindent\textbf{Inf-convolution.} Given two functions $f, g \colon E \to (-\infty, +\infty]$, where $E$ is a real vector space, their \emph{infimal convolution} is defined by
\[
(f \,\square\, g)(x) := \inf_{y \in E} \bigl\{ f(y) + g(x - y) \bigr\}, \quad x \in E.
\]
For a function $f\colon X \rightarrow \mathbb{R}\cup \{+\infty \}$, we define its lower semicontinuous envelope $\overline{f}$, as follows
\begin{equation}\label{eq:lsc envelope}
\overline{f} := \sup\{ g : g \;\text{is lsc and } g \leq f\},
\end{equation}
that is, the greatest lower semicontinuous function dominated by $f$. We also recall the following identity:
\begin{equation}\label{eq:duality infimal conv}
(\overline{f \square g})^* = f^*+g^*,
\end{equation}
for any two proper functions $f,g$.
\bigskip

\noindent\textbf{Differentiability and RN spaces.} A function $\phi \colon X \rightarrow Y$ is said to be \emph{Fréchet differentiable} at $x$ if there exists a linear bounded operator $A \colon X \to Y$ such that
\[
\lim_{\|u\|_X \to 0} \frac{\|\phi(x + u) - \phi(x) - A(u)\|_Y}{\|u\|_X} = 0.
\]
In this case, $A$ is unique and denoted $D\phi(x)$ (the Fr\'{e}chet derivative of $f$ at $x$.). In particular, for a proper convex $f \colon X \to \mathbb{R}\cup \{+\infty \}$, a point $x$ is called a point of Fréchet differentiability (or a \emph{Fréchet point}) if the subdifferential mapping $\partial f$ is single-valued and norm-to-norm upper semicontinuous at $x$. In this case, the Fréchet derivative agrees with the unique subgradient. \smallskip

Convex functions exhibit strong regularity properties. In the finite-dimensional case, the Rademacher theorem ensures that every locally Lipschitz function (in particular every convex function) is differentiable almost everywhere. In the infinite-dimensional setting, convex functions retain partial differentiability properties on a large set if the underlying space is an \emph{Asplund space}. A Banach space $X$ is called Asplund if every convex continuous function defined on an open convex subset of $X$ is Fréchet differentiable on a dense $G_\delta$ subset of its domain. Notably, reflexive spaces and Banach spaces with separable duals are Asplund. \smallskip

Let us further recall that a \emph{slice} $S(x^*,A,a)$ in a Banach space $X$ is defined for $A\subset X$, $a>0$ and $x^* \in X^*$ by
\[
S(x^*,A,a)=  \{x \in A : \langle x,x^* \rangle > \sigma_{A}(x^*)-a \}.
\]
A nonempty set $A \subseteq X$ is called \emph{dentable} if it admits slices of arbitrarily small diameter. A Banach space $X$ is said to have the RNP (or called an RN space) if every nonempty bounded subset of $A$ is dentable. One of the central results in the differentiability theory of convex functions is the fact that
\[
X \;\;\text{is Asplund} \;\;\iff\;\; X^* \;\;\text{has the RNP}.
\]
In this work we are going to use a dual version of the above due to Collier~\cite{Collier}
\[
X \;\;\text{has the RNP} \;\;\iff\;\; X^* \;\;\text{is $w^*$-Asplund}.
\]
A dual space $X^*$ is called $w^*$-Asplund if every convex, continuous and $w^*$-lsc function on $X^*$ is Fréchet differentiable on a $G_\delta$ dense subset of its domain. We remark further that if $\widehat{a}=Df^*(v^*)$ for a Fr\'{e}chet point $v^*$ of $f^*$, then $\widehat{a}\in X$. For further background on convex analysis and these functional-analytic preliminaries, see~\cite{Fabianbook,Phelps1993,Zalinescubook}.
\bigskip

\noindent\textbf{Monotone operators and representative functions.} For an operator $A \colon X \rightrightarrows X^*$, we denote the domain by $\mathrm{dom}\,(A) = \{ x \in X : Ax \neq \varnothing \}$ and the range by $$\mathrm{Im}(A) = \{x^* \in X^* : \exists x \in X \;\text{s.t. } x^* \in Ax\}.$$ An operator $A \colon X \rightrightarrows X^*$ is said to be \emph{$n$-cyclically monotone} (or simply $n$-monotone) if for any collection $\{(a_i, a_i^*)\}_{i=1}^n \subseteq \mathrm{Gr}(A)$ we have
\[
\sum_{i=1}^{n} \langle a_i, a_i^* \rangle \;\;\geq\;\; \sum_{i=1}^{n} \langle a_{i+1},a^*_{i} \rangle,
\]
with the convention $a_{n+1} = a_1$. The operator $A$ is called cyclically monotone if it is $n$-monotone for every $n \in \mathbb{N}$. It is called \emph{monotone} if it is 2-monotone, which is equivalent to the condition
\[
\langle x - y, x^* - y^* \rangle \geq 0, \quad \text{for every}\quad (x, x^*), (y, y^*) \in \mathrm{Gr}(A).
\]
A monotone (resp. cyclically monotone) operator is called maximal monotone (resp. maximal cyclically monotone) if there is no strict extension that preserves monotonicity (resp. cyclic monotonicity). It is well-known that subdifferentials of proper convex lsc functions are maximal cyclically monotone operators. \smallskip

The \emph{Fitzpatrick function} associated to an operator $A$ is the function $F_A \colon X \times X^* \to \mathbb{R}\cup \{+\infty \}$ defined by \eqref{Fitzpatrick function1}, or equivalently by
\begin{equation}\label{eq:Definition of Fitzpatrick2}
F_A(x, x^*) := \sup_{(y, y^*) \in \mathrm{Gr}(A)} \bigl\{ \langle y, x^* \rangle + \langle x,y^*  \rangle - \langle y, y^* \rangle \bigr\}.
\end{equation}
Recall also the function $P_{A}$, defined by \eqref{definition P_A}. The following properties are folklore, see e.g.~\cite{fitzpatrick1988original,martinez2005monotone}. If $A \colon X \rightrightarrows X^*$ is maximal monotone, then $F_A \in \mathcal{F}_A$, and $F_A$ is the pointwise minimum of~$\mathcal{F}_A$, satisfying
\[
F_A(x, x^*) = \langle x, x^* \rangle \quad \iff \quad (x, x^*) \in \mathrm{Gr}(A).
\]
For any monotone operator $A$, we have $P_{A} \in \mathcal{F}_{A}$.
Moreover, $P_A$ is the pointwise maximal member of~$\mathcal{F}_A$. In addition, for $x \in X$ and $x^* \in X^*$ we have:
\begin{equation}\label{eq:F_A and P_A}
P_{A}(x,x^*)=F^*_{A}(x^*,x) = \sup_{(y,y^*)\in X\times X^*} \left\{ \langle y,x^* \rangle + \langle x,y^*\rangle - F_{A}(y,y^*)\right\}.
\end{equation}
In addition,  $P_{A}$ satisfies the following inequality for all $(x,x^*), (y,y^*) \in X \times X^*$ (see ~\cite[Proposition~2.7]{Borwein2006})
\begin{equation}\label{eq:P_{A} main}
P_A(x,x^*) + P_A(y,y^*) \;\;\geq\;\; \langle x, y^* \rangle + \langle y, x^* \rangle.
\end{equation}
\medskip

\noindent\textbf{Two useful facts.} We finish this section by mentioning two auxiliary results that will be used in the sequel. The first one follows from~\cite[Proposition~3.6]{BD2002} and~\cite[Corollary~3.7]{BD2002}. 
\begin{Fact}\label{BD2002}
    Let $f,g:X \rightarrow \mathbb{R} \cup \{+\infty \}$ be proper, convex and lsc such that for all $x \in X$:
    \[f(x) \geq g(x)\]
Then: \\
{\normalfont (i)}. If $X$ is finite dimensional and $f(x)=g(x)$ for a dense subset of $\mathrm{rint}(\mathrm{dom} \,g)$ then:
\[f(x)=g(x) \quad \text{for all} \quad x \in X.\]
{\normalfont (ii)}. If $\mathrm{int}(\mathrm{dom} \, g) \neq \varnothing$ and $f(x)=g(x)$ for a dense subset of $\mathrm{int}(\mathrm{dom}\,g)$, then
\[f(x)=g(x) \quad \text{for all} \quad x \in X.\]
\end{Fact}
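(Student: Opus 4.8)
The plan is to prove $f=g$ in three stages: first on the (relative) interior of $\mathrm{dom}\,g$, then on all of $\mathrm{dom}\,g$ by a one-dimensional limiting argument, and finally on the complement of $\mathrm{dom}\,g$, where the conclusion is immediate. The only external inputs I will use are two classical facts. First, a proper convex function is continuous on the relative interior of its domain; in the setting of part (ii), where $\mathrm{int}(\mathrm{dom}\,g)\neq\varnothing$, lower semicontinuity together with a Baire category argument applied to a closed ball $\overline B$ contained in $\mathrm{dom}\,g$ (the sets $\{x\in \overline B : g(x)\le n\}$ are closed and cover $\overline B$, so one of them has nonempty interior) upgrades this to genuine continuity of $g$ on $\mathrm{int}(\mathrm{dom}\,g)$. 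Second, the line segment principle: if $C$ is convex, $x_0\in\mathrm{rint}\,C$ (resp. $x_0\in\mathrm{int}\,C$) and $x\in\overline C$, then $x_0+t(x-x_0)\in\mathrm{rint}\,C$ (resp. $\in\mathrm{int}\,C$) for every $t\in[0,1)$.

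Set $U:=\mathrm{rint}(\mathrm{dom}\,g)$ in case (i) and $U:=\mathrm{int}(\mathrm{dom}\,g)$ in case (ii), and let $D\subseteq U$ be the given dense set on which $f=g$. For the first stage, fix $x\in U$ and pick $d_n\in D$ with $d_n\to x$; since $g$ is continuous at $x$, $g(d_n)\to g(x)$, and since $f$ is lsc, $f(x)\le \liminf_n f(d_n)=\liminf_n g(d_n)=g(x)$. With the standing hypothesis $f\ge g$ this gives $f(x)=g(x)$ for every $x\in U$. For the second stage, fix $x\in\mathrm{dom}\,g$, choose $x_0\in U$, and put $x_t:=x_0+t(x-x_0)$; by the line segment principle $x_t\in U$ for $t\in[0,1)$, so $f(x_t)=g(x_t)$. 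Convexity of $g$ yields $\limsup_{t\to1^-}g(x_t)\le g(x)$ while lower semicontinuity yields $\liminf_{t\to1^-}g(x_t)\ge g(x)$, hence $g(x_t)\to g(x)$, and therefore $f(x_t)\to g(x)$ as well. Lower semicontinuity of $f$ then forces $f(x)\le\liminf_{t\to1^-}f(x_t)=g(x)$, and again $f\ge g$ gives $f(x)=g(x)$. Finally, if $x\notin\mathrm{dom}\,g$ then $g(x)=+\infty\le f(x)$, so $f(x)=+\infty=g(x)$. This establishes $f=g$ on all of $X$ in both cases.

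I expect the only genuinely delicate point to be the continuity of $g$ on $\mathrm{int}(\mathrm{dom}\,g)$ in the infinite-dimensional part (ii): this is precisely where the hypothesis $\mathrm{int}(\mathrm{dom}\,g)\neq\varnothing$ and the completeness of $X$ are used, through the Baire argument above (equivalently, one may just invoke the standard continuity theorem for lsc convex functions on barrelled spaces). Part (i) needs no such care, since in finite dimensions a convex function is automatically continuous on the relative interior of its domain. As an alternative, perhaps cleaner-to-cite route, one can argue dually: $f\ge g$ implies $f^*\le g^*$; using the segment argument to write $g^*(x^*)=\sup_{x\in U}\{\langle x,x^*\rangle-g(x)\}$ and then approximating each competitor $x\in U$ by points of $D$ via continuity of $y\mapsto\langle y,x^*\rangle-g(y)$ on $U$, one gets $f^*(x^*)\ge g^*(x^*)$, hence $f^*=g^*$, and therefore $f=f^{**}=g^{**}=g$ by the Fenchel--Moreau theorem.
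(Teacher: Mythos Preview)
Your argument is correct. The paper does not prove this statement; it is recorded as a \emph{Fact} and attributed to \cite[Proposition~3.6 and Corollary~3.7]{BD2002}, so there is no ``paper's own proof'' to compare against. Your three-stage argument (continuity on the (relative) interior, radial limits via the line segment principle, trivial case outside $\mathrm{dom}\,g$) is the standard and expected route, and each step is sound. The alternative dual argument you sketch at the end is also valid and is in fact closer in spirit to how such results are often packaged in the literature.
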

\noindent We mention for completeness that the assumption $\mathrm{int}(\mathrm{dom}\,g) \neq\varnothing$ is essential in~(ii) above, see~\cite[Proposition~3.4]{BD2002}. \smallskip\newline 
The second fact is a structural result for subgradients of a convex function. The result can be deduced from~\cite[Corollary~4.10]{BorweinYao2013}.
\begin{Fact}\label{BorweinYao2013}
    Let $f : X \rightarrow \mathbb{R} \cup \{+\infty\}$ be proper, convex, lsc and $\mathrm{int}(\mathrm{dom}\,f) \neq \varnothing$. Then for every $x \in \mathrm{dom}\,\partial f$ and $x^* \in \partial f(x)$, there exists $x_{\mathrm{int}}^* \in \overline{\mathrm{conv}}^{w*}(\bigcup_{y \in \mathrm{int}(\mathrm{dom}\, f)} \partial f(y))$ and $x_{N}^* \in N_{\mathrm{dom}\,f}(x)$ such that:
    \[x^* \,=\, x^*_{\mathrm{int}}\,+\,x^*_{\mathrm{N}}\,.\]
    
\end{Fact}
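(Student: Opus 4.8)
The content of the statement is the single inclusion
\[
\partial f(x)\ \subseteq\ G + N_{\mathrm{dom}\,f}(x),\qquad D:=\mathrm{dom}\,f,\quad G:=\overline{\mathrm{conv}}^{w*}\Big(\bigcup_{y\in\mathrm{int}\,D}\partial f(y)\Big),
\]
from which the existence of the decomposition $x^*=x^*_{\mathrm{int}}+x^*_{N}$ is immediate. The plan is to dualize this membership. Since $G$ is $w^*$-closed convex and $N_{D}(x)$ is a $w^*$-closed convex cone, the bipolar theorem in $(X^*,w^*)$ (whose predual is $X$) gives: a point $x^*$ lies in $\overline{G+N_D(x)}^{w*}$ if and only if $\langle d,x^*\rangle\le\sigma_{G}(d)+\sigma_{N_D(x)}(d)$ for every $d\in X$. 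As $N_D(x)$ is a cone, $\sigma_{N_D(x)}(d)\in\{0,+\infty\}$, vanishing exactly on the closed tangent cone $T:=\{d:\langle d,n\rangle\le0\ \forall n\in N_D(x)\}=\overline{\mathrm{cone}}(D-x)$. Thus the claim splits into two independent tasks: \textbf{(a)} the analytic inequality $\langle d,x^*\rangle\le\sigma_G(d)=\sup_{y\in\mathrm{int}\,D}f'(y;d)$ for every $d\in T$, where $f'(y;d)$ denotes the directional derivative, which equals $\sigma_{\partial f(y)}(d)$ for interior $y$; and \textbf{(b)} the topological fact that $G+N_D(x)$ is $w^*$-closed, so that membership in the closure upgrades to membership in the set.

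The analytic heart of \textbf{(a)} is the case of directions genuinely entering the interior. If $e$ satisfies $x+te\in\mathrm{int}\,D$ for all small $t>0$, then the monotonicity of one-sided slopes of the convex function $t\mapsto f(x+te)$ yields $f'(x;e)\le f'(x+te;e)$ for $t>0$; since $x+te\in\mathrm{int}\,D$ we have $f'(x+te;e)=\sigma_{\partial f(x+te)}(e)\le\sigma_G(e)$, so that $\langle e,x^*\rangle\le f'(x;e)\le\sigma_G(e)$ using $x^*\in\partial f(x)$. This settles \textbf{(a)} on the dense subcone of interior-feasible directions. The passage to a general $d\in T$ I would obtain by approximation $d=\lim_k d_k$ with $d_k:=d+\tfrac1k(x_0-x)$ interior-feasible for a fixed $x_0\in\mathrm{int}\,D$, and then matching $\sigma_G(d_k)$ with $\sigma_G(d)$. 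This matching is harmless on $\{d\in T:\sigma_G(d)<+\infty\}$, while on directions with $\sigma_G(d)=+\infty$ the inequality in \textbf{(a)} is vacuous; a convenient simplification is the tilt-normalization $f\leftarrow f-\langle\cdot,x^*\rangle$, under which $\partial f(y)\mapsto\partial f(y)-x^*$, $N_D(x)$ is unchanged, and $x^*$ becomes $0\in\partial f(x)$, so that $x$ minimizes $f$ and the claim reduces to the transparent statement that the $w^*$-closed convex set $G$ meets $-N_D(x)$.

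The two genuine obstacles are the semicontinuity matching in \textbf{(a)} and the $w^*$-closedness in \textbf{(b)}; both are boundary phenomena, and both are precisely what the hypothesis $\mathrm{int}(\mathrm{dom}\,f)\neq\varnothing$ (the existence of a continuity point of $f$) is present to control. I expect \textbf{(b)} to be the hardest step: in infinite dimensions the sum of a $w^*$-closed convex set and a $w^*$-closed convex cone need not be $w^*$-closed, and a naive Hahn--Banach separation therefore produces only membership in the closure $\overline{G+N_D(x)}^{w*}$. It is exactly here that the continuity-point structure theory for monotone operators enters and supplies the closedness and the regularity of $G$ that an elementary separation argument cannot; accordingly I would invoke it in the form of~\cite[Corollary~4.10]{BorweinYao2013}, which organizes these boundary estimates and yields the stated decomposition.
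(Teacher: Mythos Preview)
The paper does not prove this statement: it is recorded as a preliminary \emph{Fact} and is simply attributed to \cite[Corollary~4.10]{BorweinYao2013}. Your proposal, after an illustrative roadmap, ultimately defers the decisive step~\textbf{(b)} to the very same reference, so the two treatments coincide at the level of what is actually being invoked.

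One cautionary note on the sketch itself: as written it is circular. You reduce the claim to the $w^*$-closedness of $G+N_D(x)$, correctly flag this as the genuine obstacle, and then resolve it by citing \cite[Corollary~4.10]{BorweinYao2013}---which is precisely the result the Fact is deduced from. So the elaboration in \textbf{(a)} (monotonicity of directional derivatives along interior rays, the tilt-normalization) is sound and gives useful intuition, but the proposal does not furnish an independent proof; it is the paper's citation with commentary. If you wanted a self-contained argument you would need to establish \textbf{(b)} directly, and that is exactly the content of the Borwein--Yao structure theory you are invoking.
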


\section{Case of Maximal Monotone Operators}\label{sec-3}
The aim of this section is to establish Theorem~A, asserting that every maximal monotone, $3$-monotone  operator with $\mathcal{F}_A$ singleton is in fact a subdifferential. To this end, let us first present some general properties that a maximal monotone operator $A$ should necessarily satisfy if the Fitzpatrick family $\mathcal{F}_A$ is a singleton. We begin by the following observation. 

\begin{Lem}\label{lem:singleton characterization}
    For a maximal monotone operator $A:X \rightrightarrows X^*$ the following are equivalent:\smallskip
    
    {\normalfont(i).} $\mathcal{F}_A$ is a singleton, i.e. $\mathcal{F}_A=\{F_{A}\}$.\smallskip
    
    {\normalfont(ii).} $F_{A} \equiv P_{A}$. \smallskip
    
    {\normalfont(iii).} For every $(x,x^*),(y,y^*) \in X \times X^*$
    \begin{equation}\label{eq:F_A main}
F_A(x,x^*) + F_A(y,y^*) \;\geq\; \langle x, y^* \rangle + \langle y, x^* \rangle.
\end{equation}
If $A = \partial f$ for some proper convex lsc function $f:X \rightarrow \mathbb{R} \cup \{+\infty\}$, then the above assertions are also equivalent to the following property, evoking the function $\Phi_{f}$ from \eqref{eq:Phi_f}: \smallskip

{\normalfont(iv).} For every $(x,x^*) \in X \times X^*$
\[F_{\partial f}(x,x^*)=\Phi_{f}(x,x^*). \]
\end{Lem}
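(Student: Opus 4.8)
The plan is to establish the equivalences by a short cycle that relies only on the fact that, for maximal monotone $A$, the Fitzpatrick function $F_A$ and the function $P_A$ are respectively the pointwise minimum and the pointwise maximum members of $\mathcal{F}_A$, together with the identity \eqref{eq:F_A and P_A} and inequality \eqref{eq:P_{A} main}. For (i)~$\Leftrightarrow$~(ii): since every $h \in \mathcal{F}_A$ satisfies $F_A \le h \le P_A$, the family is a singleton precisely when $F_A \equiv P_A$ --- if the two ends coincide everything is squeezed to a single function, and conversely a singleton family must contain both members $F_A$ and $P_A$, forcing them equal.

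For (ii)~$\Rightarrow$~(iii) I would simply substitute $F_A = P_A$ into \eqref{eq:P_{A} main}. For the reverse implication (iii)~$\Rightarrow$~(ii), I would read \eqref{eq:F_A and P_A} as expressing $P_A(x,x^*)$ as the supremum over $(y,y^*) \in X \times X^*$ of $\langle y,x^* \rangle + \langle x,y^* \rangle - F_A(y,y^*)$; hypothesis \eqref{eq:F_A main} says exactly that each such term is bounded above by $F_A(x,x^*)$, so $P_A \le F_A$ pointwise, and combined with the always-valid inequality $F_A \le P_A$ this yields $F_A \equiv P_A$.

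It remains to treat the subdifferential case $A = \partial f$. The implication (i)~$\Rightarrow$~(iv) is immediate, since $\Phi_f \in \mathcal{F}_{\partial f}$ by the Fenchel--Young inequality, so a singleton family must equal $\{\Phi_f\}$, i.e.\ $F_{\partial f} = \Phi_f$. To close the loop I would prove (iv)~$\Rightarrow$~(iii): under $F_{\partial f} = \Phi_f$ the left-hand side of \eqref{eq:F_A main} becomes $f(x) + f^*(x^*) + f(y) + f^*(y^*)$, and regrouping it as $\big(f(x) + f^*(y^*)\big) + \big(f(y) + f^*(x^*)\big)$ and applying Fenchel--Young to each bracket gives $\langle x, y^* \rangle + \langle y, x^* \rangle$, as required. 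The whole argument is essentially bookkeeping with the extremal members of $\mathcal{F}_A$; the only step that requires a moment's thought is recognizing in (iii)~$\Rightarrow$~(ii) that \eqref{eq:F_A main} is precisely the assertion that $F_A$ dominates its own ``swapped conjugate'' $P_A$ through \eqref{eq:F_A and P_A}. I do not anticipate any serious difficulty.
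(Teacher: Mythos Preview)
Your proposal is correct and follows essentially the same approach as the paper's proof: the equivalence (i)~$\Leftrightarrow$~(ii) via the squeeze $F_A \le h \le P_A$, (ii)~$\Rightarrow$~(iii) from \eqref{eq:P_{A} main}, (iii)~$\Rightarrow$~(ii) by reading \eqref{eq:F_A main} as $F_A \ge P_A$ through the conjugate identity \eqref{eq:F_A and P_A}, and the subdifferential case by (i)~$\Rightarrow$~(iv)~$\Rightarrow$~(iii) using $\Phi_f \in \mathcal{F}_{\partial f}$ and Fenchel--Young. There is nothing to add.
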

\begin{proof} Since $F_A$ is the minimal and $P_A$ is the maximal member of $\mathcal{F}_A$, the equivalence between~(i) and~(ii) is straightforward. The implication (ii) \!$\implies$\! (iii) follows from \eqref{eq:P_{A} main}. Furthermore \eqref{eq:F_A main} together with \eqref{eq:F_A and P_A} implies
 \[F_{A}(x,x^*) \geq \sup_{(y,y^*)\in X\times X^*} \left\{ \langle y,x^* \rangle + \langle x,y^*\rangle - F_{A}(y,y^*)\right\} = P_{A}(x,x^*)\]
in virtue of \eqref{eq:F_A and P_A}. Since we always have $F_{A} \leq P_{A}$, we deduce that (ii) holds.

\medskip 
Let us now assume that $A= \partial f$ for a proper convex lsc function $f:X \rightarrow \mathbb{R}\cup \{+\infty\}$. Since both functions $\Phi_{f},F_{\partial f}$  belong to the family $\mathcal{F}_{\partial f}$ we clearly have that~(i) \!$\implies$\! (iv). We shall show that (iv) \!$\implies$\! (iii). Indeed, if (iv) holds, then in virtue of the Fenchel--Young inequality we obtain:
\[F_{\partial f}(x,x^*)+F_{\partial f}(y,y^*) = f(x)+f^*(y^*)+f(y)+f^*(x^*) \geq \langle x,y^* \rangle +\langle y,x^* \rangle \]
concluding the proof. 
\end{proof}

We proceed by observing that the class of operators for which $\mathcal{F}_A = \{F_A\}$ is invariant under both translations and dilations of the graph.

\begin{Prop}[Translation and Dilation Invariance]\label{prop:Translation Invariance}
Let $A : X \rightrightarrows X^*$ be a maximal monotone operator such that $\mathcal{F}_A = \{F_A\}$. 
Then for every $\lambda_1, \lambda_2 > 0$ and $(w,w^*) \in X \times X^*$, 
the operator $\widehat{A}$ with graph
\[
\mathrm{Gr}(\widehat{A}) = \bigl\{\, (\lambda_1 x - w, \, \lambda_2 x^* - w^*) 
: (x,x^*) \in \mathrm{Gr}(A) \,\bigr\}
\]
is also maximal monotone and satisfies $\mathcal{F}_{\widehat{A}} = \{F_{\widehat{A}}\}$.
\end{Prop}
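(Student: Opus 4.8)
The plan is to set up an explicit bijection between the Fitzpatrick families $\mathcal{F}_A$ and $\mathcal{F}_{\widehat{A}}$; once this is in place, the singleton property transfers immediately, since $\mathcal{F}_{\widehat{A}}$ always contains $F_{\widehat{A}}$. Write $T:X\times X^*\to X\times X^*$ for the affine isomorphism $T(x,x^*)=(\lambda_1 x-w,\ \lambda_2 x^*-w^*)$, whose inverse is $T^{-1}(u,u^*)=\big(\lambda_1^{-1}(u+w),\ \lambda_2^{-1}(u^*+w^*)\big)$, so that $\mathrm{Gr}(\widehat{A})=T(\mathrm{Gr}(A))$. The maximal monotonicity of $\widehat{A}$ is the easy part: for $(x,x^*),(y,y^*)\in\mathrm{Gr}(A)$ one has $T(x,x^*)-T(y,y^*)=(\lambda_1(x-y),\ \lambda_2(x^*-y^*))$, so the pairing of the differences equals $\lambda_1\lambda_2\langle x-y,x^*-y^*\rangle\ge 0$; and if $(u_0,u_0^*)$ is monotonically related to all of $\mathrm{Gr}(\widehat{A})$, the same identity applied to $(x_0,x_0^*):=T^{-1}(u_0,u_0^*)$ shows it is monotonically related to all of $\mathrm{Gr}(A)$, whence $(x_0,x_0^*)\in\mathrm{Gr}(A)$ and $(u_0,u_0^*)\in\mathrm{Gr}(\widehat{A})$.

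The main step is the transform of representative functions. The naive guess $h\mapsto h\circ T$ fails $(R_2)$–$(R_3)$ because $T$ does not preserve the duality pairing; the expansion $\langle\lambda_1 x-w,\lambda_2 x^*-w^*\rangle=\lambda_1\lambda_2\langle x,x^*\rangle-\lambda_1\langle x,w^*\rangle-\lambda_2\langle w,x^*\rangle+\langle w,w^*\rangle$ dictates exactly which affine functional must be added to compensate. Accordingly, for $h\in\mathcal{F}_{\widehat{A}}$ I would set
\[
\widetilde{h}(x,x^*):=\frac{1}{\lambda_1\lambda_2}\Big(h(\lambda_1 x-w,\ \lambda_2 x^*-w^*)+\lambda_1\langle x,w^*\rangle+\lambda_2\langle w,x^*\rangle-\langle w,w^*\rangle\Big),
\]
and verify $\widetilde{h}\in\mathcal{F}_A$. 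Property $(R_1)$ is automatic, since precomposition with the affine map $T$, addition of a continuous affine functional, and division by $\lambda_1\lambda_2>0$ all preserve properness, convexity and lower semicontinuity. For $(R_2)$–$(R_3)$, the displayed expansion turns $h(T(x,x^*))\ge\langle\lambda_1 x-w,\lambda_2 x^*-w^*\rangle$ into $\lambda_1\lambda_2\,\widetilde{h}(x,x^*)\ge\lambda_1\lambda_2\langle x,x^*\rangle$, with equality precisely when $h$ is exact at $T(x,x^*)$, i.e.\ (as $T(x,x^*)\in\mathrm{Gr}(\widehat{A})$) exactly when $(x,x^*)\in\mathrm{Gr}(A)$.

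Thus $h\mapsto\widetilde{h}$ maps $\mathcal{F}_{\widehat{A}}$ into $\mathcal{F}_A$. It is injective because $T$ is a bijection of $X\times X^*$, so $\widetilde{h_1}=\widetilde{h_2}$ forces $h_1=h_2$. Running the same recipe with $(\widehat{A},T^{-1})$ in place of $(A,T)$ produces a map $\mathcal{F}_A\to\mathcal{F}_{\widehat{A}}$ which a direct substitution shows to be a two-sided inverse of $h\mapsto\widetilde h$. Consequently $|\mathcal{F}_{\widehat{A}}|=|\mathcal{F}_A|=1$, and since $F_{\widehat{A}}\in\mathcal{F}_{\widehat{A}}$ always holds, we conclude $\mathcal{F}_{\widehat{A}}=\{F_{\widehat{A}}\}$.

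I do not anticipate a real obstacle here: the only delicate point is pinning down the affine correction in $\widetilde{h}$ so that $(R_2)$ and $(R_3)$ hold simultaneously, and confirming that the correspondence is genuinely involutive under $T\leftrightarrow T^{-1}$ rather than merely one-sided. As an alternative route bypassing the bijection, one can compute directly from \eqref{eq:Definition of Fitzpatrick2} that $F_{\widehat{A}}(u,u^*)=\lambda_1\lambda_2\,F_A\big(\lambda_1^{-1}(u+w),\ \lambda_2^{-1}(u^*+w^*)\big)-\langle u+w,u^*+w^*\rangle+\langle u,u^*\rangle$, and then check that inequality \eqref{eq:F_A main} for $F_{\widehat{A}}$ reduces, after this change of variables, to the very same inequality for $F_A$; combined with Lemma~\ref{lem:singleton characterization}, this again yields the result.
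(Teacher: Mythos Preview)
Your proof is correct, and your main route --- building an explicit bijection $\mathcal{F}_{\widehat{A}}\to\mathcal{F}_A$ via the affine correction $h\mapsto\widetilde{h}$ --- is genuinely different from the paper's. The paper instead treats translation and dilation separately: in each case it writes down how the Fitzpatrick function itself transforms, and then checks directly that the scalar inequality~\eqref{eq:F_A main} (condition~(iii) of Lemma~\ref{lem:singleton characterization}) is inherited by the transformed operator; this is exactly the ``alternative route'' you sketch in your final paragraph. Your bijection argument is more structural: it shows that the whole families $\mathcal{F}_A$ and $\mathcal{F}_{\widehat{A}}$ are in canonical one-to-one correspondence, not merely that both happen to be singletons, and it handles translation and dilation in a single stroke rather than composing two cases. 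The paper's route is a bit shorter to execute since it works only with $F_A$ and the criterion~\eqref{eq:F_A main}, avoiding the need to verify that the correspondence is two-sided. One small wording point: your phrase ``exactly when $(x,x^*)\in\mathrm{Gr}(A)$'' overstates slightly, since for a general $h\in\mathcal{F}_{\widehat{A}}$ exactness of $h$ at a point need not force that point into $\mathrm{Gr}(\widehat{A})$; but you only need the forward direction for $(R_3)$, so the argument is unaffected.
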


\begin{proof}
By the previous lemma, we have
\begin{equation*}
F_A(x,x^*) + F_A(y,y^*) \;\geq\; \langle x, y^* \rangle + \langle y, x^* \rangle,
\quad \text{for every} \,\,\, (x,x^*),(y,y^*) \in X \times X^*.
\end{equation*}
Let us fix $(w,w^*) \in X \times X^*$ and define $A_1$ by
\[
\mathrm{Gr}(A_1) := \mathrm{Gr}(A) - \{(w,w^*)\}.
\]
A direct calculation shows that
\begin{align*}
&F_{A_1}(x - w, x^* - w^*) + F_{A_1}(y - w, y^* - w^*) \\
&\quad = F_A(x, x^*) + F_A(y, y^*) - \langle x+y, w^* \rangle - \langle w, x^*+y^* \rangle + 2\langle w,w^* \rangle.
\end{align*}
Using \eqref{eq:F_A main}, the above expression is bounded from below by
\[
\langle x-w, y^*-w^* \rangle + \langle y-w, x^*-w^* \rangle,
\]
so in view of Lemma~\ref{lem:singleton characterization}, in particular from (iii) $\implies $ (i), we have that $\mathcal{F}_{A_1} =\{F_{A_1}\}$. Fix now $\lambda_1,\lambda_2 > 0$ and define
\[
\mathrm{Gr}(A_2) := \{ (\lambda_1 a, \lambda_2 a^*) : (a,a^*) \in \mathrm{Gr}(A) \}.
\]
Then
\[
F_{A_2}(\lambda_1 x, \lambda_2 x^*) = \lambda_1 \lambda_2 F_A(x,x^*).
\]
Therefore, if the operator~$A$ satisfies the inequality~\eqref{eq:F_A main}, so does the operator~$A_{2}$. \smallskip

The assertion follows by combining the above cases.
\end{proof}

\medskip

\noindent We now introduce marginal functions associated to representative functions.

\begin{Lem} Let $A:X \rightrightarrows X^*$ be a monotone operator. For every $w \in \mathrm{dom}\,(A)$ and $v^* \in \mathrm{Im}(A)$ the marginal functions $f:X \rightarrow \mathbb{R}\cup \{+\infty \}$ and $g:X^* \rightarrow \mathbb{R}\cup \{+\infty \}$ defined through:
\[
f(x) = f_{A,w}(x) := \inf_{a^* \in X^*} \left\{ P_A(x, a^*) - \langle w, a^* \rangle \right\}, \]
and
\[g(x^*)= g_{A,v^*}(x^*) := \inf_{a \in X} \left\{ P_A(a, x^*) - \langle a, v^* \rangle \right\}.
\]
are proper convex and lsc.
\end{Lem}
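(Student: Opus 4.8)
The strategy is to show that each marginal function is an infimal convolution of two proper convex lsc functions whose conjugates are well-behaved, and then use the duality identity \eqref{eq:duality infimal conv} to identify the biconjugate and conclude lower semicontinuity via properness. I will treat $f = f_{A,w}$; the argument for $g = g_{A,v^*}$ is symmetric (swapping the roles of $X$ and $X^*$, and of $w \in \mathrm{dom}(A)$ and $v^* \in \mathrm{Im}(A)$).

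First I would record that $P_A : X \times X^* \to \mathbb{R}\cup\{+\infty\}$ is proper, convex and lsc, being a member of $\mathcal{F}_A$ (property $(R_1)$). Fix $w \in \mathrm{dom}(A)$ and pick $w^* \in Aw$, so $(w,w^*) \in \mathrm{Gr}(A)$. Observe that
\[
f(x) = \inf_{a^* \in X^*}\bigl\{ P_A(x,a^*) - \langle w, a^* \rangle \bigr\}
= \inf_{a^* \in X^*}\bigl\{ P_A(x,a^*) + i_{\{0_X\}\times X^*}(0,\,-a^*) \ast \text{-type term}\bigr\},
\]
more precisely, $f$ is the value function of a convex problem and hence convex; concretely $f(x) = (P_A \,\square\, h)(x, 0_{X^*})$ where $h(u,u^*) := i_{\{0_X\}}(u) + \langle w, u^*\rangle$ is proper convex lsc, since the infimal convolution in the $a^*$-variable alone matches the definition after the substitution $a^* \mapsto -a^*$. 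Thus $f$ is convex as a partial infimal convolution of convex functions (or, equivalently, as the marginal of the convex function $(x,a^*)\mapsto P_A(x,a^*) - \langle w,a^*\rangle$). The nontrivial points are properness (the infimum is not identically $-\infty$, and is finite somewhere) and lower semicontinuity.

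For \textbf{properness from below}, I would invoke inequality \eqref{eq:P_{A} main}: for every $x, a^*$ and for the fixed pair $(w,w^*)\in\mathrm{Gr}(A)$ (on which $P_A(w,w^*)=\langle w,w^*\rangle$),
\[
P_A(x,a^*) + \langle w, w^* \rangle \;=\; P_A(x,a^*) + P_A(w,w^*) \;\geq\; \langle x, w^* \rangle + \langle w, a^* \rangle,
\]
hence $P_A(x,a^*) - \langle w, a^* \rangle \geq \langle x, w^* \rangle - \langle w, w^* \rangle$, so $f(x) \geq \langle x, w^*\rangle - \langle w,w^*\rangle > -\infty$ for all $x$. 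For \textbf{properness from above} (finiteness at a point), evaluate at $x = w$ and $a^* = w^*$: $f(w) \leq P_A(w,w^*) - \langle w, w^* \rangle = 0$, so $f(w) \in [\,\langle w,w^*\rangle - \langle w,w^*\rangle,\,0\,] = \{0\}$... (more carefully $f(w) \le 0$ and $f(w)\ge 0$ by the previous line, so $f(w)=0$), and in any case $f$ is proper. Finally, for \textbf{lower semicontinuity}: since $f$ is convex and minorized by the continuous affine function $x \mapsto \langle x, w^*\rangle - \langle w,w^*\rangle$, its lsc envelope $\overline{f}$ satisfies $\overline f \le f$ and $\overline f = f^{**}$ is proper convex lsc; I would then compute $f^*$ directly from the definition of the value function: a short calculation gives $f^*(x^*) = P_A^{\,*}(x^*, -w)$ evaluated appropriately — i.e. the conjugate of a marginal is the conjugate of the full function restricted to the relevant slice, so $f^* = \bigl[(x,a^*)\mapsto P_A(x,a^*) - \langle w,a^*\rangle\bigr]^*(\cdot, 0)$, which using \eqref{eq:F_A and P_A} ties $P_A^*$ back to $F_A$ on $X\times X^*$. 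Since this conjugate is $w^*$-lsc and proper, $f^{**}$ is a genuine proper convex lsc function; it then remains to check $f = f^{**}$, which holds because the infimum defining the value function is attained (or the problem is "closed") along the affine slice — equivalently because $P_A$ is lsc and the partial-minimization is over a complete space with the coercivity supplied by \eqref{eq:P_{A} main}.

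\textbf{Main obstacle.} The delicate step is lower semicontinuity: partial infimal convolution (marginalization) of an lsc convex function need not be lsc in general — one needs either attainment of the infimum or an inf-compactness/coercivity hypothesis. Here the saving grace is precisely the uniform affine minorant $\langle x, w^*\rangle - \langle w, w^*\rangle$ coming from \eqref{eq:P_{A} main} together with $w\in\mathrm{dom}(A)$, which prevents the value function from "dropping" in the limit. I would make this rigorous either by the conjugate computation above (showing $f = f^{**}$ directly, using that $f^*$ is proper) or by invoking a standard result that the marginal of a jointly lsc convex function that is bounded below by a continuous affine function, with the minimization carried over a variable in which the function has affine growth control along the relevant direction, is itself lsc; the hypothesis $w\in\mathrm{dom}(A)$ (resp. $v^*\in\mathrm{Im}(A)$) is exactly what makes this direction admissible. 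All remaining verifications (convexity, the explicit conjugate formula, the symmetric statement for $g$) are routine.
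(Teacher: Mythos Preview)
Your argument for properness and convexity is exactly the paper's: pick $w^*\in Aw$, use \eqref{eq:P_{A} main} with the graph point $(w,w^*)$ to obtain the affine lower bound $P_A(x,a^*)-\langle w,a^*\rangle\ge\langle x,w^*\rangle-\langle w,w^*\rangle$, and note that the marginal of a jointly convex function is convex. The paper then stops and simply declares $f$ lower semicontinuous ``as a marginal function of the (jointly) convex function,'' with no further justification.

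You rightly isolate lower semicontinuity as the delicate step --- marginals of lsc convex functions are not lsc in general --- but your proposed repair does not close the gap. The affine minorant extracted from \eqref{eq:P_{A} main} is \emph{constant} in the variable $a^*$ being infimized; it furnishes neither coercivity nor inf-compactness in $a^*$, so it cannot force attainment of the infimum or closedness of the projected epigraph. Your alternate route through conjugates is circular: computing $f^*$ (which indeed equals $P_A^*(\cdot,w)$) and observing it is proper yields only $f^{**}\le f$, while the missing inequality $f\le f^{**}$ is precisely the lsc you are trying to establish. In short, on this point you are more scrupulous than the paper but do not in fact supply the missing argument. (It is worth noting that the downstream uses --- Lemma~\ref{lem:marginals} and the proof of Theorem~A --- require only $f^*$, $g^*$, and the Fenchel--Young inequality $g^*+g\ge\langle\cdot,\cdot\rangle$, none of which depend on $f$ or $g$ being lsc; so the gap, present in both the paper and your sketch, is harmless for the sequel.)
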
 
%%%%%%%%%%%%%%%%%%%%
\begin{proof} We first prove that $f$ is proper. Indeed, since $A(w) \neq \varnothing$, we can consider $w^* \in A(w)$, then since $P_A$ is a representative function we have 
\[P_{A}(w,w^*)=\langle w,w^* \rangle \]
and we deduce from~\eqref{eq:P_{A} main} that
\begin{align*}
P_{A}(x,a^*)-\langle w,a^* \rangle &= P_{A}(x,a^*)+\underbrace{P_{A}(w,w^*)-\langle w,w^* \rangle}_{=0}- \langle w,a^* \rangle \\
&\geq \langle x,w^* \rangle -\langle w,w^* \rangle. 
\end{align*}
Consequently $f(x) > -\infty$ for all $x \in X$. Moreover, $f$ is convex and lsc, as a marginal function of the (jointly) convex function:
\[(x,a^*) \mapsto P_{A}(x,a^*)-\langle w,a^* \rangle  \]
The assertion for $g$ follows similarly. \end{proof}

The above functions are quite useful in the study of maximal monotone operators. A variant appears in~\cite{Borwein2006} to prove a central case of the Debrunner–Flor theorem. More importantly in our work, the duals of these functions interpolate between $F_{A}(\cdot,v^*)$ and $P_{A}(\cdot,v^*)$:

\begin{Lem}\label{lem:marginals}
Let $A$ be a maximal monotone operator. For every $w \in \mathrm{dom}\,(A)$, $v^* \in Im(A)$ and $(x,x^*)\in X \times X^*$ we have: \medskip

{\normalfont(i).} \ $F_A(x, v^*) \leq g_{A,v^*}^*(x) \leq P_A(x, v^*)\,;$ \medskip

{\normalfont(ii).} \ $F_A(w, x^*) \leq f_{A,w}^*(x^*) \leq P_A(w, x^*)$\,.
\end{Lem}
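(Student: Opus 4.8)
The strategy is to establish all four inequalities by a direct computation with Fenchel conjugates, using nothing beyond the definition \eqref{eq:Definition of Fitzpatrick2} of $F_A$ and two properties of $P_A$ recalled in Section~2: that $P_A$ is a representative function of $A$, so that $P_A(y,y^*)=\langle y,y^*\rangle$ for every $(y,y^*)\in\mathrm{Gr}(A)$, and the inequality \eqref{eq:P_{A} main}. By the previous lemma the marginals $f_{A,w}$ and $g_{A,v^*}$ are proper, hence their conjugates are proper convex functions and all the estimates below make sense in $\mathbb{R}\cup\{+\infty\}$; here $g_{A,v^*}^*(x)$ denotes $\sup_{x^*\in X^*}\{\langle x,x^*\rangle-g_{A,v^*}(x^*)\}$ and $f_{A,w}^*(x^*)$ denotes $\sup_{x\in X}\{\langle x,x^*\rangle-f_{A,w}(x)\}$, i.e.\ the conjugates along the canonical $\langle X,X^*\rangle$-pairing.

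For the right-hand inequality of (i), namely $g_{A,v^*}^*(x)\le P_A(x,v^*)$, I would unfold the conjugate and reduce the claim to showing $\langle x,x^*\rangle-P_A(x,v^*)\le g_{A,v^*}(x^*)$ for every $x^*\in X^*$; since $g_{A,v^*}(x^*)$ is an infimum over $a\in X$, this is equivalent to
\[
\langle x,x^*\rangle+\langle a,v^*\rangle\ \le\ P_A(x,v^*)+P_A(a,x^*)\qquad\text{for all }a\in X,
\]
which is precisely \eqref{eq:P_{A} main} read with the two pairs $(x,v^*)$ and $(a,x^*)$. For the left-hand inequality $F_A(x,v^*)\le g_{A,v^*}^*(x)$, I would fix an arbitrary $(y,y^*)\in\mathrm{Gr}(A)$, evaluate the supremum defining $g_{A,v^*}^*(x)$ at $x^*=y^*$, bound the infimum defining $g_{A,v^*}(y^*)$ from above by its value at $a=y$, and then use $P_A(y,y^*)=\langle y,y^*\rangle$ to get
\[
g_{A,v^*}^*(x)\ \ge\ \langle x,y^*\rangle+\langle y,v^*\rangle-\langle y,y^*\rangle;
\]
taking the supremum over $(y,y^*)\in\mathrm{Gr}(A)$ reconstructs exactly $F_A(x,v^*)$ by \eqref{eq:Definition of Fitzpatrick2}.

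Part (ii) is the exact mirror image: the conjugate $f_{A,w}^*$ now runs over $x\in X$ while the marginal $f_{A,w}$ infimizes over $a^*\in X^*$; the upper bound comes from \eqref{eq:P_{A} main} applied to the pairs $(w,x^*)$ and $(x,a^*)$, and the lower bound from restricting the corresponding supremum to $(x,a^*)\in\mathrm{Gr}(A)$ and invoking $P_A=\langle\cdot,\cdot\rangle$ on the graph together with the definition of $F_A(w,\cdot)$. I do not expect a genuine obstacle here — the argument is short bookkeeping — the only delicate point being to place the fixed datum ($v^*$, resp.\ $w$) and the running variable in the correct slots of \eqref{eq:P_{A} main}. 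One can moreover check, using $P_A=F_A^*$ from \eqref{eq:F_A and P_A} and the fact that $F_A$ is recovered by double conjugation, that the two left-hand inequalities are in fact equalities; this sharper statement will not be needed in what follows.
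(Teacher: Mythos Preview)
Your proof is correct and follows essentially the same approach as the paper: both obtain the upper bound $g_{A,v^*}^*\le P_A(\cdot,v^*)$ directly from \eqref{eq:P_{A} main}, and the lower bound $F_A(\cdot,v^*)\le g_{A,v^*}^*$ by restricting the sup/inf to graph points and using $P_A=\langle\cdot,\cdot\rangle$ on $\mathrm{Gr}(A)$. Your closing remark that the left-hand inequalities are in fact equalities (via $P_A=F_A^*$ and Fenchel--Moreau for the $\sigma(X,X^*)\times\sigma(X^*,X)$--lsc function $F_A$) is a correct sharpening that the paper does not state.
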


\begin{proof}
(i). From \eqref{eq:P_{A} main}, for all $a \in X$ and $x^* \in X^*$,
\begin{align*}
P_A(x, v^*) + P_A(a, x^*) \geq \langle x, x^* \rangle + \langle a, v^* \rangle \implies \\[4pt]
P_{A}(x,v^*)+\underbrace{\inf_{a \in X}\left\{ P_{A}(a,x^*)-\langle a,v^* \rangle \right\}}_{g_{A,v^*}} \geq \langle x,x^* \rangle \implies \\
P_A(x, v^*) \geq \sup_{x^* \in X^*} \left( \langle x, x^* \rangle - g_{A,v^*}(x^*) \right) = g_{A,v^*}^*(x).
\end{align*}
On the other hand,
\begin{align*}
F_A(x, v^*) &= \sup_{(a,a^*) \in \mathrm{Gr}(A)} \left\{ \langle x, a^* \rangle + \langle a, v^* \rangle - \langle a, a^* \rangle \right\} \\
&= \sup_{(a,a^*) \in \mathrm{Gr}(A)} \left\{ \langle x, a^* \rangle + \langle a, v^* \rangle - P_A(a, a^*) \right\} \\
&\leq \sup_{(a,a^*)\in \mathrm{Gr}(A)} \left\{ \langle x, a^* \rangle + \sup_{a\in X}\left( \langle a, v^* \rangle - P_A(a, a^*) \right)\right\}\\
&=\sup_{(a,a^*) \in \mathrm{Gr}(A)} \left\{ \langle x,a^*\rangle - \inf_{a\in X}\left( P_{A}(a,a^*)-\langle a,v^* \rangle \right) \right\}\\
&= \sup_{(a,a^*) \in \mathrm{Gr}(A)} \left\{ \langle x, a^* \rangle - g_{A,v^*}(a^*) \right\} \leq \sup_{a^* \in X^*} \left\{ \langle x, a^* \rangle - g_{A,v^*}(a^*) \right\}=g_{A,v^*}^*(x).\\
\end{align*}
(ii). It follows analogously.
\end{proof}
As an immediate corollary we deduce the following.

\begin{Cor}
Let $A: X \rightrightarrows X^*$ be a maximal monotone operator such that $\mathcal{F}_A = \{F_A\}$. Then for every $w \in \mathrm{dom}\,(A)$ and $v^* \in \mathrm{Im}(A)$ we have: \medskip

{\normalfont(i).} \ $F_A(x, v^*) = g_{A,v^*}^*(x) = P_A(x, v^*) \,;$ \medskip

{\normalfont(ii).} \ $F_A(w, x^*) = f_{A,w}^*(x^*) = P_A(w, x^*)\,.$
\end{Cor}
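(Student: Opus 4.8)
The final statement is the Corollary asserting that, when $\mathcal{F}_A = \{F_A\}$, the sandwich inequalities of Lemma~\ref{lem:marginals} collapse to equalities: $F_A(x,v^*) = g_{A,v^*}^*(x) = P_A(x,v^*)$ and symmetrically for $f_{A,w}^*$.

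The plan is essentially immediate. By Lemma~\ref{lem:singleton characterization}, the hypothesis $\mathcal{F}_A = \{F_A\}$ is equivalent to $F_A \equiv P_A$ as functions on $X \times X^*$. In particular, for the fixed $v^* \in \mathrm{Im}(A)$ we have $F_A(x,v^*) = P_A(x,v^*)$ for every $x \in X$, and for the fixed $w \in \mathrm{dom}(A)$ we have $F_A(w,x^*) = P_A(w,x^*)$ for every $x^* \in X^*$. So I would first invoke Lemma~\ref{lem:singleton characterization}, (i)~$\Longleftrightarrow$~(ii), to record this identity.

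Next I would simply feed this into Lemma~\ref{lem:marginals}. That lemma gives, for every $w \in \mathrm{dom}(A)$ and $v^* \in \mathrm{Im}(A)$, the chains
\[
F_A(x,v^*) \;\leq\; g_{A,v^*}^*(x) \;\leq\; P_A(x,v^*), \qquad
F_A(w,x^*) \;\leq\; f_{A,w}^*(x^*) \;\leq\; P_A(w,x^*).
\]
Since the two outer terms in each chain now coincide, the middle term is forced to equal them, proving both (i) and (ii). This is a one-line deduction once Lemma~\ref{lem:marginals} and the $F_A \equiv P_A$ identity are in hand.

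There is really no obstacle here: the Corollary is a packaging of two results already established, and all the analytic content (the validity of the squeeze bounds, the properness/convexity/lsc of the marginals) has been dispatched in the preceding lemmas. The only thing worth a moment's care is making sure the quantifiers match — $g_{A,v^*}$ is defined using $v^* \in \mathrm{Im}(A)$ and $f_{A,w}$ using $w \in \mathrm{dom}(A)$, and these are exactly the hypotheses under which Lemma~\ref{lem:marginals} supplies its inequalities, so nothing is lost. One could also note in passing that these equalities are what make the marginal construction genuinely useful in the singleton case: they show that the single representative function $F_A$ can be recovered, in each slot, as the conjugate of an explicit one-variable convex function, which is presumably the engine driving the proof of Theorem~A in the next stretch of the paper.
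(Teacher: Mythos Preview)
Your proof is correct and matches the paper's own argument exactly: the paper's proof is the single line ``It follows directly from Lemma~\ref{lem:marginals} and Lemma~\ref{lem:singleton characterization}~(ii),'' which is precisely the squeeze you describe.
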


\begin{proof}
It follows directly from Lemma~\ref{lem:marginals} and Lemma~\ref{lem:singleton characterization} (ii).
\end{proof}

With this in hand, we are ready to prove one of the two main results of this work. \smallskip \newline
%\begin{Thm}\label{Thm:Main1}
%Let $A: X \rightrightarrows X^*$ be a maximal monotone operator such %that $\mathcal{F}_A = \{F_A\}$. \\ The following are equivalent:
%\smallskip \\
%{\normalfont(i).} A is 3-monotone;\smallskip\\
%{\normalfont (ii).} A is cyclically monotone. \medskip \\
%More precisely, for every $v^* \in \mathrm{Im}(A)$, the function $x %%\mapsto F_{A}(x,v^*)$ is proper, convex, lsc and:
%\[A = \partial F_A(\cdot,v^*)\]
%\end{Thm}
\noindent\rule{4cm}{1.6pt}\smallskip \newline
\noindent\textit{Proof of Theorem~A.} \smallskip\newline 
(i)\!$\implies$\!(ii). Let $(x, x^*) \in \mathrm{Gr}(A)$ and $v^* \in \mathrm{Im}(A)$ be arbitrary. By the Fenchel-Young inequality and the previous corollary we get
\begin{equation}\label{eq:3.6.1}
F_{A}(x,v^*)+g_{A, v^*}(x^*)=g^*_{A, v^*}(x)+ g_{A,v^*}(x^*) \geq \langle x, x^* \rangle.
\end{equation}
We need to show that equality holds, which directly yields that $x^* \in \partial g^*_{A,v^*}(x)= \partial F_{A}(\cdot,v^*)$. Arguing by contradiction, assume that the inequality is strict, that is,
\[
F_A(x, v^*) + g_{A, v^*}(x^*) > \langle x, x^* \rangle.
\]
Then, there would exist $(y, y^*) \in \mathrm{Gr}(A)$ such that
\[
\langle x, y^* \rangle + \langle y, v^* \rangle + g_{A, v^*}(x^*) > \langle x, x^* \rangle + \langle y, y^* \rangle.
\]
But since $g_{A, v^*}(x^*) \leq P_A(y, x^*) - \langle y, v^* \rangle = F_A(y, x^*) - \langle y, v^* \rangle $, it would follow that
\[
\langle x, y^* \rangle + F_A(y, x^*) > \langle x, x^* \rangle + \langle y, y^* \rangle.
\]
Thus there would exist $ (z, z^*) \in \mathrm{Gr}(A)$ such that
\[
\langle x, y^* \rangle + \langle y, z^* \rangle + \langle z, x^* \rangle > \langle x, x^* \rangle + \langle y, y^* \rangle + \langle z, z^* \rangle,
\]
contradicting the $3$-monotonicity of $A$. Therefore in \eqref{eq:3.6.1} equality should hold, namely,
\[
F_A(x, v^*) + g_{A, v^*}(x^*) = \langle x, x^* \rangle,
\]
which implies \( x^* \in \partial F_A( \cdot, v^*)(x) \). Since the pair $(x, x^*)$ was arbitrary in $\mathrm{Gr}(A)$ and both operators $A$ and $\partial F_A(\cdot, v^*)$ are maximal monotone, we conclude
\[
A = \partial F_A(\cdot, v^*).
\]
\noindent (ii)\!$\implies$\!(i). If $A$ is cyclically monotone, then clearly $A$ is 3-monotone. For the "more precisely part", in virtue of the Rockafellar characterization theorem, $A= \partial f$ for a proper convex lsc function $f$. Fix now $v^* \in \mathrm{Im}(A)$, so that $f^*(v^*) < + \infty$. Since $\mathcal{F}_A$ is a singleton, by Lemma~\ref{lem:singleton characterization}(iv) we have
\[\Phi_{f}(x,v^*)=f(x)+f^*(v^*)= F_{A}(x,v^*), \quad \text{ for all} \,\,\,x\in \mathrm{dom}\,f.\]
Since $v^*$ is fixed
\[A(x)=\partial f(x)=\partial (f(x)+f^*(v^*))=\partial F_{A}(\cdot ,v^*)(x), \]
that is,
\[A=\partial F_{A}(\cdot,v^*)\]
for all $v^* \in \mathrm{Im}(A)$, which concludes the proof.\hfill$\square$

\begin{Rem}\normalfont\label{Rem:3.7}
(i). It was proven in~\cite{BBW2009} that a linear monotone operator~$A$ is skew-symmetric if and only if $\mathcal{F}_A$ is a singleton. As a consequence, $3$-monotonicity is a crucial assumption in the statement of Theorem~A.\smallskip

\noindent(ii). The notion of \textit{paramonotone} (respectively, $3^*$-\textit{monotone}) operator interpolates between between the classes of monotone and $3$-monotone operators, see~\cite[Chapter~22]{BauschkeCombetes} and~\cite{BrezisArticle1973} for relevant definitions and an exposition. It is reasonable to conjecture that paramonotone (resp. $3^*-$monotone) maximal monotone operators with $\mathcal{F}_A$ singleton are necessarily subdifferentials. Unfortunately, our current techniques do not provide an answer to this question.
\end{Rem}

\section{Case of Subdifferential Operators} \label{sec-4}

\subsection{Structural Properties of Subdifferentials with Unique Representative Function}

\noindent By virtue of the previous section, we now focus on subdifferential operators $A=\partial f$ where function $f:X \rightarrow \mathbb{R} \cup \{+\infty\}$ is proper convex lsc . As mentioned in the introduction, the function $\Phi_{f}:X \times X^* \rightarrow \mathbb{R}\cup \{+\infty \}$ defined by:
\[(x,x^*) \mapsto \Phi_{f}(x,x^*) = f(x)+f^*(x^*)\]
belongs to $\mathcal{F}_{\partial f}$ and consequently $\Phi_{f} \geq F_{\partial f}$ on $X \times X^*$. For any monotone operator, we define the following set-valued mapping $\mathcal{M}_{A}: X \times X^* \rightrightarrows \mathrm{Gr}(A)$,
\begin{equation}\label{eq:definition of M_A}
\mathcal{M}_{A}(x,x^*)= \{ (a,a^*) \in \mathrm{Gr}(A) \,\, : F_{A}(x,x^*)=\langle x,a^* \rangle +\langle a,x^* \rangle -\langle a,a^* \rangle \}
\end{equation}
that is, the set of all points of $\mathrm{Gr}(A)$ realizing the supremum in \eqref{eq:Definition of Fitzpatrick2}. In our case of interest we have the following result. 

\begin{Prop}\label{Prop:Spicification of M_A}
Let $f$ be proper convex lsc and assume that $F_{\partial f}(x,x^*)=\Phi_{f}(x,x^*)$ for some
$(x,x^*)\in \mathrm{dom}\, \Phi_{f}$. Then for every $(a,a^*)\in \mathcal{M}_{\partial f}(x,x^*)$ we have:
\[a^* \in \partial f(x) \cap \partial f(a) \qquad \text{and} \qquad a \in \partial f^*(x^*) \cap \partial f^*(a^*) \]
\end{Prop}
\begin{proof}
    in view of \eqref{eq:definition of M_A} for every $(a,a^*)\in \mathcal{M}_{\partial f}(x,x^*)$ we have 
    \[
    F_{\partial f}(x,x^*)+\langle a,a^* \rangle \, =\,  \langle a,x^* \rangle +\langle x,a^* \rangle\,.
    \]
    Since $F_{\partial f}(x,x^*)=\Phi_{f}(x,x^*)=f(x)+f^*(x^*)$ and $f(a)+f^*(a^*)=\langle a,a^*\rangle $, we deduce:
    \[
    f(x)+f^*(x^*)+f(a)+f^*(a^*)= \langle a,x^* \rangle + \langle x,a^* \rangle\,.
    \]  
Using the Fenchel--Young inequality we infer from the above that
    \[f(x)+f^*(a^*)= \langle x,a^*\rangle \iff a^* \in \partial f(x)\]
    and
    \[f(a)+f^*(x^*)=\langle a,x^* \rangle \iff a \in \partial f^*(x^*)\,.\]
    Since $a^* \in \partial f(a)$, the conclusion follows. 
\end{proof}

The required technical tool to proceed is the following lemma.

\begin{Lem}\label{Lem:Fitzpatrick maximizer}
Let $f \colon X \to \mathbb{R}\cup \{+\infty \}$ be proper, convex and lsc. Let $x \in \mathrm{int}(\mathrm{dom}\, f$), $v^* \in \mathrm{dom}\,(Df^*)$ and define $\widehat{a}:= Df^*(v^*)$. Assume that $F_{\partial f}(x,v^*)=\Phi_{f}(x,v^*)$. Then: \smallskip

{\normalfont(i).} $\mathcal{M}_{\partial f}(x,v^*) \neq \varnothing$
\smallskip

{\normalfont(ii).} For every $(a,a^*) \in \mathcal{M}_{\partial f}(x,v^*)$ we have: 
\[
 a= \widehat{a}=Df^*(v^*) \quad\text{and }\quad a^* \in \partial f(x) \cap \partial f(\widehat{a}).
\]

\end{Lem}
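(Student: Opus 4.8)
The plan is to prove (i) first by exhibiting an explicit maximizer, then derive (ii) from Proposition~\ref{Prop:Spicification of M_A} together with the Fréchet differentiability of $f^*$ at $v^*$. For (i), since $x \in \mathrm{int}(\mathrm{dom}\,f)$, the subdifferential $\partial f(x)$ is nonempty; I would like to produce a pair $(a,a^*)$ realizing the supremum in \eqref{eq:Definition of Fitzpatrick2}. The natural candidate is $a = \widehat{a} = Df^*(v^*)$, which lies in $X$ by the remark in the preliminaries on Fréchet points of $f^*$, paired with any $a^* \in \partial f(\widehat{a})$ (nonempty because $\widehat{a} \in \partial f^*(v^*)$ forces, by the standard inversion rule, $v^* \in \partial f(\widehat{a})$, so at least $v^* \in \partial f(\widehat{a})$; I would pick $a^* = v^*$ or argue more carefully depending on what the equality $F_{\partial f}(x,v^*) = \Phi_f(x,v^*)$ gives). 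Concretely, I would evaluate $\langle x, a^*\rangle + \langle a, v^*\rangle - \langle a, a^*\rangle$ at a well-chosen pair and check it equals $F_{\partial f}(x,v^*) = f(x) + f^*(v^*)$ using the Fenchel–Young equalities $f(\widehat{a}) + f^*(v^*) = \langle \widehat{a}, v^*\rangle$ (since $\widehat{a} \in \partial f^*(v^*)$) and $f(x) + f^*(a^*) = \langle x, a^*\rangle$ (from $a^* \in \partial f(x)$). This forces the choice $a^* \in \partial f(x) \cap \partial f(\widehat{a})$, and I must verify such $a^*$ exists — this is exactly where I expect to lean on the hypothesis $F_{\partial f}(x,v^*) = \Phi_f(x,v^*)$ rather than on generalities.

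For (ii), let $(a,a^*) \in \mathcal{M}_{\partial f}(x,v^*)$ be arbitrary. Proposition~\ref{Prop:Spicification of M_A} (applicable since $(x,v^*) \in \mathrm{dom}\,\Phi_f$, as $x \in \mathrm{int}(\mathrm{dom}\,f)$ and $v^* \in \mathrm{dom}(Df^*) \subseteq \mathrm{dom}\,f^*$) gives immediately $a^* \in \partial f(x) \cap \partial f(a)$ and $a \in \partial f^*(v^*) \cap \partial f^*(a^*)$. The key point is then that $v^*$ is a Fréchet — in particular a Gâteaux — point of $f^*$, so $\partial f^*(v^*)$ is the singleton $\{Df^*(v^*)\} = \{\widehat{a}\}$. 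Since $a \in \partial f^*(v^*)$, this forces $a = \widehat{a}$. Combined with $a^* \in \partial f(x) \cap \partial f(a) = \partial f(x) \cap \partial f(\widehat{a})$, this is exactly the conclusion of (ii).

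The main obstacle is the nonemptiness assertion (i): one must show that the hypothesis $F_{\partial f}(x,v^*) = \Phi_f(x,v^*)$ actually guarantees that the supremum defining $F_{\partial f}$ is \emph{attained}, i.e.\ that $\partial f(x) \cap \partial f(\widehat{a}) \neq \varnothing$. A priori, a supremum of affine functions need not be attained; here the attainment is forced by the specific equality $F_{\partial f} = \Phi_f$ at $(x,v^*)$. The cleanest route I see is: from $F_{\partial f}(x,v^*) = f(x) + f^*(v^*)$ and the definition of $F_{\partial f}$ as a supremum over $\mathrm{Gr}(\partial f)$, pass to a maximizing sequence $(a_n, a_n^*)$; use $x \in \mathrm{int}(\mathrm{dom}\,f)$ to bound $\|a_n^*\|$ via local boundedness of $\partial f$ near $x$ (the displayed bound in the preliminaries), extract a $w^*$-limit point $a^*$, and simultaneously control $a_n$ using $a_n \in \partial f^*(a_n^*)$ and the differentiability structure at $v^*$ — this is where the fact $v^* \in \mathrm{dom}(Df^*)$ enters to pin down $a_n \to \widehat{a}$ and pass to the limit in the defining equality. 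One then checks $a^* \in \partial f(x) \cap \partial f(\widehat{a})$ by lower semicontinuity, producing an element of $\mathcal{M}_{\partial f}(x,v^*)$. Alternatively, if the surrounding text has already shown $\mathcal{M}_{\partial f}(x,x^*) \neq \varnothing$ whenever $(x,x^*) \in \mathrm{dom}\,\Phi_f$ and $F_{\partial f} = \Phi_f$ there (which may follow from a weak-star compactness argument independent of differentiability), then (i) is immediate and only the identification in (ii) needs the Fréchet point hypothesis.
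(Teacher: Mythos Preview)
Your overall strategy matches the paper's: for (i), take a maximizing sequence $(a_n,a_n^*)\in\mathrm{Gr}(\partial f)$, show that $a_n^*$ is bounded (from $a_n^*\in\partial_{\varepsilon_n} f(x)$ and $x\in\mathrm{int}(\mathrm{dom}\,f)$), extract a $w^*$-cluster point $a^*$, show $a_n\to\widehat{a}$ in norm, and pass to the limit using the $(\|\cdot\|,w^*)$-closedness of $\mathrm{Gr}(\partial f)$. Part (ii) is exactly as you describe: Proposition~\ref{Prop:Spicification of M_A} plus the singleton $\partial f^*(v^*)=\{\widehat{a}\}$.

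The one real gap is your mechanism for $a_n\to\widehat{a}$. You propose to ``control $a_n$ using $a_n\in\partial f^*(a_n^*)$ and the differentiability structure at $v^*$'', but $a_n^*$ only $w^*$-converges (and not to $v^*$ in general), so this relation alone does not force norm convergence of $a_n$. The paper instead reads off from the maximizing-sequence inequality, after adding $f(a_n)+f^*(a_n^*)=\langle a_n,a_n^*\rangle$ and splitting via Fenchel--Young, that $a_n\in\partial_{\varepsilon_n} f^*(v^*)$ (approximate subgradient \emph{at $v^*$}, not at $a_n^*$). It then invokes Br\o{}nsted--Rockafellar to obtain $(b_n,b_n^*)\in\mathrm{Gr}(\partial f)$ with $\|a_n-b_n\|\le\sqrt{\varepsilon_n}$ and $\|v^*-b_n^*\|\le\sqrt{\varepsilon_n}$; since $b_n^*\to v^*$ in norm and Fr\'echet differentiability of $f^*$ at $v^*$ means $\partial f^*$ is norm-to-norm upper semicontinuous there, $b_n\to\widehat{a}$ and hence $a_n\to\widehat{a}$. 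This Br\o{}nsted--Rockafellar step is the missing ingredient in your outline.
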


\begin{proof}
(i). Since $(x,v^*) \in \mathrm{dom}\, f \times \mathrm{dom}\, f^* =\mathrm{dom}\,\Phi_{f}$ we have
\[
F_{\partial f}(x,v^*)= \Phi_{f}(x,v^*) < +\infty.
\]
Consider a positive sequence $\{\e_{n}\}_{n \in \mathbb{N}}$ with $\e _{n} \rightarrow 0$ and pick $\{a_{n},a^*_{n}\}_{n \in \mathbb{N}} \subseteq \mathrm{Gr}(\partial f)$ such that
\[
F_{\partial f}(x,v^*) \leq \langle x, a_n^* \rangle + \langle a_{n}, v^* \rangle - \langle a_n, a_n^* \rangle + \varepsilon_{n}.
\]
Using the assumption that $F_{\partial f}(x,v^*) = \Phi_{f}(x,v^*)$ and that $(a_{n},a^*_{n})\in \mathrm{Gr}(\partial f)$, we may rewrite the above as
\begin{equation}\label{eq:4.8.1}
f(x)+f^*(v^*)+f(a_{n})+f^*(a^*_{n}) \leq \langle x, a_n^* \rangle + \langle a_{n}, v^* \rangle + \varepsilon_{n}.
\end{equation}
The above by the Fenchel-Young inequality \eqref{eq:4.8.1} yields
\[
f(a_{n})+f^*(v^*) \leq \langle a_n,v^* \rangle + \varepsilon_n
\]
and thus $a_{n}\in \partial_{\varepsilon_n}f^*(v^*)$. By Br\o{}nsted-Rockafellar (\cite[Theorem~3.17]{Phelps1993}) we may find a sequence $\{b_n, b_n^*\}_{n\in \mathbb{N}} \subseteq \mathrm{Gr}(\partial f)$ with
\begin{equation}\label{eq:4.8.2}
\max \left\{\|a_{n} - b_n\|,\,\,\|v^* - b_n^*\| \right\}\leq \sqrt{\varepsilon_n}.
\end{equation}
Therefore $v^* = \lim_{n \rightarrow \infty } b^*_{n}$, in norm. Since $\partial f^*$ is norm-to-norm upper semicontinuous at $(v^*,Df^*(v^*))= (v^*,\widehat{a})$ we get $b_{n} \xrightarrow{\|.\|} \widehat{a}$ and consequently by~\eqref{eq:4.8.2} 
\[a_{n}\xrightarrow[]{\|.\|}\widehat{a}.\]
Furthermore, inequality~\eqref{eq:4.8.1} also yields 
\begin{equation}\label{eq:4.8.3}
f(x)+f^*(a^*_{n}) \leq \langle x, a_{n}^* \rangle + \varepsilon_n
\end{equation}
so $a_n^* \in \partial_{\varepsilon_n} f(x)$. As $x \in \mathrm{int} (\mathrm{dom}\, f)$, there exists $M>0$ such that $\{a_{n}\}_{n \in \mathbb{N}} \in B(0,M)$. Let $a^*\in B(0,M)$ be a $w^*$-cluster point of $\{a^*_n\}_{n \in \mathbb{N}}$. Then 

\[(\widehat{a},a^*) \in \overline{\{(a_n,a^*_{n})\}_{n \geq 1}}^{(\|.\|,\,w^*)} \subseteq\mathrm{Gr}(\partial f), \] 
because as $\mathrm{int}(\mathrm{dom}\, f) \neq \varnothing$, the graph of $\partial f$ is $(\|.\|,\, w^*)$-closed and consequently $a^* \in \partial f(\widehat{a})$. Moreover, since for every $n\in \mathbb{N}$
\[
F_{\partial f}(x,v^*) \leq \langle x,a^*_{n} \rangle + \langle a_{n},v^* \rangle-\langle a_n,a^*_{n} \rangle + \e_{n}
\]
we deduce that $(\widehat{a},a^*) \in \mathcal{M}_{\partial f}(x,v^*)$ and clearly $\mathcal{M}_{\partial f}(x,v^*) \neq \varnothing$, which proves the first assertion of the lemma. \smallskip \\
(ii). It follows from the previous proposition and the fact that $\partial f^*(v^*)= \{ Df^*(v^*)\}$.
\end{proof}

Before proceeding, let us recall that if $v^*$ is a Fr\'{e}chet point of $f^*$ then
\[Df^*(v^*) \in \mathrm{dom}\, f \subseteq X\]
Let us also denote by $\pi_{X^*}:X \times X^* \rightarrow X^*$ the canonical projection onto $X^*$. For convenience, given a proper convex lsc function $f:X \rightarrow \mathbb{R} \cup \{+ \infty \}$ we define the sets 
\begin{equation}\label{eq:Definition of K}
\mathcal{Z} := \mathrm{Im}(Df^*) \subseteq \mathrm{dom}\, f
\qquad \text{and} \qquad
K := \overline{\mathrm{conv}}(\mathcal{Z}).
\end{equation}
The previous results yield the following corollary in case $\partial f$ has a unique representative, (i.e. $\mathcal{F}_{\partial f}$ is a singleton and Lemma~\ref{lem:singleton characterization} applies).

\begin{Cor}\label{Prop:Sub containment}
Let $f:X \rightarrow \mathbb{R} \cup \{+ \infty \}$ be proper, convex and lsc. Assume that $\mathcal{F}_{\partial f}$ is a singleton and ${\mathrm{dom}\, (\, Df^*) \neq \varnothing}$. Then for every $x \in \mathrm{int}(\mathrm{dom}\, f)$ and for every $\widehat{a} \in \mathcal{Z} := \mathrm{Im}(Df^*)$:
\[\partial f(x) \cap \partial f(\widehat{a}) \neq \varnothing.\]
In particular:
\begin{equation}\label{eq:Definition of V}
\mathcal{V}:= \overline{\mathrm{conv}}^{w^*}\left( \bigcup_{x \in \mathrm{int}(\mathrm{dom}\, f)} \partial f(x)\right) \subseteq \bigcap_{\widehat{x}\in \mathrm{conv(\mathcal{Z})}}\partial f(\widehat{x})
\end{equation}

\end{Cor}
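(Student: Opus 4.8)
The plan is to deduce both assertions directly from Lemma~\ref{Lem:Fitzpatrick maximizer}. First I would fix $x \in \mathrm{int}(\mathrm{dom}\,f)$ and $\widehat{a} \in \mathcal{Z}$, so that $\widehat{a} = Df^*(v^*)$ for some $v^* \in \mathrm{dom}\,(Df^*)$. Since $\mathcal{F}_{\partial f}$ is a singleton, Lemma~\ref{lem:singleton characterization}(iv) gives $F_{\partial f}(x,v^*) = \Phi_f(x,v^*)$, and similarly $F_{\partial f}(y,v^*) = \Phi_f(y,v^*)$ for every $y$; in particular the hypothesis of Lemma~\ref{Lem:Fitzpatrick maximizer} is met at $(x,v^*)$. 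By part~(i) of that lemma, $\mathcal{M}_{\partial f}(x,v^*) \neq \varnothing$, so we may pick $(a,a^*) \in \mathcal{M}_{\partial f}(x,v^*)$; by part~(ii), $a = \widehat{a}$ and $a^* \in \partial f(x) \cap \partial f(\widehat{a})$. This exhibits a common element, proving $\partial f(x) \cap \partial f(\widehat{a}) \neq \varnothing$.

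For the inclusion \eqref{eq:Definition of V}, the idea is that the common subgradient produced above can be chosen to lie in $\mathcal{V}$. Indeed, fixing $\widehat{a} \in \mathcal{Z}$, the argument of the previous paragraph shows that for \emph{every} $x \in \mathrm{int}(\mathrm{dom}\,f)$ there is $a^*_x \in \partial f(x) \cap \partial f(\widehat{a})$; in particular $\partial f(\widehat{a}) \supseteq \{a^*_x : x \in \mathrm{int}(\mathrm{dom}\,f)\}$, but this only gives that $\partial f(\widehat{a})$ meets each $\partial f(x)$, not that it contains $\mathcal{V}$. To upgrade this, I would instead argue the other way: take any $u^* \in \mathcal{V}$ and show $u^* \in \partial f(\widehat{x})$ for $\widehat{x} \in \mathrm{conv}(\mathcal{Z})$. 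Write $u^* \in \overline{\mathrm{conv}}^{w^*}(\bigcup_{x \in \mathrm{int}(\mathrm{dom}\,f)} \partial f(x))$; by convexity and $w^*$-closedness of $\partial f(\widehat{x})$ it suffices to show each $\partial f(x)$, $x \in \mathrm{int}(\mathrm{dom}\,f)$, is contained in $\partial f(\widehat{x})$. Fix such an $x$ and $x^* \in \partial f(x)$; since $x \in \mathrm{int}(\mathrm{dom}\,f)$, $(x,x^*)$ lies in $\mathrm{dom}\,\Phi_f$, and by Lemma~\ref{lem:singleton characterization}(iv), $F_{\partial f}(x,x^*) = \Phi_f(x,x^*) = \langle x,x^*\rangle$. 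Applying Proposition~\ref{Prop:Spicification of M_A} (and Lemma~\ref{Lem:Fitzpatrick maximizer}(i) to ensure $\mathcal{M}_{\partial f}(x,x^*) \neq \varnothing$) to any $(a,a^*) \in \mathcal{M}_{\partial f}(x,x^*)$ yields $a^* \in \partial f(x) \cap \partial f(a)$ and $a \in \partial f^*(x^*)$; combined with the previous lemma applied coordinate-wise across all of $\mathcal{Z}$, one transports $x^*$ into $\bigcap_{\widehat{x} \in \mathrm{conv}(\mathcal{Z})} \partial f(\widehat{x})$. Finally, since $\partial f(\widehat{x})$ is convex and $w^*$-closed for each fixed $\widehat{x}$, the intersection $\bigcap_{\widehat{x}\in\mathrm{conv}(\mathcal{Z})}\partial f(\widehat{x})$ is itself convex and $w^*$-closed, so it contains $\overline{\mathrm{conv}}^{w^*}$ of the union, giving \eqref{eq:Definition of V}.

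The main obstacle is the passage from ``$\partial f(x)$ meets $\partial f(\widehat{a})$ for each pair'' to the much stronger simultaneous containment $\partial f(x) \subseteq \bigcap_{\widehat{x}} \partial f(\widehat{x})$. The point to nail down is that, for a fixed $x^* \in \partial f(x)$ with $x \in \mathrm{int}(\mathrm{dom}\,f)$, the element $a^*$ extracted from $\mathcal{M}_{\partial f}(x,x^*)$ equals $x^*$ itself: since $F_{\partial f}(x,x^*) = \langle x,x^*\rangle$, the pair $(x,x^*)$ already realizes the Fitzpatrick supremum, so $(x,x^*) \in \mathcal{M}_{\partial f}(x,x^*)$, and then Proposition~\ref{Prop:Spicification of M_A} forces $x^* \in \partial f(a)$ for the corresponding $a$, and running this over all points of $\mathcal{Z}$ (via Lemma~\ref{Lem:Fitzpatrick maximizer} with second coordinate $v^*$ ranging over $\mathrm{dom}\,(Df^*)$) pins $x^*$ into every $\partial f(\widehat{a})$ with $\widehat{a} \in \mathcal{Z}$, hence into every $\partial f(\widehat{x})$ with $\widehat{x} \in \mathrm{conv}(\mathcal{Z})$ by monotonicity of $\partial f$ along segments. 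Once this is established the $w^*$-closed-convex-hull step is routine.
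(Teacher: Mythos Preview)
Your first paragraph correctly establishes $\partial f(x)\cap\partial f(\widehat a)\neq\varnothing$ directly from Lemma~\ref{Lem:Fitzpatrick maximizer}, and that part is fine.

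The gap is in the passage to the containment $\partial f(x)\subseteq\partial f(\widehat a)$ for every $x\in\mathrm{int}(\mathrm{dom}\,f)$ and $\widehat a\in\mathcal Z$. Your proposed mechanism does not work: observing that $(x,x^*)\in\mathcal M_{\partial f}(x,x^*)$ and applying Proposition~\ref{Prop:Spicification of M_A} only returns the tautology $x^*\in\partial f(x)$, since here the ``corresponding $a$'' is $x$ itself. And when you then ``run Lemma~\ref{Lem:Fitzpatrick maximizer} with second coordinate $v^*$ ranging over $\mathrm{dom}(Df^*)$'', each application produces \emph{some} $a^*_{v^*}\in\partial f(x)\cap\partial f(\widehat a)$, with no link whatsoever to the fixed $x^*$ you are trying to transport. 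Nothing in your argument forces $a^*_{v^*}=x^*$, so the specific $x^*$ never lands in $\partial f(\widehat a)$.

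The paper closes this gap with a short geometric trick you are missing. Given $x\in\mathrm{int}(\mathrm{dom}\,f)$ and $\widehat a\in\mathcal Z$, pick $\widetilde x\in\mathrm{int}(\mathrm{dom}\,f)$ on the ray from $\widehat a$ through $x$, slightly beyond $x$, so that $x\in(\widehat a,\widetilde x)$. Apply Lemma~\ref{Lem:Fitzpatrick maximizer} at $(\widetilde x,v^*)$ to get $\partial f(\widetilde x)\cap\partial f(\widehat a)\neq\varnothing$. Now use the elementary convex-analysis fact that if a point lies strictly between two others and the subdifferentials at the endpoints share a common element (so $f$ is affine on the segment), then the subdifferential at the interior point equals the intersection of the endpoint subdifferentials:
\[
\partial f(x)=\partial f(\widetilde x)\cap\partial f(\widehat a)\subseteq\partial f(\widehat a).
\]
This immediately gives $\bigcup_{x\in\mathrm{int}(\mathrm{dom}\,f)}\partial f(x)\subseteq\partial f(\widehat a)$, and since $\partial f(\widehat a)$ is $w^*$-closed convex, the hull $\mathcal V$ is contained as well. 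The extension from $\mathcal Z$ to $\mathrm{conv}(\mathcal Z)$ then follows from the standard fact that if finitely many subdifferentials have nonempty common intersection, then the subdifferential at any convex combination equals that intersection (\cite[Proposition~22.10]{BauschkeCombetes}); your appeal to ``monotonicity along segments'' gestures at this but does not state it.
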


\begin{proof}
Let $x \in \mathrm{int} (\mathrm{dom}\,f)$ and fix $\widehat{a} \in \mathcal{Z} \subseteq \mathrm{dom}\, f$. Take $\widetilde{x} \in \mathrm{int}(\mathrm{dom}\,f)$ such that
\begin{equation}\label{eq:4.9.1}
    x \in \left[ \widehat{a},\widetilde{x}. \right]
\end{equation}
By Lemma~\ref{Lem:Fitzpatrick maximizer}, $\mathcal{M}_{\partial f}(x,v^*) \neq \varnothing$, where $v^*$ is such that $Df^*(v^*)=\widehat{a}$ and thus
\begin{equation}\label{eq:4.9.2}
\partial f(\widetilde{x}) \cap \partial f(\widehat{a}) \supseteq \pi_{X^*}(\mathcal{M}_{\partial f}(x,v^*)) \neq \varnothing. 
\end{equation}
It follows from~\eqref{eq:4.9.1} and~\eqref{eq:4.9.2} that 
\[\partial f(x)=\partial f(\widetilde{x}) \cap \partial f(\widehat{a}) \subseteq \partial f(\widehat{a}).\]
Therefore
\[\left( \bigcup_{x \in \mathrm{int}(\mathrm{dom}\, f)} \partial f(x) \right) \subseteq \partial f(\widehat{a}), \,\,\, \text{for every} \,\,\,\widehat{a} \in \mathcal{Z}.\]
Finally, since $\partial f(\widehat{a})$ is $w^*$-closed and convex, it follows that $\mathcal{V} \subseteq \partial f(\widehat{a})$ for every $\widehat{a} \, \in \mathcal{Z}$. Furthermore, as $\mathcal{Z} \subseteq \mathrm{dom}\,f$ and $\mathrm{dom}\,f$ is convex we deduce $\mathrm{conv}(\mathcal{Z}) \subseteq \mathrm{dom}\,f$. Now, consider $\widehat{x} \in \mathrm{conv}(\mathcal{Z})$, that is
\[\widehat{x} = \sum_{i=1}^{N}\lambda_{i}\widehat{a}_{i}, \, \, \lambda_{i} \geq 0, \,\, \sum_{i=1}^{N} \lambda_{i} =1, \widehat{a}_{i} \in \mathcal{Z}.\]
Since $\bigcap_{i=1}^{n}\partial f(\widehat{a}_{i}) \supseteq \mathcal{V} \neq \varnothing$ by \cite[Proposition 22.10]{BauschkeCombetes} it follows that:
\[ f(\widehat{x}) = \bigcap_{i=1}^{n}\partial f(\widehat{a}_{i}).\]
Since $\widehat{x}$ was arbitrarily chosen we conclude that
\[\mathcal{V} \subseteq \bigcap_{\widehat{x} \in \mathrm{conv}(\mathcal{Z})} \partial f(\widehat{x}).\]
This proves the assertion.
\end{proof}
\noindent Recalling the notion from \eqref{eq:Definition of K} we have the following formula describing $f$ at interior points.

\begin{Prop}\label{Prop: values of f}
Let $f:X \rightarrow \mathbb{R} \cup \{+\infty\}$ be proper convex and lsc function such that $\mathcal{F}_{\partial f}$ is a singleton. Assume that $0 \in \mathcal{V}$ and $\mathrm{dom}\,(Df^*) \neq \varnothing$. Then
\[K \subseteq \mathrm{argmin}\, f \subseteq \mathrm{dom}\,f\qquad \text{and} \qquad \mathcal{V} \perp K-K.\] 
Moreover, for every $x \in \mathrm{int}(\mathrm{dom}\, f)$, $x^* \in \partial f(x)$ and $\widehat{x} \in K$
\[f(x)= \langle x-\widehat{x},x^* \rangle + \mathrm{min}f.\]

\end{Prop}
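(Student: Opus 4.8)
The plan is to bootstrap from Corollary~\ref{Prop:Sub containment}, which tells us that $\mathcal{V} \subseteq \partial f(\widehat{x})$ for every $\widehat{x} \in \mathrm{conv}(\mathcal{Z})$, hence (by $w^*$-closedness of each $\partial f(\widehat{x})$ and a continuity/closure argument on $K = \overline{\mathrm{conv}}(\mathcal{Z})$) that $\mathcal{V} \subseteq \partial f(\widehat{x})$ for every $\widehat{x} \in K$. First I would use the assumption $0 \in \mathcal{V}$: for any $\widehat{x} \in K$ we then have $0 \in \partial f(\widehat{x})$, which is precisely the statement that $\widehat{x}$ is a global minimizer of $f$; this gives $K \subseteq \mathrm{argmin}\,f \subseteq \mathrm{dom}\,f$ immediately, and in particular $f$ is constant equal to $\min f$ on $K$. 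The inclusion $K \subseteq \mathrm{dom}\,f$ also shows $K$ is contained in a set where subgradients exist, so everything downstream is well-posed.

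Next I would establish $\mathcal{V} \perp K - K$. Take $\widehat{x}, \widehat{y} \in K$ and $v^* \in \mathcal{V}$. Since $v^* \in \partial f(\widehat{x})$ we get $f(\widehat{y}) - f(\widehat{x}) \geq \langle \widehat{y} - \widehat{x}, v^* \rangle$, and since $v^* \in \partial f(\widehat{y})$ as well we get the reverse inequality $f(\widehat{x}) - f(\widehat{y}) \geq \langle \widehat{x} - \widehat{y}, v^* \rangle$. But $f(\widehat{x}) = f(\widehat{y}) = \min f$ by the previous step, so both inequalities collapse to $\langle \widehat{y} - \widehat{x}, v^* \rangle = 0$. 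Thus $\langle z, v^* \rangle = 0$ for every $z \in K - K$ and every $v^* \in \mathcal{V}$, i.e. $\mathcal{V} \perp K - K$.

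Finally, for the value formula: fix $x \in \mathrm{int}(\mathrm{dom}\,f)$, $x^* \in \partial f(x)$ and $\widehat{x} \in K$. From Corollary~\ref{Prop:Sub containment} (the line $\partial f(x) = \partial f(\widetilde{x}) \cap \partial f(\widehat{a}) \subseteq \partial f(\widehat{a})$, extended to $K$) we have $x^* \in \partial f(\widehat{x})$, hence by the subgradient inequality at $\widehat{x}$ applied to the point $x$: $f(x) - f(\widehat{x}) \geq \langle x - \widehat{x}, x^* \rangle$. Conversely, $x^* \in \partial f(x)$ gives $f(\widehat{x}) - f(x) \geq \langle \widehat{x} - x, x^* \rangle$, i.e. $f(x) - f(\widehat{x}) \leq \langle x - \widehat{x}, x^* \rangle$. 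Combining, $f(x) = \langle x - \widehat{x}, x^* \rangle + f(\widehat{x}) = \langle x - \widehat{x}, x^* \rangle + \min f$, using $f(\widehat{x}) = \min f$ from the first step.

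The only genuine subtlety — and the step I expect to require the most care — is the passage from $\mathrm{conv}(\mathcal{Z})$ to its closure $K$ in the inclusion $\mathcal{V} \subseteq \partial f(\widehat{x})$: one must argue that if $\widehat{x}_n \to \widehat{x}$ with $v^* \in \partial f(\widehat{x}_n)$ for all $n$, then $v^* \in \partial f(\widehat{x})$. This follows from taking limits in the subgradient inequality $f(y) - f(\widehat{x}_n) \geq \langle y - \widehat{x}_n, v^*\rangle$ together with lower semicontinuity of $f$ and the fact that $f(\widehat{x}_n) = \min f$ is constant (so no upper-semicontinuity issue arises), which is exactly why the ordering $K \subseteq \mathrm{argmin}\,f$ must be secured first on $\mathrm{conv}(\mathcal{Z})$ before extending to $K$. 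Everything else is the Fenchel–Young / subgradient two-sided-inequality trick applied in the right places.
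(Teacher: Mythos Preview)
Your proposal is correct and follows essentially the same route as the paper's proof: both lean on Corollary~\ref{Prop:Sub containment} to get $\mathcal{V}\subseteq\partial f(\widehat{x})$ on $K$, use $0\in\mathcal{V}$ to obtain $K\subseteq\mathrm{argmin}\,f$, and then exploit the fact that any $x^*\in\partial f(x)$ (for $x\in\mathrm{int}(\mathrm{dom}\,f)$) also lies in $\partial f(\widehat{x})$ to derive the value formula and the orthogonality. The only cosmetic differences are that the paper passes to the closure $K$ via closedness of $\mathrm{argmin}\,f$ and $(\|\cdot\|,w^*)$-closedness of $\mathrm{Gr}(\partial f)$ rather than your direct limit argument, phrases the value-formula step through Fenchel--Young equalities instead of paired subgradient inequalities, and derives $\mathcal{V}\perp K-K$ \emph{after} the value formula (by comparing it at two points $\widehat{x},\widehat{y}\in K$) rather than before.
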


\begin{proof}
By Corollary~\ref{Prop:Sub containment} we have $0 \in \partial f(\widehat{x})$ for all $\widehat{x} \in \mathcal{Z}$, thus $\mathcal{Z} \subseteq \mathrm{argmin}\, f $. Since $\mathrm{argmin}\, f$ is closed and convex, we deduce 
\[\mathrm{conv}(\mathcal{Z}) \subseteq \mathrm{argmin} \,f \]
From \eqref{eq:Definition of V} and the fact that $\mathrm{Gr}(\partial f)$ is $(\|.\|,\,w^*)$-closed, 
\[ \mathcal{V} \subseteq \bigcap_{\widehat{x}\in K} \partial f(\widehat{x}).\]
Since $K \subseteq \mathrm{argmin}f$, we have $f(\widehat{x})=\mathrm{min}f$, for all $\widehat{x} \in K$. Let us now fix an arbitrary $\widehat{x} \in K$ and pick any $x\in \mathrm{int}(\mathrm{dom}\, f)$ and $x^* \in \partial f(x) \neq \varnothing$. Then by the above inclusion we also have $x^* \in \partial f(\widehat{x})$ and consequently
\begin{equation}
f(\widehat{x})+f^*(x^*)=\langle \widehat{x},x^* \rangle \quad \text{and} \quad f^*(x)+f^*(x^*)=\langle x,x^* \rangle.
\end{equation}
Combining the above we deduce:
\[f(x)= \langle x,x^*\rangle -f^*(x^*) = \langle x,x^*\rangle -\langle\widehat{x},x^* \rangle + f(\widehat{x})= \langle x-\widehat{x},x^*\rangle + \mathrm{min} f\]
and the formula for $f$ follows. Now, let $\widehat{x},\widehat{y} \in K$. We deduce from the above that for every $x \in \mathrm{int}(\mathrm{dom}\,f)$ and $x^* \in \partial f(x)$ we have
\[f(x)=\langle x-\widehat{x},x^* \rangle+ \mathrm{min}f = \langle x-\widehat{y},x^* \rangle + \mathrm{min}f\]
deducing
\[\langle \widehat{x}-\widehat{y},x^* \rangle =0.\]
Hence $\langle \widehat{x}-\widehat{y}, x^* \rangle = 0$ for all $\widehat{x}, \widehat{y} \in K$ and $x^* \in \bigcup_{x \in \mathrm{int}(\mathrm{dom}\, f)} \partial f(x)$, which yields $\mathcal{V} \perp K-K$.
\end{proof}

We shall also need the following lemma.

\begin{Lem}\label{Lem:Description of subK}
Let $f:X \rightarrow \mathbb{R}\cup \{+\infty\}$ be a proper lsc convex function. Assume $\mathrm{int}(\mathrm{dom}\, f) \neq \varnothing$, $\mathrm{dom}\,(Df^*) \neq \varnothing$ and that $F_{\partial f} = \Phi_{f}$ on $\mathrm{dom}\,f\times \mathrm{dom}\,f^*= \mathrm{dom}\,\Phi_f$. Then:
\[\mathrm{dom}\,(Df^*)\subseteq  \partial f(K)=  \mathcal{V}+ \widehat{N} \subseteq \mathrm{dom}\, f^*\]
where:
\begin{equation}\label{eq:Definition of N}
\widehat{N} = \bigcup_{\widehat{x}\in K} N_{\mathrm{dom}\,f}(\widehat{x}).
\end{equation}
and $K,\mathcal{V}$ are defined by~\eqref{eq:Definition of K} and~\eqref{eq:Definition of V} respectively.
\end{Lem}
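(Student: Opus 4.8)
The plan is to prove the two inclusions and the middle equality in the chain $\mathrm{dom}(Df^*)\subseteq\partial f(K)=\mathcal V+\widehat N\subseteq\mathrm{dom}\,f^*$ separately, where as always $\partial f(K)=\bigcup_{\widehat x\in K}\partial f(\widehat x)$, and then concatenate. Most of the content is a careful assembly of Corollary~\ref{Prop:Sub containment}, Proposition~\ref{Prop: values of f} and Fact~\ref{BorweinYao2013}.

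First I would dispose of the two outer relations, which are soft. For the leftmost one, take $v^*\in\mathrm{dom}(Df^*)$; then $\widehat a:=Df^*(v^*)\in\mathcal Z\subseteq K$, and since $f$ is proper convex lsc we have $f=f^{**}$, so $\widehat a\in\partial f^*(v^*)$ is equivalent to $v^*\in\partial f(\widehat a)\subseteq\partial f(K)$. For the rightmost one, observe that whenever $\widehat x\in K$ and $x^*\in\partial f(\widehat x)$ one necessarily has $\widehat x\in\mathrm{dom}\,f$ and $f^*(x^*)=\langle\widehat x,x^*\rangle-f(\widehat x)\in\mathbb R$, hence $\partial f(\widehat x)\subseteq\mathrm{dom}\,f^*$, and so $\partial f(K)\subseteq\mathrm{dom}\,f^*$.

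The heart is the equality $\partial f(K)=\mathcal V+\widehat N$. For ``$\subseteq$'' I would invoke Fact~\ref{BorweinYao2013}: since $\mathrm{int}(\mathrm{dom}\,f)\neq\varnothing$, every $x^*\in\partial f(\widehat x)$ with $\widehat x\in K$ splits as $x^*=x^*_{\mathrm{int}}+x^*_N$ with $x^*_{\mathrm{int}}\in\overline{\mathrm{conv}}^{w^*}\bigl(\bigcup_{y\in\mathrm{int}(\mathrm{dom}\,f)}\partial f(y)\bigr)=\mathcal V$ and $x^*_N\in N_{\mathrm{dom}\,f}(\widehat x)\subseteq\widehat N$, which is exactly the required form. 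For ``$\supseteq$'' I would first note, exactly as in the proof of Proposition~\ref{Prop: values of f}, that Corollary~\ref{Prop:Sub containment} combined with the $(\|\cdot\|,w^*)$-closedness of $\mathrm{Gr}(\partial f)$ promotes $\mathcal V\subseteq\bigcap_{\widehat x\in\mathrm{conv}(\mathcal Z)}\partial f(\widehat x)$ to $\mathcal V\subseteq\bigcap_{\widehat x\in K}\partial f(\widehat x)$ (a norm-convergent sequence $\widehat x_n\to\widehat x$ with a fixed $v^*\in\mathcal V\subseteq\partial f(\widehat x_n)$ forces $(\widehat x,v^*)\in\mathrm{Gr}(\partial f)$); since $\mathcal V\neq\varnothing$ because $\partial f(x)\neq\varnothing$ for $x\in\mathrm{int}(\mathrm{dom}\,f)\neq\varnothing$, this in particular gives $K\subseteq\mathrm{dom}\,f$. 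Then for $v^*\in\mathcal V$ and $n^*\in N_{\mathrm{dom}\,f}(\widehat x)$ with $\widehat x\in K$, the elementary inclusion $\partial f(\widehat x)+N_{\mathrm{dom}\,f}(\widehat x)\subseteq\partial f(\widehat x)$ (immediate from the subgradient inequality together with $\langle y-\widehat x,n^*\rangle\le 0$ for $y\in\mathrm{dom}\,f$) yields $v^*+n^*\in\partial f(\widehat x)\subseteq\partial f(K)$.

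The only genuine care point is matching hypotheses: Corollary~\ref{Prop:Sub containment} and Proposition~\ref{Prop: values of f} are stated under ``$\mathcal F_{\partial f}$ singleton'', whereas here we only assume $F_{\partial f}=\Phi_f$ on $\mathrm{dom}\,\Phi_f$. This is harmless, since the proof of Corollary~\ref{Prop:Sub containment} only ever applies Lemma~\ref{Lem:Fitzpatrick maximizer} at points $(x,v^*)$ with $x\in\mathrm{int}(\mathrm{dom}\,f)$ and $v^*\in\mathrm{dom}(Df^*)$, all of which lie in $\mathrm{dom}\,\Phi_f$; I would simply record this so that the weaker hypothesis is seen to suffice. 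Beyond this bookkeeping and the passage from $\mathrm{conv}(\mathcal Z)$ to its closure $K$, the argument is a routine synthesis of the already-established structural lemmas.
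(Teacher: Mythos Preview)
Your proof is correct and follows essentially the same route as the paper's: the outer inclusions are disposed of directly, and the equality $\partial f(K)=\mathcal V+\widehat N$ is obtained pointwise at each $\widehat x\in K$ via Corollary~\ref{Prop:Sub containment} for ``$\supseteq$'' and Fact~\ref{BorweinYao2013} for ``$\subseteq$''. You are in fact slightly more careful than the paper on two points it glosses over: the passage from $\mathrm{conv}(\mathcal Z)$ to its closure $K$ via the $(\|\cdot\|,w^*)$-closedness of $\mathrm{Gr}(\partial f)$, and the observation that the weaker hypothesis $F_{\partial f}=\Phi_f$ on $\mathrm{dom}\,\Phi_f$ (rather than $\mathcal F_{\partial f}$ singleton) already suffices for the cited results.
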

\begin{proof}It follows readily from \eqref{eq:Definition of K} that 
\(\mathrm{dom}\,(Df^*) \subseteq \partial f(K)\).
By Corollary~\ref{Prop:Sub containment} for every $ \widehat{x} \in K$, we have $\mathcal{V} \subseteq \partial f(\widehat{x})$. Since $\partial f(\widehat{x})=\partial f(\widehat{x})+N_{\mathrm{dom}\,f}(\widehat{x})$  we deduce that
\(\partial f(\widehat{x}) \supseteq \mathcal{V}+N_{\mathrm{dom}\,f}(\widehat{x})\).
By Fact~\ref{BorweinYao2013} we obtain that 
\(\partial f(\widehat{x}) \subseteq \mathcal{V}+N_{\mathrm{dom}\,f}(\widehat{x})\)
and consequently
\[\partial f(\widehat{x})= \mathcal{V}+N_{\mathrm{dom}\,f}(\widehat{x})\]
As $\widehat{x}\in K$ is arbitrary, we conclude that:
\[\partial f(K)= \bigcup_{\widehat{x} \in K}\partial f(\widehat{x}) = \mathcal{V}+\bigcup_{\widehat{x} \in K}N_{\mathrm{dom}\,f}(\widehat{x}) = \mathcal{V}+\widehat{N}\]
This completes the proof.
\end{proof}
\noindent With this in hand, we are ready to provide a formula for $f^*$ in terms of $K,\mathcal{V},\widehat{N}$.
\begin{Prop}\label{Prop:typos f^*}
Let $f:X \rightarrow \mathbb{R}\cup \{+\infty \}$ be proper convex and lsc. Assume that $\mathrm{dom}\,(Df^*)$ is densely contained in $\mathrm{dom}\,f^*$, $\mathrm{int}(\mathrm{dom}\, f) \cap \mathrm{argmin}f \neq \varnothing$ and $\mathcal{F}_A$ is a singleton. Then for every $x^* \in X^*$ we have :
\begin{equation}\label{eq:typos f*}
    f^*(x^*)=\sigma_{K}(x^*)+i_{\overline{\mathcal{V}+N}}(x^*)-\mathrm{min}\, f
\end{equation}
\medskip
where $N=\overline{\mathrm{conv}} \widehat{N}$ and $K,\mathcal{V},\widehat{N}$ are defined by \eqref{eq:Definition of K}, \eqref{eq:Definition of V} and \eqref{eq:Definition of N} respectively.
\end{Prop}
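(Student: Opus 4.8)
The plan is to compute $f^*$ by dualizing the structural description of $f$ obtained in Proposition~\ref{Prop: values of f}, and then invoke Fact~\ref{BD2002}(ii) to upgrade the identity from a dense subset of $\mathrm{int}(\mathrm{dom}\,f^*)$ to all of $X^*$. First I would observe that the hypothesis $\mathrm{int}(\mathrm{dom}\,f)\cap\mathrm{argmin}\,f\neq\varnothing$ guarantees (after a harmless normalization, say $\min f = 0$, which we carry along as the additive constant $-\min f$) that $0\in\mathcal{V}$, so Proposition~\ref{Prop: values of f} and Lemma~\ref{Lem:Description of subK} both apply; in particular $K\subseteq\mathrm{argmin}\,f$, $\mathcal{V}\perp K-K$, and for every $x\in\mathrm{int}(\mathrm{dom}\,f)$, $x^*\in\partial f(x)$ and any fixed $\widehat{x}\in K$ we have $f(x)=\langle x-\widehat{x},x^*\rangle+\min f$, equivalently $f^*(x^*)=\langle\widehat{x},x^*\rangle-\min f=\sigma_K(x^*)-\min f$ (the last equality because $\langle\widehat{x},x^*\rangle$ is independent of $\widehat{x}\in K$, hence equals $\sup_{\widehat{y}\in K}\langle\widehat{y},x^*\rangle$).

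Next I would identify $\mathrm{dom}\,f^*$ with the closure of $\partial f(K)=\mathcal{V}+\widehat{N}$. Since $\mathrm{dom}\,(Df^*)$ is by hypothesis dense in $\mathrm{dom}\,f^*$, and by Lemma~\ref{Lem:Description of subK} we have $\mathrm{dom}\,(Df^*)\subseteq\partial f(K)=\mathcal{V}+\widehat{N}\subseteq\mathrm{dom}\,f^*$, taking closures gives $\overline{\mathrm{dom}\,f^*}=\overline{\mathcal{V}+\widehat{N}}$; combined with the fact that a proper convex $w^*$-lsc function has $w^*$-closed sublevel behaviour one gets that $f^*$ is finite precisely on a set sandwiched between $\mathcal{V}+\widehat{N}$ and its closure. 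Passing to the closed convex hull and using that $\mathcal{V}$ is already $w^*$-closed convex while $N:=\overline{\mathrm{conv}}\,\widehat{N}$, one obtains $\overline{\mathrm{dom}\,f^*}=\overline{\mathcal{V}+N}$, so $i_{\overline{\mathcal{V}+N}}$ is the correct domain-indicator term. At this stage the claimed formula $f^*(x^*)=\sigma_K(x^*)+i_{\overline{\mathcal{V}+N}}(x^*)-\min f$ holds on the dense subset $\mathrm{dom}\,(Df^*)$ of $\mathrm{int}(\mathrm{dom}\,f^*)$: indeed the indicator term vanishes there and we computed $f^*=\sigma_K-\min f$ on that set.

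Finally I would check the inequality $f^*(x^*)\le\sigma_K(x^*)+i_{\overline{\mathcal{V}+N}}(x^*)-\min f$ on all of $X^*$ (equivalently that the right-hand side is a $w^*$-lsc convex minorant-from-above); $\sigma_K$ is $w^*$-lsc convex, $i_{\overline{\mathcal{V}+N}}$ is $w^*$-lsc convex since $\overline{\mathcal{V}+N}$ is $w^*$-closed convex, and the inequality on a dense subset of $\mathrm{int}(\mathrm{dom}\,f^*)$ together with Fenchel–Young / the computation $f^*=\sigma_K-\min f$ there gives the comparison. Then Fact~\ref{BD2002}(ii), applied with $g$ the right-hand side (whose domain $\overline{\mathcal{V}+N}$ has nonempty interior because $\mathrm{int}(\mathrm{dom}\,f^*)\neq\varnothing$ by hypothesis — here we use that $f^*$ has interior points, inherited from $\mathrm{int}(\mathrm{dom}\,f)\neq\varnothing$ via the Asplund/RNP machinery) and $f$ the function $f^*$, upgrades the dense equality to equality everywhere on $X^*$. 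I expect the main obstacle to be the bookkeeping around the domain of $f^*$: showing rigorously that $\overline{\mathrm{dom}\,f^*}=\overline{\mathcal{V}+N}$ and that the dense set $\mathrm{dom}\,(Df^*)$ on which we have the formula actually meets every slice needed to apply Fact~\ref{BD2002}(ii), i.e.\ that it is dense in $\mathrm{int}(\mathrm{dom}\,f^*)$ and not merely in $\mathrm{dom}\,f^*$; this is where the density hypothesis and the Asplund differentiability of $f^*$ on a $G_\delta$-dense set must be combined carefully.
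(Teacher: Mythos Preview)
Your overall strategy---establish the formula on the dense set $\mathrm{dom}(Df^*)$ and then invoke Fact~\ref{BD2002}(ii)---matches the paper's, but the execution has two concrete gaps. First, the global inequality you propose to verify is in the wrong direction. You write ``check the inequality $f^*(x^*)\le\sigma_K(x^*)+i_{\overline{\mathcal V+N}}(x^*)-\min f$'', yet to apply Fact~\ref{BD2002}(ii) with ``$g$ the right-hand side and $f$ the function $f^*$'' you need precisely $f^*\ge g$, not $f^*\le g$. The inequality $f^*\le g$ is \emph{not} available a priori: a point $x^*\in\overline{\mathcal V+N}\setminus\mathrm{dom}\,f^*$ would violate it if $\sigma_K(x^*)<+\infty$. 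The paper obtains $f^*\ge g$ globally by a one-line argument---since $K\subseteq\mathrm{argmin}\,f$, Fenchel--Young at any $\widehat x\in K$ gives $f^*(x^*)\ge\langle\widehat x,x^*\rangle-\min f$, hence $f^*(x^*)\ge\sigma_K(x^*)-\min f$; combined with $\mathrm{dom}\,f^*\subseteq\overline{\mathcal V+N}$ this yields $f^*\ge g$. You never produce this argument, and the remarks about $\sigma_K$ and $i_{\overline{\mathcal V+N}}$ being $w^*$-lsc convex do not establish either inequality.

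Second, your ``independence'' computation $f^*(x^*)=\sigma_K(x^*)-\min f$ is carried out for $x^*\in\partial f(x)$ with $x\in\mathrm{int}(\mathrm{dom}\,f)$, i.e.\ for $x^*$ in the pre-hull of $\mathcal V$; it does not directly cover $\mathrm{dom}(Df^*)$, which by Lemma~\ref{Lem:Description of subK} sits in $\mathcal V+\widehat N$ and is generally larger. On $\mathrm{dom}(Df^*)$ the correct argument gives only the inequality $f^*(v^*)\le\sigma_K(v^*)-\min f$ (via $\widehat a=Df^*(v^*)\in K$ and $f(\widehat a)=\min f$), which combined with the global $f^*\ge g$ yields equality there. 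Finally, the claim that $\mathrm{int}(\mathrm{dom}\,f^*)\neq\varnothing$ is ``inherited from $\mathrm{int}(\mathrm{dom}\,f)\neq\varnothing$ via the Asplund/RNP machinery'' is false: these are independent hypotheses, and in the paper $\mathrm{int}(\mathrm{dom}\,f^*)\neq\varnothing$ is assumed separately (see Corollary~\ref{mainCor}).
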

\begin{proof} Define $g:X^* \to \mathbb{R}\cup \{+\infty \}$ by
\[g(x^*)=\sigma_{K}(x^*)+i_{\overline{\mathcal{V}+N}}(x^*)-\mathrm{min}f\]
Clearly $g$ is proper, convex and lsc. Since of $\mathrm{dom}\, f^*$ is convex Lemma~\ref{Lem:Description of subK} gives
\[\mathrm{dom}\,(Df^*) \subseteq \mathcal{V}+\mathrm{conv}\widehat{N} \subseteq \mathrm{dom}\,f^*\]
Therefore
\[\mathrm{dom}\, f^* \subseteq \overline{\mathrm{dom}\,}(Df^*) = \overline{\mathcal{V}+N} \]
which yields
\begin{equation}\label{eq:4.6.1}
i_{\mathrm{dom}\,f^*}(x^*) \geq i_{\overline{V+N}}(x^*), \,\,\, \text{for all} \,\,x^* \in X^*
\end{equation}
 Let $\overline{x} \in \mathrm{int}(\mathrm{dom}\, f)$ with $0 \in \partial f(\overline{x}) \subseteq \mathcal{V}$, i.e. $f(\overline{x})=\mathrm{min}f$ . Then since $0 \in \mathcal{V} \subseteq \partial f(\widehat{x})$ for all $\widehat{x} \in K$, (cf. Corollary~\ref{Prop:Sub containment}) given $x^* \in X^*$ we have
\begin{align*}
f(\overline{x})+f^*(x^*)=\Phi_{f}(\overline{x},x^*)=F_{\partial f}(\overline{x},x^*) &= \sup_{(a,a^*) \in \mathrm{Gr}(\partial f)} \left\{ \langle a,x^* \rangle +\langle \overline{x},a^* \rangle -\langle a,a^* \rangle \right\} \\[4pt]
&\geq \sup_{\widehat{x} \in K} \left\{ \langle \widehat{x},x^* \rangle +\langle \overline{x},0 \rangle- \langle \widehat{x},0\rangle\right\} \,=\,  \sigma_{K}(x^*).
\end{align*}
Therefore
\[f^*(x^*) \geq \sigma_{K}(x^*)-\mathrm{min}\, f\,. \]
This together with \eqref{eq:4.6.1} implies that for every $ x^* \in X^*$
\[f^*(x^*)= f^*(x^*)+i_{\mathrm{dom}\, f^*}(x^*) \geq\sigma_{K}(x^*)+i_{\overline{\mathcal{V}+N}}(x^*)-\mathrm{min} \,f = g(x^*)\]
Furthermore, for every $ v^* \in \mathrm{dom}\,(Df^*)$, by Proposition~\ref{Prop: values of f} we have 
$$\widehat{a}=Df^*(v^*) \in \mathrm{argmin}\,f.$$ 
In particular
\[f^*(x^*)+\mathrm{min}\,f=f^*(v^*)+f(\widehat{a}) = \langle \widehat{a},v^* \rangle \leq \sigma_{K}(v^*).\]
The above yields
\[f^*(v^*)=g(v^*),\quad \text{for every} \,\, v^* \in \mathrm{dom}\,(Df^*).\]
Since $\mathrm{dom}\,(Df^*)$ is dense in $\mathrm{dom}\,g$, we apply Fact~\ref{BD2002} to conclude 
$f^*(x^*)=g(x^*)$
for all $x^* \in X^*$, completing the proof.
\end{proof}
\noindent Having obtained an explicit formula for $f^*$, we now compute $f$.
\begin{Prop}\label{Prop:typos f}
    Let $f:X \rightarrow \mathbb{R}\cup \{+\infty \}$ be proper convex and lsc. Assume that $\mathrm{dom}\,(Df^*)$ is densely contained in $\mathrm{dom}\,f^*$, $\mathrm{int}(\mathrm{dom}\, f) \cap \mathrm{argmin}f \neq \varnothing$ and $\mathcal{F}_A$ is a singleton. Let $C=N^{\circ}=(\overline{\mathrm{conv}} \widehat{N})^{\circ}$ and $K,\mathcal{V}, \widehat{N}$ be defined as in \eqref{eq:Definition of K}, \eqref{eq:Definition of V}, \eqref{eq:Definition of N} respectively. Then for every $x \in X$ and $\widehat{x} \in K$ we have:
\begin{equation}\label{eq:typos f}
    f(x)=\sigma_{\mathcal{V}}(x-\widehat{x})+i_{\overline{K+C}}(x)+\mathrm{min}f
\end{equation}
\end{Prop}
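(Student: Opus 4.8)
The plan is to pass to conjugates in the formula $f^{*}=\sigma_{K}+i_{\overline{\mathcal V+N}}-\min f$ of Proposition~\ref{Prop:typos f^*}. Since $f$ is proper, convex and lsc we have $f=f^{**}$, so everything reduces to computing the conjugate of $x^{*}\mapsto\sigma_{K}(x^{*})+i_{\overline{\mathcal V+N}}(x^{*})$. First I would collect the elementary conjugacy facts: $\sigma_{K}=i_{K}^{*}$; the set $N=\overline{\mathrm{conv}}\,\widehat N$ is a $w^{*}$-closed convex cone with polar $C=N^{\circ}$, so the identity $\sigma_{N}=i_{C}$ from the preliminaries gives $\sigma_{\overline{\mathcal V+N}}=\sigma_{\mathcal V+N}=\sigma_{\mathcal V}+i_{C}$; dually $i_{C}^{*}=\sigma_{C}=i_{N}$ (bipolar theorem) and $\sigma_{\mathcal V}^{*}=i_{\mathcal V}$ because $\mathcal V$ is $w^{*}$-closed convex. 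Feeding $\phi=i_{K}$ and $\psi=\sigma_{\mathcal V}+i_{C}$ into the infimal-convolution duality \eqref{eq:duality infimal conv} (whose right-hand side $\phi^{*}+\psi^{*}$ equals $\sigma_{K}+i_{\overline{\mathcal V+N}}$) and conjugating once more yields
\[
(\sigma_{K}+i_{\overline{\mathcal V+N}})^{*}=\overline{\,i_{K}\,\square\,(\sigma_{\mathcal V}+i_{C})\,},\qquad\text{hence}\qquad f=\min f+\overline{\,i_{K}\,\square\,(\sigma_{\mathcal V}+i_{C})\,}.
\]

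Next I would evaluate the inf-convolution pointwise: for $x\in X$ it equals $\inf\{\sigma_{\mathcal V}(x-k):k\in K,\ x-k\in C\}$, and here the orthogonality $\mathcal V\perp K-K$ (Proposition~\ref{Prop: values of f}) is decisive: for every $k,\widehat x\in K$ one has $\langle k-\widehat x,v^{*}\rangle=0$ for all $v^{*}\in\mathcal V$, so $\sigma_{\mathcal V}(x-k)=\sigma_{\mathcal V}\bigl((x-\widehat x)+(\widehat x-k)\bigr)=\sigma_{\mathcal V}(x-\widehat x)$ independently of $k\in K$. Thus $i_{K}\,\square\,(\sigma_{\mathcal V}+i_{C})=\sigma_{\mathcal V}(\,\cdot-\widehat x\,)+i_{K+C}$ for every fixed $\widehat x\in K$, and the assertion reduces to identifying its lsc envelope with $\sigma_{\mathcal V}(\,\cdot-\widehat x\,)+i_{\overline{K+C}}$.

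One inequality is immediate: $\sigma_{\mathcal V}(\,\cdot-\widehat x\,)+i_{\overline{K+C}}$ is lsc, convex and dominated by $\sigma_{\mathcal V}(\,\cdot-\widehat x\,)+i_{K+C}$, hence does not exceed the envelope. For the reverse inequality I would not argue with the envelope directly but mimic the ending of the proof of Proposition~\ref{Prop:typos f^*}. First I establish $f-\min f\ \ge\ \sigma_{\mathcal V}(\,\cdot-\widehat x\,)+i_{\overline{K+C}}$ on all of $X$: the bound $f(x)\ge\sigma_{\mathcal V}(x-\widehat x)+\min f$ follows by testing the supremum defining $f^{**}$ against functionals $v^{*}\in\mathcal V\subseteq\overline{\mathcal V+N}$, using $\sigma_{K}(v^{*})=\langle\widehat x,v^{*}\rangle$, while $\mathrm{dom}\,f\subseteq\overline{K+C}$ follows by testing against $n^{*}\in N$ and letting the cone parameter tend to infinity, combined with the supporting-halfspace description $\overline{K+C}=\{x:\langle x,n^{*}\rangle\le\sigma_{K}(n^{*})\ \forall\,n^{*}\in N\}$, itself a consequence of $\sigma_{\overline{K+C}}=\sigma_{K}+i_{N}$. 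Next, $f-\min f$ agrees with $\sigma_{\mathcal V}(\,\cdot-\widehat x\,)$ on $\mathrm{int}(\mathrm{dom}\,f)$: for such $x$ and $x^{*}\in\partial f(x)$, Proposition~\ref{Prop: values of f} gives $f(x)=\langle x-\widehat x,x^{*}\rangle+\min f$, and $\partial f(x)\subseteq\mathcal V$ by Corollary~\ref{Prop:Sub containment}, whence $f(x)\le\sigma_{\mathcal V}(x-\widehat x)+\min f$; equality then follows from the previous step. Finally Fact~\ref{BD2002}(ii), applied to the pair $f-\min f\ \ge\ \sigma_{\mathcal V}(\,\cdot-\widehat x\,)+i_{\overline{K+C}}$, upgrades agreement on $\mathrm{int}(\mathrm{dom}\,f)$ to equality everywhere.

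I expect the main obstacle to be exactly this last step, namely verifying that $\mathrm{int}(\mathrm{dom}\,f)$ is dense in the interior of the domain of the right-hand side — equivalently, that $\mathrm{dom}\,f$ and $\mathrm{dom}\bigl(\sigma_{\mathcal V}(\,\cdot-\widehat x\,)+i_{\overline{K+C}}\bigr)$ have the same closure. The inf-convolution computation and the support/polar identities are purely formal; the real content is the bookkeeping around the two $w^{*}$-closures $\overline{K+C}$ and $\overline{\mathcal V+N}$ and around domains, which ultimately rests on $\overline{\mathrm{dom}\,f}=\overline{K+C}$ (the inclusion $\subseteq$ is proved above; $\supseteq$ should follow from $K\subseteq\mathrm{dom}\,f$ and the description of $\partial f(K)$ in Lemma~\ref{Lem:Description of subK}) — and this is where the standing hypotheses $\mathrm{int}(\mathrm{dom}\,f)\cap\mathrm{argmin}\,f\ne\varnothing$ (so that $0\in\mathcal V$) and $\mathrm{dom}\,(Df^{*})$ dense in $\mathrm{dom}\,f^{*}$ must be used in full.
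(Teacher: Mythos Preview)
Your proposal and the paper's proof share the same computational core: both exploit the infimal-convolution duality~\eqref{eq:duality infimal conv} together with the orthogonality $\mathcal V\perp K-K$ from Proposition~\ref{Prop: values of f} to collapse the infimum. The difference lies in the direction the conjugacy is run and, consequently, on which side Fact~\ref{BD2002} is invoked. The paper defines the candidate $g(x)=\sigma_{\mathcal V}(x-\widehat x)+i_{\overline{K+C}}(x)+\min f$, computes $g^{*}$ (obtaining $g^{*}+\min f=\overline{\sigma_{K}+i_{\mathcal V+N}}$), and compares $g^{*}$ with the already-known $f^{*}$ from Proposition~\ref{Prop:typos f^*}: one gets $g^{*}\ge f^{*}$ with equality on $\mathcal V+N$, and since $\mathrm{dom}(Df^{*})\subseteq\mathcal V+N$ by Lemma~\ref{Lem:Description of subK}, the standing hypothesis makes $\mathcal V+N$ dense in $\mathrm{dom}\,f^{*}$. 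Fact~\ref{BD2002}(ii) on the dual side then yields $g^{*}=f^{*}$, hence $g=f$. You instead conjugate $f^{*}$ directly, land on $f=\min f+\overline{\sigma_{\mathcal V}(\cdot-\widehat x)+i_{K+C}}$, and seek to match the envelope with $g$ via Fact~\ref{BD2002} applied to the pair $f\ge g$ on the primal side.

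The obstacle you flag is exactly what the paper's orientation sidesteps. On the dual side the needed density is handed to you by the hypothesis on $\mathrm{dom}(Df^{*})$; on the primal side you must show $\mathrm{int}(\mathrm{dom}\,f)$ is dense in $\mathrm{int}(\mathrm{dom}\,g)$, and your suggestion to reduce this to $\overline{\mathrm{dom}\,f}=\overline{K+C}$ does not quite close the gap, because $\mathrm{dom}\,g=\overline{K+C}\cap\mathrm{dom}\,\sigma_{\mathcal V}(\cdot-\widehat x)$ may be strictly smaller than $\overline{K+C}$ when $\mathcal V$ is unbounded, and controlling the intersection with the (in general neither open nor closed) barrier cone of $\mathcal V$ is not automatic. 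Your separate argument for $f\ge g$ via testing $f^{**}$ at $v^{*}\in\mathcal V$ and $n^{*}\in N$ is correct but redundant, since that inequality already falls out of the envelope. In short: the plan is sound, the computations are right, but the final density step is left open; the paper avoids it entirely by placing Fact~\ref{BD2002} on the side where the hypotheses supply the density for free.
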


\begin{proof}
Fix $\widehat{x}_{K} \in K$ and define a function $g$ as follows:
    \[g(x)=\sigma_{\mathcal{V}}(x-\widehat{x}_{K})+i_{\overline{K+N^{\circ}}}(x)+\mathrm{min}f.\]
Recall that $\mathcal{V} \perp K-K$ by Proposition~\ref{Prop: values of f}, thus for all $v^* \in \mathcal{V}$ and $\widehat{x}\in K$ 
\[\langle \widehat{x},v^* \rangle = \langle \widehat{x}_{K},v^* \rangle , \,\, \forall \widehat{x} \in K.\]
This implies that
\[\sigma_{\mathcal{V}}(x-\widehat{x})=\sigma_{\mathcal{V}}(x-\widehat{x}_{K})\,,\]
therefore the function $g$ is well defined and is independent of the choice of $\widehat{x}$. Note that
\[(i_{\overline{K+N^{\circ}}})^*(x^*)=\sigma_{\overline{K+N^{\circ}}}(x^*)=\sigma_{K+N^{\circ}}(x^*)=\sigma_{K}(x^*)+\sigma_{N^{\circ}}(x^*)=\sigma_{K}(x^*)+i_{N}(x^*),\]
where we used the fact that $\sigma_{N^o} \equiv i_{N}$ for the closed convex cone $N= \overline{\mathrm{conv}}N$. Recalling \eqref{eq:duality infimal conv}, we infer
\begin{equation}\label{eq:4.7.1}
    \notag g^*(x^*)+\mathrm{min}f=\overline{(\sigma_{\mathcal{V}}^{*}+\langle \widehat{x}_K,\cdot \rangle ) \square (i_{\overline{K+N^{o}}})^*}(x^*)=(\overline{i_{\mathcal{V}}+ \langle \widehat{x}_K,\cdot \rangle) \square (\sigma_{K}+i_{N})}(x^*).
\end{equation}
Furthermore:
\begin{align}\label{eq:4.7.2}
    \notag (i_{\mathcal{V}}+\langle \widehat{x}_{K}, \cdot \rangle ) \square (\sigma_{K}+i_{N})(x^*) &= \inf_{u^*+v^*=x^*} \left\{ i_{\mathcal{V}}(u^*)+\langle \widehat{x}_K,u^*\rangle +\sigma_{K}(v^*)+i_{N}(v^*)\right\} \\[6pt]
    &=\inf_{u^* \in \mathcal{V}} \left\{ \sigma_{K}(x^*-u^*)+\langle \widehat{x}_K , u^* \rangle +i_{N}(x^*-u^*)\right\}.
\end{align}
Since $\mathcal{V} \perp K-K$, we have again that for all $u^* \in \mathcal{V}$ and $\widehat{x}\in K$ we have:
\[\langle \widehat{x},u^* \rangle = \langle \widehat{x}_{K},u^* \rangle.\]
Therefore,
\[\sigma_{K}(x^*-u^*)=\sup_{\widehat{x} \in K} \{\langle \widehat{x},x^*-u^*\rangle\} = \sup_{\widehat{x}\in K} \langle \widehat{x},x^*\rangle - \langle \widehat{x}_{K},u^*\rangle = \sigma_{K}(x^*)-\langle \widehat{x}_{K},u^*\rangle \,.\]
Therefore the infimum in \eqref{eq:4.7.1} dissapears:
\[\inf_{u^* \in \mathcal{V}} \left\{ \sigma_{K}(x^*-u^*)+\langle \widehat{x}_K , u^* \rangle +i_{N}(x^*-u^*)\right\}= \sigma_{K}(x^*)+\inf_{u^*\in \mathcal{V}}i_{N}(x^*-u^*)=\sigma_{K}(x^*)+i_{N+K}(x^*)\]
and we obtain
\begin{equation}\label{eq:4.7.3}
g^*(x^*)+\mathrm{min}f=\overline{\sigma_{K}+i_{\mathcal{V}+N}}(x^*).
\end{equation}
By \eqref{eq:typos f*} $\sigma_{K}(x^*)+i_{\overline{\mathcal{V}+N}}:=f^*(x^*)+\mathrm{min}\,f$. Since this function is proper convex and lsc, it follows that $\overline{\sigma_{K}+i_{\mathcal{V}+N}}(x^*) \geq \sigma_{K}(x^*)+i_{\overline{\mathcal{V}+N}}(x^*)$ and consequently \eqref{eq:4.7.3} yields
\[g^*(x^*)+\mathrm{min}f \geq \sigma_{K}(x^*)+i_{\overline{\mathcal{V}+N}}(x^*)=f^*(x^*)+\mathrm{min}f\]
that is,
\[g^*(x^*)\geq f^*(x^*), \quad \text{for all} \quad x^* \in X^*\]
Moreover
\[g^{*}(x^*)=f^{*}(x^*), \quad \text{for all} \quad x^* \in \mathcal{V}+N\]
Since $\mathcal{V}+N$ is dense in $\mathrm{dom}\,f^*$ and $\mathrm{int}(\mathrm{dom}\, f^*) \neq \varnothing$ we deduce by Fact~\ref{BD2002} that
\[g^*(x^*)=f^{*}(x^*), \,\, \forall x^* \in X^*.\]
As $f,g$ are proper convex and lsc with $f^*=g^*$, it follows 
\[f(x)=g(x)=\sigma_{\mathcal{V}}(x-\widehat{x}_K)+i_{\overline{K+N}}(x)+\mathrm{min}f.\]
The proof is complete.
\end{proof}
\noindent As an immediate corollary, we deduce the following result, which corresponds to the \textit{only if} part of the statement of Theorem~B.
\begin{Cor}\label{mainCor}
Let $X$ be an RN space, $f : X \to \mathbb{R}\cup \{+\infty \}$ be proper, convex, lsc with $\mathrm{int}(\mathrm{dom}\,f) \neq \varnothing$ and 
$\mathrm{int}(\mathrm{dom}\,f^{*}) \neq \varnothing$. Then if $\mathcal{F}_{\partial f}$ is a singleton, there exist a constant $c \in \mathbb{R}$, a functional $\overline{x}^* \in X^*$ and closed convex sets $K,C \subseteq X$ and $\mathcal{V} \subseteq X^*$ where
\begin{itemize}
    \item  $C$ is a cone, $\mathcal{V}$ is $w^*$-closed 
    \item $0 \in \mathcal{V} \perp K-K.$
\end{itemize}
such that for every $\widehat{x} \in K$
\[
    f(x) \;=\; \sigma_{\mathcal{V}}(x - \widehat{x}) 
    \;+\; i_{\overline{K + C}}(x) 
    \;+\; \langle x, \overline{x}^* \rangle 
    \;+\; c \,.
\]
\end{Cor}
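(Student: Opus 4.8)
\textit{Proof of Corollary~\ref{mainCor} (plan).} The plan is to reduce the statement to Proposition~\ref{Prop:typos f}, whose conclusion is exactly the claimed formula (with $\min\widetilde f$ in place of $c$ and without the linear term), but which carries three standing assumptions: that $\mathrm{dom}\,(Df^{*})$ is densely contained in $\mathrm{dom}\,f^{*}$, that $\mathrm{int}(\mathrm{dom}\,f)\cap\mathrm{argmin}\,f\neq\varnothing$, and that $\mathcal{F}_{\partial f}$ is a singleton. The last is given. The first will be supplied by the RNP of $X$, and the second will be arranged by subtracting a linear functional and invoking the translation invariance established in Proposition~\ref{prop:Translation Invariance}.

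First I would dispose of the density hypothesis. Since $X$ has the RNP, Collier's theorem gives that $X^{*}$ is $w^{*}$-Asplund; as $f^{*}$ is convex and $w^{*}$-lsc and $\mathrm{int}(\mathrm{dom}\,f^{*})\neq\varnothing$, it is Fréchet differentiable on a dense $G_{\delta}$ subset of $\mathrm{int}(\mathrm{dom}\,f^{*})$. In particular $\mathrm{dom}\,(Df^{*})\neq\varnothing$ and $\mathrm{dom}\,(Df^{*})$ is dense in $\mathrm{int}(\mathrm{dom}\,f^{*})$; since $\mathrm{dom}\,f^{*}$ is convex with nonempty interior one has $\overline{\mathrm{int}(\mathrm{dom}\,f^{*})}=\overline{\mathrm{dom}\,f^{*}}$, so $\mathrm{dom}\,(Df^{*})$ is densely contained in $\mathrm{dom}\,f^{*}$, as needed, and the same will hold after the translation performed below.

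Next I would reduce to the centered case. Choose $x_{0}\in\mathrm{int}(\mathrm{dom}\,f)$; since $f$ is convex lsc it is continuous at $x_{0}$, hence $\partial f(x_{0})\neq\varnothing$, and I pick $\overline{x}^{*}\in\partial f(x_{0})$ and set $\widetilde f:=f-\langle\,\cdot\,,\overline{x}^{*}\rangle$. Then $\mathrm{Gr}(\partial\widetilde f)=\{(x,x^{*}-\overline{x}^{*}):(x,x^{*})\in\mathrm{Gr}(\partial f)\}$ is a translate of $\mathrm{Gr}(\partial f)$, so Proposition~\ref{prop:Translation Invariance} (applied with $\lambda_{1}=\lambda_{2}=1$, $w=0$, $w^{*}=\overline{x}^{*}$) shows that $\mathcal{F}_{\partial\widetilde f}$ is again a singleton. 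Moreover $0\in\partial\widetilde f(x_{0})$ and $\mathrm{int}(\mathrm{dom}\,\widetilde f)=\mathrm{int}(\mathrm{dom}\,f)$, so $x_{0}\in\mathrm{int}(\mathrm{dom}\,\widetilde f)\cap\mathrm{argmin}\,\widetilde f$; and $\widetilde f^{*}(x^{*})=f^{*}(x^{*}+\overline{x}^{*})$, so $\mathrm{int}(\mathrm{dom}\,\widetilde f^{*})\neq\varnothing$ and the previous paragraph applies verbatim to $\widetilde f$. Thus all hypotheses of Proposition~\ref{Prop:typos f} hold for $\widetilde f$.

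Applying Proposition~\ref{Prop:typos f} to $\widetilde f$ then produces the closed convex sets $K\subseteq X$, $\mathcal{V}\subseteq X^{*}$ and $C=N^{\circ}$ associated to $\widetilde f$ — here $N=\overline{\mathrm{conv}}\,\widehat N$ is a union of normal cones, hence a closed convex cone, so its polar $C$ is a closed convex cone as well — satisfying $\widetilde f(x)=\sigma_{\mathcal{V}}(x-\widehat x)+i_{\overline{K+C}}(x)+\min\widetilde f$ for all $x\in X$ and all $\widehat x\in K$. Adding back $\langle x,\overline{x}^{*}\rangle$ and setting $c:=\min\widetilde f$ yields the stated formula for $f$; it then remains only to record that $\mathcal{V}$ is $w^{*}$-closed and convex by its definition, that $0\in\mathcal{V}$ since $0\in\partial\widetilde f(x_{0})$ with $x_{0}\in\mathrm{int}(\mathrm{dom}\,\widetilde f)$, and that $\mathcal{V}\perp K-K$ by Proposition~\ref{Prop: values of f} applied to $\widetilde f$ (whose hypotheses $0\in\mathcal{V}$ and $\mathrm{dom}\,(D\widetilde f^{*})\neq\varnothing$ were just verified). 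Since the real work is already carried by Propositions~\ref{Prop: values of f}--\ref{Prop:typos f}, the only delicate point is this reduction: checking that the normalization $\overline{x}^{*}\in\partial f(x_{0})$ does not spoil the domain hypotheses (it does not, as subtracting a linear functional changes neither $\mathrm{dom}\,f$ nor its interior, and merely shifts $\mathrm{dom}\,f^{*}$), that Proposition~\ref{prop:Translation Invariance} is invoked in the correct direction, and that $C=N^{\circ}$ is genuinely a cone, which it is because $N$ is. \hfill$\square$
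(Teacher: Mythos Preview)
Your proof is correct and follows essentially the same route as the paper: use the RNP/$w^{*}$-Asplund correspondence to secure the density of $\mathrm{dom}\,(Df^{*})$, subtract a subgradient at an interior point to center $\widetilde f$ so that $\mathrm{int}(\mathrm{dom}\,\widetilde f)\cap\mathrm{argmin}\,\widetilde f\neq\varnothing$, invoke translation invariance to keep $\mathcal{F}_{\partial\widetilde f}$ a singleton, and then apply Proposition~\ref{Prop:typos f}. You are in fact slightly more explicit than the paper in verifying the side properties ($C$ a cone, $0\in\mathcal{V}$, $\mathcal{V}\perp K-K$), which is fine.
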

\smallskip
\begin{proof} Let us first notice that $f^*$ is a proper convex and $w^*$-lsc function. Since $X$ is an RNP-space, the dual space $X^*$ is $w^*$-Asplund and consequently the assumption $\mathrm{int}(\mathrm{dom}\,f^*) \neq \varnothing$ yields
$\overline{\mathrm{dom}\,(Df^*)} = \overline{\mathrm{dom}\,f^*}$. \smallskip\newline
Let further $\overline{x}\in \mathrm{int}(\mathrm{dom}\, f)$, $\overline{x}^* \in \partial f(\overline{x})$ and define the function
\[\widetilde{f}(x)\, := f(x)-\langle x-\overline{x},\overline{x}^* \rangle, \quad x\in X. \]
Since $\mathcal{F}_{\partial f} = \{ F_{\partial f} \}$, we obtain by translation invariance (Proposition~\ref{prop:Translation Invariance}) that $\mathcal{F}_{\partial \widetilde{f}} = \{F_{\partial \widetilde{f}}\}$. In addition, 
$$\overline{x} \in \mathrm{int}(\mathrm{dom}\,\widetilde{f}) \bigcap \mathrm{arg}\mathrm{min}\widetilde{f}\qquad\text{and}\qquad \widetilde{f}(\overline{x})=f(\overline{x})=\mathrm{min}\widetilde{f}.$$ 
The assumptions of Proposition~\ref{Prop:typos f} are thus satisfied for $\widetilde{f}$, allowing us to conclude that for every $x\in X$
\[f(x)=\sigma_{\mathcal{V}}(x)+i_{\overline{K+C}}(x)+\langle x,\overline{x}^* \rangle-\langle \overline{x},\overline{x}^*\rangle+f(\overline{x}) \]
where the sets $\mathcal{V},C$ and $K$ are defined as before. Setting $c=f(\overline{x})-\langle \overline{x},\overline{x}^* \rangle$ the assertion follows.
\end{proof}

\subsection{A class of subdifferentials with unique representative}
The aim of this section is to prove that the class of functions introduced in Section~4.1, satisfy $\mathcal{F}_{\partial f}=\{F_{\partial f}\}$. Before proceeding, we introduce a notation. For a proper convex lsc function $f:X \rightarrow \mathbb{R}\cup \{+\infty \}$, we denote
\[(\,f^* \oplus f\,)(x,x^*)=(f^*(x^*),f(x)) \in \mathbb{R}^2\]
and define
\[\partial_{\e}(\,f^*\oplus f\,)(x,x^*)= \left\{ (y,y^*) \in X \times X^* \colon \,\,  y^* \in \partial_{a}f(x), \,\, y \in \partial_{b} f^*(x^*),  \,\, a+b \leq \e \right\}.\]
Consider further the function $K_{\partial f}: X \times X^* \rightarrow \mathbb{R}\cup \{+\infty \}$ defined via
\begin{equation}\label{eq:Def of K_f}
K_{\partial f}(x,x^*) := \inf\{\varepsilon\ge 0 : \operatorname{Gr}(\partial f)\bigcap \partial_\varepsilon(\,f^*\oplus f\,)(x,x^*)\neq\varnothing \}\,.
\end{equation}
We are now ready to proceed with the main lemma of this section.

\begin{Lem}\label{Thm:F_A from f+f^*}
Let $f:X\rightarrow \mathbb{R}\cup \{+\infty \}$ be proper, convex and lsc. For $(w,v^*) \in \mathrm{dom}\,f \times \mathrm{dom}\,f^*$

\[
\Phi_{f}(w,v^*)=f(w)+f^*(v^*) = F_{\partial f}(w,v^*) + K_{\partial f}(w,v^*).
\]
\end{Lem}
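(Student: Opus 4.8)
The plan is to evaluate $K_{\partial f}(w,v^*)$ in closed form and recognize it as $\Phi_f(w,v^*)-F_{\partial f}(w,v^*)$. First I would unwind the definition \eqref{eq:Def of K_f}: for a fixed pair $(a,a^*)\in\mathrm{Gr}(\partial f)$, the requirement $(a,a^*)\in\partial_\varepsilon(f^*\oplus f)(w,v^*)$ means that $a^*\in\partial_\alpha f(w)$ and $a\in\partial_\beta f^*(v^*)$ for some $\alpha,\beta\ge 0$ with $\alpha+\beta\le\varepsilon$. Using the conjugate characterization of $\varepsilon$-subgradients recalled in the Preliminaries (together with $f^{**}=f$), the least admissible $\alpha$ equals the Fenchel--Young defect $\alpha(a^*):=f(w)+f^*(a^*)-\langle w,a^*\rangle\ge 0$ and the least admissible $\beta$ equals $\beta(a):=f^*(v^*)+f(a)-\langle a,v^*\rangle\ge 0$. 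Both are finite: $w\in\mathrm{dom}\,f$ and $v^*\in\mathrm{dom}\,f^*$ by hypothesis, while $(a,a^*)\in\mathrm{Gr}(\partial f)$ gives $f(a)+f^*(a^*)=\langle a,a^*\rangle<+\infty$, hence $a\in\mathrm{dom}\,f$ and $a^*\in\mathrm{dom}\,f^*$. Therefore the infimum of the $\varepsilon$'s that work for this fixed pair is exactly $\alpha(a^*)+\beta(a)$, whence
\[
K_{\partial f}(w,v^*)=\inf_{(a,a^*)\in\mathrm{Gr}(\partial f)}\Bigl[\bigl(f(w)+f^*(a^*)-\langle w,a^*\rangle\bigr)+\bigl(f^*(v^*)+f(a)-\langle a,v^*\rangle\bigr)\Bigr].
\]

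Next I would simplify the bracket using $f(a)+f^*(a^*)=\langle a,a^*\rangle$, which rewrites it as $f(w)+f^*(v^*)+\langle a,a^*\rangle-\langle w,a^*\rangle-\langle a,v^*\rangle$. The term $f(w)+f^*(v^*)=\Phi_f(w,v^*)$ is a finite constant independent of $(a,a^*)$, so it factors out of the infimum and the remaining infimum becomes a supremum after a sign change:
\begin{align*}
K_{\partial f}(w,v^*)&=\Phi_f(w,v^*)-\sup_{(a,a^*)\in\mathrm{Gr}(\partial f)}\bigl\{\langle w,a^*\rangle+\langle a,v^*\rangle-\langle a,a^*\rangle\bigr\}\\
&=\Phi_f(w,v^*)-F_{\partial f}(w,v^*),
\end{align*}
the last equality being the definition \eqref{eq:Definition of Fitzpatrick2} of the Fitzpatrick function. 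Rearranging gives the claimed identity. There is no $\infty-\infty$ ambiguity: $\mathrm{Gr}(\partial f)\neq\varnothing$ (as $f$ is proper convex lsc), so the $\inf$ and $\sup$ are over a nonempty set; $\Phi_f(w,v^*)$ is finite on $\mathrm{dom}\,f\times\mathrm{dom}\,f^*$; and $F_{\partial f}\le\Phi_f$ (since $\Phi_f\in\mathcal{F}_{\partial f}$ and $F_{\partial f}$ is the minimal representative) while $F_{\partial f}(w,v^*)\ge\langle w,v^*\rangle>-\infty$.

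The only delicate point is the opening step, namely reading off from the definition of $\partial_\varepsilon(f^*\oplus f)$ that the minimal admissible $\varepsilon$ attached to a fixed graph pair is precisely the \emph{sum} of the two separate Fenchel--Young defects, and checking that every quantity involved is finite (this is exactly where $w\in\mathrm{dom}\,f$ and $v^*\in\mathrm{dom}\,f^*$ enter). Everything after that is the short algebraic computation above, resting on $f(a)+f^*(a^*)=\langle a,a^*\rangle$ and on the definition of $F_{\partial f}$.
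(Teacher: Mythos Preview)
Your argument is correct and rests on exactly the same ingredients as the paper's proof: the Fenchel--Young characterization of $\varepsilon$-subgradients, the identity $f(a)+f^*(a^*)=\langle a,a^*\rangle$ on $\mathrm{Gr}(\partial f)$, and the definition of $F_{\partial f}$. The only organizational difference is that the paper establishes the two inequalities $K_{\partial f}+F_{\partial f}\ge\Phi_f$ and $\Phi_f-F_{\partial f}\ge K_{\partial f}$ separately via $\eta$- and $\varepsilon$-approximations, whereas you first rewrite $K_{\partial f}(w,v^*)$ in closed form as $\inf_{(a,a^*)\in\mathrm{Gr}(\partial f)}[\alpha(a^*)+\beta(a)]$ and then identify this infimum directly with $\Phi_f-F_{\partial f}$; your packaging is slightly more streamlined but the content is the same.
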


\begin{proof}
Fix $(w,v^*) \in \mathrm{dom}\,f \times \mathrm{dom}\,f^*$ and notice that since $F_{\partial f}$ and $\Phi_{f}$ belong to $\mathcal{F}_{\partial f}$ and the former is minimal, we have
\[F_{\partial f}(w,v^*) \leq f(w)+f^*(v^*) < + \infty.\] 
\medskip
For any arbitrary $\eta > 0$ , there exist $a,b \geq 0$ and $(y,y^*) \in \mathrm{Gr}(\partial f)$ with $y \in \partial_a f^*(v^*)$ and $y^* \in \partial_b f(w)$ such that $K_{\partial f}(w,v^*)+\eta \geq a+b$. This yields
\[
f(w)+f^*(y^*) \leq \langle w, y^* \rangle + a \quad \text{and} \quad f(y)+f^*(v^*) \leq \langle y, v^* \rangle + b.
\]
By \eqref{eq:Definition of Fitzpatrick2} and the fact that $f(y)+f^*(y^*)=\langle y,y^* \rangle$ we have 
\begin{align*}
F_{\partial f}(w,v^*) &\geq \langle w, y^* \rangle + \langle y, v^* \rangle - \langle y, y^* \rangle \\
&\geq f(w)+f^*(y^*) + f(y)+f^*(v^*) - f(y)-f^*(y^*) - a - b \\
&= f(w) + f^*(v^*) - a - b \geq f(w)+f^*(v^*) - K_{\partial f}(w,v^*) - \eta.
\end{align*}
Thus
\begin{equation}\label{eq:L4.2.1}
K_{\partial f}(w,v^*)+F_{\partial f}(w,v^*) \geq f(w)+f^*(v^*).
\end{equation}
For the reverse inequality, fix $\e >0$ and let $(z,z^*) \in \mathrm{Gr}(\partial f)$ be such that
\[
F_{\partial f}(w,v^*) \leq \langle w,z^* \rangle + \langle z,v^* \rangle - \langle z,z^* \rangle + \varepsilon.
\]
Then
\begin{align*}
f(w)+f^*(v^*)-F_{\partial f}(w,v^*) &\geq f(w)+f^*(v^*)-\langle w,z^*\rangle -\langle z,v^* \rangle + \langle z,z^* \rangle - \e \\
&=
\underbrace{ f(w)+f^*(z^*)-\langle x,z^* \rangle }_{a}+\underbrace{f(z)+f^*(v^*)-\langle z,v^* \rangle }_{b}-\e \\
& \geq K_{\partial f}(w,v^*)- \e
\end{align*}
which together with \eqref{eq:L4.2.1} concludes the proof.
\end{proof}

\begin{Rem}\normalfont
Let us extract from the proof of Lemma~\ref{Thm:F_A from f+f^*} the following result: given a point $(x,x^*) \in \mathrm{dom}\,f \times \mathrm{dom}\,f^*$, the infimum in~\eqref{eq:Def of K_f} is actually attained at some $\e \geq 0$ if and only if the supremum in the definition of the Fitzpatrick function~\eqref{eq:Definition of Fitzpatrick2} is attained.
\medskip
\end{Rem}

\noindent An immediate consequence is the following result

\begin{Cor}\label{Cor:Singleton Criterion}
Let $f: X \to \mathbb{R}\cup \{+\infty \}$ be proper, convex, and lsc. Then $\mathcal{F}_{\partial f} = \{F_{\partial f}\}$ if and only if
\[ \mathrm{dom}\,F_{\partial f} = \mathrm{dom}\,f \times \mathrm{dom}\,f^* \]
and for every $(x,x^*) \in \mathrm{dom}\,f \times \mathrm{dom}\,f^*$ and $\e >0$
\[
\partial_\varepsilon \left(f^* \oplus f \right)(x,x^*) \cap \mathrm{Gr}(\partial f) \neq \emptyset\,.
\]
\end{Cor}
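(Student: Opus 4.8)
The plan is to deduce this directly from Lemma~\ref{Thm:F_A from f+f^*}, which decomposes the Fenchel--Young gap as $\Phi_f = F_{\partial f} + K_{\partial f}$ on $\mathrm{dom}\,f\times\mathrm{dom}\,f^*$. The key observation is that, by Lemma~\ref{lem:singleton characterization}(iv), $\mathcal{F}_{\partial f}=\{F_{\partial f}\}$ is equivalent to $F_{\partial f}\equiv \Phi_f$ on all of $X\times X^*$. I would split this equality into two pieces: equality of effective domains, and equality of values on the common domain. For the domains: since $\Phi_f \geq F_{\partial f}$ always, we have $\mathrm{dom}\,F_{\partial f} \supseteq \mathrm{dom}\,\Phi_f = \mathrm{dom}\,f \times \mathrm{dom}\,f^*$; so $F_{\partial f}\equiv\Phi_f$ forces the reverse inclusion, giving $\mathrm{dom}\,F_{\partial f} = \mathrm{dom}\,f\times\mathrm{dom}\,f^*$. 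Conversely, assuming this domain equality, both functions are $+\infty$ outside $\mathrm{dom}\,f\times\mathrm{dom}\,f^*$, so it suffices to compare them there.

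On $\mathrm{dom}\,f\times\mathrm{dom}\,f^*$, Lemma~\ref{Thm:F_A from f+f^*} gives $F_{\partial f}(x,x^*) = \Phi_f(x,x^*)$ if and only if $K_{\partial f}(x,x^*) = 0$ (note both terms are finite there, so the subtraction is legitimate). Thus the value-equality $F_{\partial f}\equiv\Phi_f$ on the common domain is equivalent to $K_{\partial f}\equiv 0$ on $\mathrm{dom}\,f\times\mathrm{dom}\,f^*$. The final step is to translate $K_{\partial f}(x,x^*)=0$ into the stated condition. By definition~\eqref{eq:Def of K_f}, $K_{\partial f}(x,x^*)$ is the infimum of those $\e\geq 0$ with $\partial_\e(f^*\oplus f)(x,x^*)\cap\mathrm{Gr}(\partial f)\neq\varnothing$. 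Since these sets are nested increasing in $\e$, the infimum is $0$ precisely when $\partial_\e(f^*\oplus f)(x,x^*)\cap\mathrm{Gr}(\partial f)\neq\varnothing$ for every $\e>0$. Collecting the two equivalences yields the corollary.

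The main point requiring a little care is the bookkeeping around when $K_{\partial f}$ can be subtracted in Lemma~\ref{Thm:F_A from f+f^*} and the fact that $K_{\partial f}(x,x^*)=0$ need not mean the intersection at $\e=0$ is nonempty --- only that it is nonempty for all positive $\e$; this is exactly why the corollary is phrased with a universally quantified $\e>0$ rather than with an exact-subdifferential condition. I expect no genuine obstacle here: the content is entirely in Lemma~\ref{Thm:F_A from f+f^*}, and this corollary is a clean reformulation.
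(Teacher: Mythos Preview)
Your proposal is correct and follows essentially the same route as the paper: both invoke Lemma~\ref{Thm:F_A from f+f^*} to identify $\Phi_f - F_{\partial f}$ with $K_{\partial f}$ on $\mathrm{dom}\,f\times\mathrm{dom}\,f^*$, and then appeal to Lemma~\ref{lem:singleton characterization}(iv). The paper's proof is terser---it only spells out the sufficiency direction---whereas you carefully argue both implications and make explicit the translation between $K_{\partial f}=0$ and the $\varepsilon>0$ intersection condition; this extra care is appropriate and does not constitute a different method.
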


\begin{proof}
The two conditions imply that for every $ (x,x^*) \in X \times X^*$
\[F_{\partial f}(x,x^*)= f(x)+f^*(x^*)=\Phi_{f}(x,x^*)\]
and the result follows from Lemma~\ref{lem:singleton characterization}(iv). \end{proof}

\noindent With this criterion in hand, we now generalize ~\cite[Theorem~5.3]{BBBRW2007} to a broader class of functions. 

\begin{Thm}\label{Thm:Main2}
Consider closed convex sets $K,C \subseteq X$ and $\mathcal{V} \subseteq X^*$ where
\begin{itemize}
    \item $C$ is a cone, $\mathcal{V}$ is $w^*$-closed
    \item $0 \in \mathcal{V} \perp K-K$
\end{itemize}
Assume further the following compatibility condition:
\begin{equation}\label{eq:compatibility condition}
\overline{(\sigma_{K}+i_{\mathcal{V}+C^{\circ}})}(x^*) = \sigma_{K}(x^*)+i_{\overline{\mathcal{V}+C^{\circ}}}(x^*).
\end{equation}
Then for every $\widehat{x}\in K$ define $f:X \rightarrow \mathbb{R}\cup \{+\infty \}$  by
\[
    f(x) \;=\; \sigma_{\mathcal{V}}(x - \widehat{x}) 
    \;+\; i_{\overline{K + C}}(x) \,, \,\, x \in X.
\]
Then the Fitzpatrick family of its subdifferential reduces to a singleton: $\mathcal{F}_{\partial f} = \{F_{\partial f}\}\,.$
\end{Thm}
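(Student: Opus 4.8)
\noindent\textit{Proof plan.}\ The plan is to verify the criterion of Corollary~\ref{Cor:Singleton Criterion}. Note first that $f$ is proper (it vanishes at $\widehat x\in K\subseteq\overline{K+C}$), convex and lsc, so $f^{**}=f$. The first task is to compute $f^{*}$. Since $\mathcal V$ is $w^{*}$-closed convex, $\big(\sigma_{\mathcal V}(\cdot-\widehat x)\big)^{*}=i_{\mathcal V}+\langle\widehat x,\cdot\rangle$, and since $C$ is a closed convex cone, $\big(i_{\overline{K+C}}\big)^{*}=\sigma_{K+C}=\sigma_K+i_{C^{\circ}}$; hence by \eqref{eq:duality infimal conv},
\[
f^{*}=\overline{\big(i_{\mathcal V}+\langle\widehat x,\cdot\rangle\big)\,\square\,\big(\sigma_K+i_{C^{\circ}}\big)}.
\]
Carrying out the inf-convolution exactly as in the proof of Proposition~\ref{Prop:typos f} — using $\mathcal V\perp K-K$ to make the infimum over $\mathcal V$ disappear — reduces the right-hand side to $\overline{\sigma_K+i_{\mathcal V+C^{\circ}}}$, and then the compatibility hypothesis \eqref{eq:compatibility condition} gives $f^{*}(x^{*})=\sigma_K(x^{*})+i_{\overline{\mathcal V+C^{\circ}}}(x^{*})$. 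In particular $\mathrm{dom}\,f=\overline{K+C}\cap\mathrm{dom}\,\sigma_{\mathcal V}(\cdot-\widehat x)$ and $\mathrm{dom}\,f^{*}=\overline{\mathcal V+C^{\circ}}\cap\mathrm{dom}\,\sigma_K$.

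Next I would isolate the structural fact $K\times\mathcal V\subseteq\mathrm{Gr}(\partial f)$: for $\widehat y\in K$ and $v^{*}\in\mathcal V$ one has $f(\widehat y)=\sigma_{\mathcal V}(\widehat y-\widehat x)=0$ (the indicator term vanishes since $\widehat y\in\overline{K+C}$) and $f^{*}(v^{*})=\sigma_K(v^{*})=\langle\widehat x,v^{*}\rangle=\langle\widehat y,v^{*}\rangle$, the last three equalities being consequences of $\mathcal V\perp K-K$ and $0\in C^{\circ}$; thus $f(\widehat y)+f^{*}(v^{*})=\langle\widehat y,v^{*}\rangle$, i.e.\ $v^{*}\in\partial f(\widehat y)$. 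This settles the $\partial_{\varepsilon}$-condition in Corollary~\ref{Cor:Singleton Criterion}: given $(x,x^{*})\in\mathrm{dom}\,f\times\mathrm{dom}\,f^{*}$ and $\varepsilon>0$, choose $\widehat y\in K$ with $\langle\widehat y,x^{*}\rangle>\sigma_K(x^{*})-\tfrac{\varepsilon}{2}=f^{*}(x^{*})-\tfrac{\varepsilon}{2}$ and $v^{*}\in\mathcal V$ with $\langle x-\widehat x,v^{*}\rangle>\sigma_{\mathcal V}(x-\widehat x)-\tfrac{\varepsilon}{2}=f(x)-\tfrac{\varepsilon}{2}$; a direct check then gives $(\widehat y,v^{*})\in\mathrm{Gr}(\partial f)\cap\partial_{\varepsilon}(f^{*}\oplus f)(x,x^{*})$. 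Evaluating the Fitzpatrick supremum \eqref{eq:Definition of Fitzpatrick2} over the rectangle $K\times\mathcal V$ likewise yields $F_{\partial f}(x,x^{*})\geq\sigma_K(x^{*})+\sigma_{\mathcal V}(x-\widehat x)$; combined with $F_{\partial f}\leq\Phi_f$, this reduces the remaining domain identity $\mathrm{dom}\,F_{\partial f}=\mathrm{dom}\,f\times\mathrm{dom}\,f^{*}$ to two implications: $x\notin\overline{K+C}\Rightarrow F_{\partial f}(x,x^{*})=+\infty$ (in the case $\sigma_K(x^{*})<\infty$), and the symmetric statement for $x^{*}\notin\overline{\mathcal V+C^{\circ}}$.

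To establish these last implications I would feed the recession directions into the graph. If $x^{*}\notin\overline{\mathcal V+C^{\circ}}$, separation produces $w\in C$ with $\sigma_{\mathcal V}(w)<\infty$ and $\rho:=\langle w,x^{*}\rangle-\sigma_{\mathcal V}(w)>0$; using $f(\widehat x+tw)=t\,\sigma_{\mathcal V}(w)$ for $t\geq0$ together with $(\widehat x,v^{*})\in\mathrm{Gr}(\partial f)$ for all $v^{*}\in\mathcal V$, one checks that each $v^{*}\in\mathcal V$ with $\langle w,v^{*}\rangle>\sigma_{\mathcal V}(w)-\delta$ lies in $\partial_{t\delta}f(\widehat x+tw)$. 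Applying Br\o{}nsted--Rockafellar to produce exact pairs $(p,p^{*})\in\mathrm{Gr}(\partial f)$ close to $(\widehat x+tw,v^{*})$ and substituting into \eqref{eq:Definition of Fitzpatrick2} gives $F_{\partial f}(x,x^{*})\geq t\rho+O(1)$, provided the parameters $\delta$, $t$ and the Br\o{}nsted--Rockafellar radius are tuned so that all error terms are $o(t)$; letting $t\to\infty$ forces $F_{\partial f}(x,x^{*})=+\infty$. The case $x\notin\overline{K+C}$ is dual, pushing $x^{*}$ along a separating direction $c^{*}\in C^{\circ}$. I expect this last step to be the main obstacle: controlling the Br\o{}nsted--Rockafellar errors when $\mathcal V$ (equivalently $K$) is unbounded, i.e.\ coping with the possible non-attainment of $\sigma_{\mathcal V}(w)$ and $\sigma_K(c^{*})$. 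A cleaner route is to invoke the Bishop--Phelps theorem to replace the separating direction by a genuine support functional of $\mathcal V$ (resp.\ $K$), for which the corresponding graph pair $(\widehat x+tw,v^{*}_{w})$ belongs to $\mathrm{Gr}(\partial f)$ exactly and the estimate is immediate. Once the domain identity is in hand, Corollary~\ref{Cor:Singleton Criterion} gives $\mathcal F_{\partial f}=\{F_{\partial f}\}$.
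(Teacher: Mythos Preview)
Your plan matches the paper's proof almost exactly: both verify Corollary~\ref{Cor:Singleton Criterion}, compute $f^{*}$ via \eqref{eq:duality infimal conv} and the compatibility condition, establish $K\times\mathcal V\subseteq\mathrm{Gr}(\partial f)$, and check the $\partial_{\varepsilon}$-condition by picking $\widehat y\in K$ and $v^{*}\in\mathcal V$ that $\tfrac{\varepsilon}{2}$-realize the respective suprema. The only divergence is in the domain identity, and there you are working much harder than necessary.

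The paper invokes the standard inclusion
\[
\mathrm{dom}\,f\times\mathrm{dom}\,f^{*}\ \subseteq\ \mathrm{dom}\,F_{\partial f}\ \subseteq\ \overline{\mathrm{dom}\,f}\times\overline{\mathrm{dom}\,f^{*}}
\]
(see~\eqref{eq:4.4.4} in the paper, from \cite{bauschke2006fitzpatrick}). Since $\overline{\mathrm{dom}\,f}\subseteq\overline{K+C}$ and $\overline{\mathrm{dom}\,f^{*}}\subseteq\overline{\mathcal V+C^{\circ}}$, the cases $x\notin\overline{K+C}$ and $x^{*}\notin\overline{\mathcal V+C^{\circ}}$ that you flag as the ``main obstacle'' simply do not occur for $(x,x^{*})\in\mathrm{dom}\,F_{\partial f}$. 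What remains is exactly the situation $x\in\overline{K+C}$ with $\sigma_{\mathcal V}(x-\widehat x)=+\infty$ (and its dual), and this is dispatched by your own rectangle lower bound $F_{\partial f}(x,x^{*})\geq\sigma_K(x^{*})+\sigma_{\mathcal V}(x-\widehat x)$. So your Br\o{}nsted--Rockafellar/Bishop--Phelps machinery is unnecessary --- and in fact potentially problematic, since separating $x^{*}$ from the merely norm-closed set $\overline{\mathcal V+C^{\circ}}$ need not produce a separating element in $X$ rather than $X^{**}$. Replace that paragraph by a one-line appeal to \eqref{eq:4.4.4} and your proof is complete and coincides with the paper's.
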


\begin{proof}
Reasoning as in Proposition~\ref{Prop:typos f}, we deduce that  $f$ is well defined, independently of the choice of $\widehat{x} \in K$. Fix $\widehat{x}_K \in K$. Notice that since $\mathcal{V} \perp K-K$:
\[\mathrm{min} \, f=f(\widehat{x}) = 0 \quad\text{for all} \quad  \widehat{x} \in K\]
\medskip
\noindent \textit{Step 1.} We compute $f^*$ and show that $\mathcal{V} \subseteq \partial f(\widehat{x})$ for all $\widehat{x} \in K$. Similarly to the computation of Proposition~\ref{Prop:typos f} we have:
\begin{equation*}
f^*(x^*)=\overline{\sigma_{K}+i_{\mathcal{V}+C^{\circ}}}(x^*)
\end{equation*}
and using \eqref{eq:compatibility condition} we conclude that in fact 
\begin{equation}\label{eq:4.4.3}
    f^*(x^*)= \sigma_{K}(x^*)+i_{\overline{\mathcal{V}+C^{\circ}}}(x^*).
\end{equation}
Furthermore, for any $v^* \in \mathcal{V}$ we have
\[\langle \widehat{x},v^*\rangle = \langle \widehat{y},v^* \rangle, \,\, \text{for every } \,\, \widehat{x},\, \widehat{y} \in K, \]
which yields
\[f^*(v^*) = \sigma_{K}(v^*)=\langle \widehat{x},v^* \rangle. \]
Since $f(\widehat{x})=0$, for all $\widehat{x} \in K$, it follows that
\[f(\widehat{x})+f^*(v^*)=\langle \widehat{x},v^* \rangle, \quad \text{for every} \quad \widehat{x} \in K\]
therefore in particular $\mathcal{V} \subseteq \bigcap_{\widehat{x} \in K} \partial f(\widehat{x}_{K})$.

\medskip
\noindent \textit{Step 2}. We show that
\begin{equation}\label{eq:domains}
\mathrm{dom}\,F_{\partial f} = \mathrm{dom}\,f \times \mathrm{dom}\,f^*
\end{equation}
To this end, let us recall, see e.g. \cite[Theorem 2.6]{bauschke2006fitzpatrick}, that
\begin{equation}\label{eq:4.4.4}
\mathrm{dom}\,f \times \mathrm{dom}\,f^* \subseteq \mathrm{dom}\,F_{\partial f} \subseteq \overline{\mathrm{dom}\,}\,f \times \overline{\mathrm{dom}\,}\,f^*.
\end{equation}
If $w \in \overline{\mathrm{dom}\,}\,f \, \setminus \mathrm{dom}\,f$, then since
\[\overline{\mathrm{dom}\,}f \subseteq \overline{K+C}\]
we infer that
\[\sigma_{\mathcal{V}}(w-\widehat{x}_K)=+ \infty.\]
Consequently there exists a sequence $\{v^*_{n}\}_{n \in \mathbb{N}} \in \mathcal{V} \subseteq \partial f(\widehat{x}_{K})$ such that $\langle w-\widehat{x}_K,v^*_{n}\rangle \rightarrow + \infty$. It follows that for every $u^* \in X^*$,
\[F_{\partial f}(w,u^*) \geq \langle w,x^*_{n}\rangle +\langle 0,u^* \rangle - \langle \widehat{x}_K,x^*_{n}\rangle = \langle w-\widehat{x}_K,x^*_{n} \rangle \rightarrow + \infty.\]
As $u^*$ is arbitrary, we obtain
\[\mathrm{dom}\,F_{\partial f} \subseteq \mathrm{dom}\,f \times \overline{\mathrm{dom}\,}f^*.\]
On the other hand, by \eqref{eq:4.4.3}, 
$\overline{\mathrm{dom}\,}f^* \subseteq \overline{\mathcal{V}+C^{\circ}}$. Thus, given $y^* \in \overline{\mathrm{dom}\,}f^* \, \setminus \mathrm{dom}\,f^*$ we have
\[\sigma_{K}(y^*)= +\infty\,,\]
and there exists a sequence  $\{\widehat{x}_{n}\}_{n \in \mathbb{N}} \subseteq K$ such that $\langle \widehat{x}_{n},y^* \rangle \rightarrow +\infty$. Since $0 \in \mathcal{V} \subseteq \partial f(\widehat{x}_{n})$ for all $ n \in \mathbb{N}$, given $z \in X$ we have
\[F_{\partial f}(z,y^*) \geq \langle z,0\rangle + \langle \widehat{x}_{n},y^* \rangle -\langle \widehat{x}_{n},0 \rangle \rightarrow + \infty.\]
Therefore, $\mathrm{dom}\,F_{\partial f} \subseteq \mathrm{dom}\,f \times \mathrm{dom}\,f^*$ and by \eqref{eq:4.4.4} we deduce that \eqref{eq:domains} holds.

\medskip

\noindent \textit{Step 3.} We prove that for every $(x,x^*) \in \mathrm{dom}\,f \times \mathrm{dom}\,f^*$ and $\e >0$
\[\partial_{\e}(f^* \oplus f)(x,x^*) \cap \mathrm{Gr}(\partial f) \neq \varnothing\]
To this end, fix $\e>0$. From \eqref{eq:4.4.3}, we may find $\widehat{y} \in K \subseteq \mathrm{argmin}\,f$, such that $\sigma_{K}(x^*) \leq \langle \widehat{y},x^* \rangle+ \e/2$. Since $f(\widehat{y})=0$, we obtain
\[\widehat{y} \in \partial_{\e /2} f^*(x^*).\]
We also have that $\partial_{\e /2} f(x) \cap \mathcal{V} \neq \varnothing$ for any $\e >0$. To see this, as $x \in \mathrm{dom}\, f$, take $v^* \in \mathcal{V}$ with
\[f(x)=\sigma_{\mathcal{V}}(x-\widehat{x}_K) \leq \langle x-\widehat{x}_K,v^* \rangle +\frac{\e}{2} = \langle x,v^* \rangle -\langle \widehat{x}_{K},v^* \rangle +\frac{\e}{2} = \langle x,v^* \rangle -f^*(v^*) +\frac{\e}{2}\]
where we used Step~1 and the fact that $f(\widehat{x}_{K})=0$. Now, since $\mathcal{V} \subseteq \partial f(\widehat{y})$ clearly
\[\partial_{\e /2}f(x) \cap \partial f (\widehat{y}) \neq \varnothing \]
For $y^* \in \partial_{\e /2}f(x) \cap \partial f (\widehat{y})$ we have: 
\[(\widehat{y},y^*) \in \partial_{\e}(f^* \oplus f)(x,x^*) \bigcap \mathrm{Gr}(\partial f) \neq \varnothing
\]
Thus, both conditions of Corollary~\ref{Cor:Singleton Criterion} are satisfied, concluding the proof.
\end{proof}

\begin{Rem}\normalfont
\medskip

(i). The compatibility condition~\eqref{eq:compatibility condition} is satisfied in several natural situations. In particular, any of the following assumptions (a)-(b) ensures~\eqref{eq:compatibility condition}: \smallskip\newline
(a) $K$ is compact \quad (b) $\mathcal{V}+C^{\circ}$ is closed \quad
(c) $\mathrm{int}(\,\mathrm{dom}\,f^*) \neq \varnothing$ \quad 
(d) $X$ is finite dimensional \smallskip\newline 
(The later case which will be discussed in the next subsection.)
\medskip

(ii). In the degenerate case where $K=\{0\}$ and $C=X$, we recover \cite[Theorem~5.3]{BBBRW2007}, namely the result for sublinear functions. In a dual manner, if $\mathcal{V}=\{0\}$ and $C=\{0\}$, we recover the case of indicator functions of closed convex sets. 
\medskip

(iii). Computing conjugates we see that both $f$ and $f^*$ belong to the same family of functions 
(up to natural adjustments). This is consistent with \cite[Theorem~5.7]{BBBRW2007}, which states that 
if the subdifferential of the function $f^*$ satisfies $\mathcal{F}_{\partial f^*}=\{F_{\partial f^*}\}$, then so does the subdifferential of $f$.
\end{Rem}
\noindent Combining Theorem~\ref{Thm:Main2} with Corollary~\ref{mainCor} we obtain Theorem~B. \medskip\newline
%\begin{Thm}
%Let $X$ be an RN space, $f : X \to \mathbb{R}\cup \{+\infty \}$ 
%be proper, convex, lsc with $\mathrm{int}(\mathrm{dom}\,f) 
%\neq\varnothing$ and 
%$\mathrm{int}(\mathrm{dom}\,f^{*}) \neq \varnothing$. Then the %Fitzpatrick family $\mathcal{F}_{\partial f}$ consists of a single %element
%if and only if there exist a constant $c \in \mathbb{R}$, a %functional $\overline{x}^* \in X^*$ and closed convex sets $K,C 
%\subseteq X$ and $\mathcal{V} \subseteq X^*$ where
%\begin{itemize}
%    \item $C$ is a cone, $\mathcal{V}$ is $w^*$-closed
%    \item $0 \in \mathcal{V} \perp K-K$.
%
%\end{itemize}
%such that for every $\widehat{x} \in K$:
%\begin{equation}\label{eq:definition of f}
%    f(x) \;=\; \sigma_{\mathcal{V}}(x - \widehat{x}) 
%    \;+\; i_{\overline{K + C}}(x) 
%    \;+\; \langle x, \overline{x}^* \rangle 
%    \;+\; c\,,  \quad \text{for} \,\, x \in X.
%\end{equation}
%\end{Thm}
\noindent\rule{4cm}{1.6pt}\smallskip \newline
\noindent\textit{Proof of Theorem~B.} \smallskip\newline 
It is sufficient to verify the compatibility condition~\eqref{eq:compatibility condition}. A close inspection of the proof of Proposition~\ref{Prop:typos f}, shows that \eqref{eq:compatibility condition} holds whenever $\mathrm{int}(\mathrm{dom}\,f^*) \neq \varnothing$.\hfill$\square$
\bigskip\newline
\noindent Combining the above result with the main result of Section~3 yields the following.

\begin{Cor}\label{Combination}
Let $X$ be a space with the RNP, and $A: X \rightrightarrows X^*$ a 3-monotone and maximal monotone operator 
admitting only one representative function. If $\mathrm{int}(\mathrm{dom}\,(A))$ and $\mathrm{int}(\mathrm{Im}\,(A))$ are nonempty, 
then $A= \partial f$ for some proper convex lsc function $f$ of the form \eqref{eq:definition of f}.
\end{Cor}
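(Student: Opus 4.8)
\textit{Proof proposal for Corollary~\ref{Combination}.} The plan is to chain together the two main theorems of the paper. We are given that $A$ is maximal monotone, $3$-monotone, with $\mathcal{F}_A = \{F_A\}$, and with $\mathrm{int}(\mathrm{dom}\,A)\neq\varnothing$ and $\mathrm{int}(\mathrm{Im}\,A)\neq\varnothing$. First I would invoke Theorem~A: since $A$ is $3$-monotone with $\mathcal{F}_A$ a singleton, $A$ is cyclically monotone, hence by Rockafellar's theorem $A = \partial f$ for some proper convex lsc function $f : X \to \mathbb{R}\cup\{+\infty\}$. At this point the hypothesis $\mathcal{F}_{\partial f} = \{F_{\partial f}\}$ is in force.

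Next I would translate the hypotheses on $A$ into the hypotheses of Theorem~B (equivalently, of Corollary~\ref{mainCor}). Since $A = \partial f$, we have $\mathrm{dom}\,A = \mathrm{dom}\,\partial f \subseteq \mathrm{dom}\,f$, and in fact $\mathrm{int}(\mathrm{dom}\,A)\subseteq\mathrm{int}(\mathrm{dom}\,f)$; combined with $\mathrm{int}(\mathrm{dom}\,A)\neq\varnothing$ this gives $\mathrm{int}(\mathrm{dom}\,f)\neq\varnothing$. Similarly, $\mathrm{Im}\,A = \mathrm{Im}\,\partial f \subseteq \mathrm{dom}\,f^*$, so $\mathrm{int}(\mathrm{Im}\,A)\neq\varnothing$ yields $\mathrm{int}(\mathrm{dom}\,f^*)\neq\varnothing$. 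Thus $f$ satisfies all the standing assumptions of Theorem~B: $X$ is RNP, $f$ is proper convex lsc, $\mathrm{int}(\mathrm{dom}\,f)\neq\varnothing$, $\mathrm{int}(\mathrm{dom}\,f^*)\neq\varnothing$, and $\mathcal{F}_{\partial f}$ is a singleton.

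Finally I would apply Theorem~B (or directly Corollary~\ref{mainCor}, which is the \emph{only if} direction that we need here) to conclude that there exist $c\in\mathbb{R}$, $\overline{x}^* \in X^*$, and closed convex sets $K, C \subseteq X$, $\mathcal{V}\subseteq X^*$ with $C$ a cone, $\mathcal{V}$ $w^*$-closed convex, and $0\in\mathcal{V}\perp K-K$, such that $f$ has the form \eqref{eq:definition of f}. Since $A = \partial f$, this is exactly the assertion.

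There is essentially no obstacle here beyond bookkeeping: the only point requiring a moment's care is the passage from $\mathrm{int}(\mathrm{dom}\,A)\neq\varnothing$ to $\mathrm{int}(\mathrm{dom}\,f)\neq\varnothing$ (and the analogous statement for ranges), which uses the elementary inclusions $\mathrm{dom}\,\partial f\subseteq\mathrm{dom}\,f$ and $\mathrm{Im}\,\partial f\subseteq\mathrm{dom}\,f^*$ together with monotonicity of interior under inclusion. The real content of the corollary is already contained in Theorems~A and~B; this statement merely records their composition.
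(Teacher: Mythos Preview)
Your proposal is correct and matches the paper's own approach: the corollary is stated immediately after Theorem~B with the sentence ``Combining the above result with the main result of Section~3 yields the following,'' and no further proof is given. Your bookkeeping step (passing from $\mathrm{int}(\mathrm{dom}\,A)$ and $\mathrm{int}(\mathrm{Im}\,A)$ to $\mathrm{int}(\mathrm{dom}\,f)$ and $\mathrm{int}(\mathrm{dom}\,f^*)$ via the inclusions $\mathrm{dom}\,\partial f\subseteq\mathrm{dom}\,f$ and $\mathrm{Im}\,\partial f\subseteq\mathrm{dom}\,f^*$) is the only detail the paper leaves implicit, and you have handled it correctly.
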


\begin{Rem}\normalfont
It remains open to determine whether the conditions ${\mathrm{int}(\mathrm{dom}\,f)\neq\varnothing}$ and ${\mathrm{int}(\mathrm{dom}\,f^*) \neq \varnothing}$ as well as the assumption that the ambient space is an RNP space are necessary for Theorem~B (and consequently for Corollary~\ref{Combination}). 
\end{Rem}
\noindent On the other hand, the only non-subdifferential operators known to admit a unique representative function are the skew-symmetric linear operators. 
This motivates the following conjecture.
\begin{Conj}
A maximal monotone operator admitting a unique representation 
must either be the subdifferential of a function of the form~\eqref{eq:definition of f}
or a linear skew-symmetric operator.
\end{Conj}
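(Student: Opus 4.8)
\medskip
\noindent\textit{A possible approach.}
The idea is to decompose a maximal monotone operator $A$ with $\mathcal{F}_A=\{F_A\}$ into a ``potential'' part that is forced to be a subdifferential of the form~\eqref{eq:definition of f} and a ``rotational'' part that is forced to be a bounded skew-symmetric linear operator, and then to rule out genuine mixtures. By Proposition~\ref{prop:Translation Invariance} one may normalise so that $(0,0)\in\mathrm{Gr}(A)$, hence $0\in\mathrm{dom}(A)\cap\mathrm{Im}(A)$ and $F_A(0,0)=0$; this is harmless and streamlines the bookkeeping below.

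First I would record the consequences of unique representability that require no $3$-monotonicity. By the Corollary following Lemma~\ref{lem:marginals}, $F_A(\cdot,v^*)=g_{A,v^*}^*$ and $F_A(w,\cdot)=f_{A,w}^*$ for every $w\in\mathrm{dom}(A)$ and $v^*\in\mathrm{Im}(A)$, so every partial section of $F_A$ through a point of the domain or of the range is convex and lsc. Since $F_A(x,v^*)-\langle x,v^*\rangle\geq 0$ vanishes exactly on the fibre $A^{-1}(v^*)$, each such fibre is a closed convex set, and dually each fibre $A(w)$ is $w^*$-closed convex. These convex ``flats'' are the natural candidates for the sets $K$ and $K-K$ of~\eqref{eq:definition of f}, so this step already tells us where the data of Theorem~B must come from.

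Next I would quantify the obstruction to $3$-monotonicity. For $(x,x^*)\in\mathrm{Gr}(A)$ and $v^*\in\mathrm{Im}(A)$ set $\delta(x,x^*,v^*):=F_A(x,v^*)+g_{A,v^*}(x^*)-\langle x,x^*\rangle\geq 0$. The computation in the proof of Theorem~A shows that $A$ is $3$-monotone precisely when $\delta\equiv 0$, and in that case Theorem~A yields $A=\partial f$ while Theorem~B (together with its finite-dimensional variant, in which the interior-domain hypotheses are superfluous) puts $f$ in the form~\eqref{eq:definition of f}; so one alternative of the conjecture is established and it remains to handle $\delta\not\equiv 0$. Tracing the chain of inequalities in that proof, every strict inequality produces an honest $3$-cycle in $\mathrm{Gr}(A)$ violating $3$-monotonicity; I would try to show that the size of this cyclic defect is governed by a bounded skew-symmetric bilinear form on $\overline{\mathrm{span}}\,(\mathrm{Gr}(A))$, equivalently by a skew linear operator $T$, and then study $B:=A-T$ on the relevant closed subspace, aiming to show that $B$ is again maximal monotone with $\mathcal{F}_B=\{F_B\}$ and now $3$-monotone --- hence of the form~\eqref{eq:definition of f} by the previous paragraph --- with $T$ absorbing all of $\delta$.

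The real difficulty is the final reconciliation: showing that a nonzero skew part $T$ and a nontrivial potential part $B=\partial f$ cannot coexist in a uniquely representable operator, so that either $\delta\equiv 0$ (pure subdifferential) or $B$ is constant and $A=T$ is pure skew. The obstacle is that there is no inf-convolution or Attouch--Br\'ezis type sum formula for $F_{B+T}$ when $B$ is not linear, and neither the Br\o{}nsted--Rockafellar argument nor the self-duality $F_A=P_A$ (Lemma~\ref{lem:singleton characterization}) exploited in this paper adapts in an evident way. A reasonable intermediate target, explicitly left open in Remark~\ref{Rem:3.7}(ii), is the paramonotone (equivalently $3^*$-monotone) case: establishing that a uniquely representable maximal monotone operator in that class is necessarily a subdifferential would be the decisive special case, after which the linear classification of~\cite{BBW2009} and Remark~\ref{Rem:3.7}(i) should close the gap.
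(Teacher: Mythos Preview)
The statement you are addressing is a \emph{conjecture} in the paper, not a theorem: the authors explicitly present it as open and provide no proof. There is therefore nothing in the paper to compare your attempt against, and your write-up should be read as a research outline rather than a proof.

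As an outline it is honest about its gaps, but those gaps are genuine and substantial. The central move --- extracting a bounded skew-symmetric bilinear form $T$ from the $3$-cycle defect $\delta$ and showing that $B:=A-T$ is again maximal monotone with $\mathcal{F}_B$ a singleton --- is entirely unjustified. There is no reason the defect should be \emph{bilinear}, no mechanism offered for why subtracting a linear skew operator from a general maximal monotone $A$ preserves monotonicity (it does not in general), and no argument that the Fitzpatrick family of $B$ collapses. You acknowledge this yourself when you call the reconciliation step ``the real difficulty'' and note that no available tool adapts; but that is precisely the content of the conjecture, so the outline does not reduce the problem to anything easier. Your proposed intermediate target, the paramonotone case, is exactly what Remark~\ref{Rem:3.7}(ii) records as open with current techniques.

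There is a second gap even in the ``easy'' branch. When $\delta\equiv 0$, Theorem~A gives $A=\partial f$, but invoking Theorem~B to put $f$ in the form~\eqref{eq:definition of f} requires either finite dimensions or the RNP together with $\mathrm{int}(\mathrm{dom}\,f)\neq\varnothing$ and $\mathrm{int}(\mathrm{dom}\,f^*)\neq\varnothing$. The conjecture carries no such hypotheses, so this branch is not settled either; the paper flags exactly this issue in the remark preceding the conjecture.
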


\subsection{The Finite-Dimensional Case}

\noindent We now focus on the finite-dimensional setting, where the technical assumptions required in Theorem~\ref{Thm:Main2} can be entirelly removed. Indeed, in the aforementioned statements conditions on the Banach space $X$ and on the domains of $f$ and $f^*$ were needed solely to guarantee the existence of a maximizer in \eqref{eq:Definition of Fitzpatrick2}, i.e., to establish Lemma~\ref{Lem:Fitzpatrick maximizer}. In finite dimensions, however, such a maximizer always exists under minimal assumptions, even without requiring the Fitzpatrick family being singleton. Throughout this section, $E$ denotes a finite-dimensional Euclidean space.  

\medskip

Before proving the main result, we first recall some standard properties of convex functions in finite dimensions. Let $f \colon E \to \mathbb{R}\cup \{+\infty \}$ be a proper convex function. Its domain lies within the affine hull $\mathrm{Aff}(\mathrm{dom}\,f)$. The restriction of $f$ to this space has nonempty interior, denoted by $\operatorname{rint}(\mathrm{dom}\,f)$. Fix $u \in \mathrm{Aff}(\mathrm{dom}\,f)$ and define the associated linear subspace of $E$ given by 
\[U:=\mathrm{Aff}(\mathrm{dom}\,f)-\{u\}.\]
For each $x^* \in E^*$, we decompose
\[
x^* = x^*_{\mathrm{par}} + (x^*)^\perp,
\]
where $x^*_{\mathrm{par}}$ is the projection onto $U$ and $(x^*)^\perp \in U^\perp$. Consequently, for all $x,y \in \mathrm{Aff}(\mathrm{dom}\,f)$, we have
\begin{equation}\label{eq:orthogonality decomposition}
\langle x-y,(x^*)^{\perp}\rangle =0.
\end{equation}
\noindent For $\e \geq 0$ we define
\[
\partial_{\e}^{\mathrm{par}} f(x) := \{\, x^*_{\mathrm{par}} : x^* \in \partial_{\e} f(x)\,\},
\]
the projection onto $U$ of the set of $\e$-subgradients of $f$ at $x$. Restricting $f$ to its affine hull,
\[
\tilde{f} \colon= f|_{\mathrm{Aff}(\mathrm{dom}\,f)} \,\,\colon \,\,\mathrm{Aff}(\mathrm{dom}\,f) \to \mathbb{R}\cup \{+\infty \},
\]
we obtain $\operatorname{rint}(\mathrm{dom}\,f) = \operatorname{int}(\mathrm{dom}\,\tilde{f}) \neq \varnothing$. Hence for every $\e \geq 0$ and $x \in \operatorname{rint}(\mathrm{dom}\,f)$, the set $\partial_{\e}^{\mathrm{par}} f(x)$ is bounded. More precisely:
\begin{equation}\label{eq:local boundedness}
    \|y_{\mathrm{par}}^{*}\| \leq M, \quad \text{for every } \quad y^* \in \partial_{\e} f(x)
\end{equation}

Analogously, let $v \in \mathrm{Aff}(\mathrm{dom}\,f^*)$ and define 
\[V = \mathrm{Aff}(\mathrm{dom}\,f^*)-\{v\}.\] 
Any $y \in E$ admits the decomposition
\[
y = y_{\mathrm{par}} + y^\perp,
\]
with $y_{\mathrm{par}} \in V$ and $y^\perp \in V^\perp$. If $y \in \partial_{\e} f^*(x^*)$ for some $\e \geq 0$ and $x^* \in \operatorname{rint}(\mathrm{dom}\,f^*)$, then $y_{\mathrm{par}}$ is also bounded, similar to \eqref{eq:local boundedness}.  

Fixing $u \in \mathrm{Aff}(\mathrm{dom}\,f)$ and $v \in \mathrm{Aff}(\mathrm{dom}\,f^*)$, one checks that for any $(y,y^*) \in \mathrm{Gr}(\partial f)$, the projection $(y_{\mathrm{par}},y^*_{\mathrm{par}})$ also lies in $\mathrm{Gr}(\partial f)$. We therefore define the parallel projection of the graph with respect to $U$ and $V$:
\[
\mathrm{Gr}^{\mathrm{par}}(\partial f) := \left\{ (y_{\mathrm{par}},y^*_{\mathrm{par}}) : (y,y^*) \in \mathrm{Gr}(\partial f)\right\} \subseteq (U \times V) \bigcap \mathrm{Gr}(\partial f).
\]

We are now in position to show that the supremum in \eqref{eq:Definition of Fitzpatrick2} is always attained in finite dimensions whenever $(x,x^*) \in \operatorname{rint}(\mathrm{dom}\,f) \times \operatorname{rint}(\mathrm{dom}\,f^*)$:

\begin{Prop}\label{Thm:Finite Fitzpatrick maximizer}
Let $f \colon E \to \mathbb{R}\cup \{+\infty \}$ be a proper, convex and lsc on a finite-dimensional Euclidean space $E$. Suppose
\[
(x, x^*) \in \operatorname{rint}(\mathrm{dom}\,f) \times \operatorname{rint}(\mathrm{dom}\,f^*).
\]
Then 
\[\mathcal{M}_{\partial f}(x,x^*) \neq \varnothing.\]
where $\mathcal{M}_{\partial f}(x,x^*)$ is defined as in \eqref{eq:definition of M_A}.
\end{Prop}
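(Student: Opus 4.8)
The statement asserts that for $(x,x^*)$ in the relative interiors of $\mathrm{dom}\,f$ and $\mathrm{dom}\,f^*$, the supremum defining $F_{\partial f}(x,x^*)$ in \eqref{eq:Definition of Fitzpatrick2} is attained on $\mathrm{Gr}(\partial f)$. The plan is to take a maximizing sequence and extract a convergent subsequence, using the parallel-projection machinery developed just above the statement to ensure the relevant components stay bounded. Concretely, I would first note that $F_{\partial f}(x,x^*)\le f(x)+f^*(x^*)<+\infty$ since $\Phi_f\in\mathcal F_{\partial f}$ and $F_{\partial f}$ is the minimal representative, so a maximizing sequence $\{(a_n,a_n^*)\}\subseteq\mathrm{Gr}(\partial f)$ exists, satisfying
\[
\langle a_n,x^*\rangle+\langle x,a_n^*\rangle-\langle a_n,a_n^*\rangle \;\longrightarrow\; F_{\partial f}(x,x^*).
\]

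The key reduction is to replace $(a_n,a_n^*)$ by its parallel projection $(a_{n,\mathrm{par}},a^*_{n,\mathrm{par}})\in\mathrm{Gr}^{\mathrm{par}}(\partial f)$. Since $x\in\mathrm{Aff}(\mathrm{dom}\,f)$ and $x^*\in\mathrm{Aff}(\mathrm{dom}\,f^*)$, the orthogonality relation \eqref{eq:orthogonality decomposition} (applied with respect to both $U$ and $V$) shows that $\langle a_n,x^*\rangle=\langle a_{n,\mathrm{par}},x^*\rangle+\langle a_n^\perp,x^*_{\mathrm{par}}\rangle$ and similarly for the other terms; a short computation shows the objective value is unchanged — or at least not decreased — when one passes to the parallel projections, because the $\perp$-components pair off and cancel. (This is the point to be careful with: one must check that $\langle a_{n,\mathrm{par}},a^*_{n,\mathrm{par}}\rangle$ relates correctly to $\langle a_n,a_n^*\rangle$; the cross terms vanish precisely because $U\perp U^\perp$ and $V\perp V^\perp$, and the $\langle a_n^\perp,(x^*)^\perp\rangle$-type contributions are constant since $a_n^\perp$ is forced by $a_n\in\mathrm{dom}\,f$ and $a_n^*{}^\perp$ by $a_n^*\in\mathrm{dom}\,f^*$.) Thus without loss of generality the maximizing sequence lies in $\mathrm{Gr}^{\mathrm{par}}(\partial f)\subseteq U\times V$.

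Now I would invoke the boundedness estimates: since $x\in\mathrm{rint}(\mathrm{dom}\,f)=\mathrm{int}(\mathrm{dom}\,\tilde f)$ and $a_n^*\in\partial f(a_n)$ combined with the objective being bounded forces, via the $\e$-subdifferential characterization, that $a_n\in\partial_{\e_n}f^*(x^*)$ and $a_n^*\in\partial_{\e_n}f(x)$ for $\e_n\to 0$ (exactly as in the proof of Lemma~\ref{Lem:Fitzpatrick maximizer}, relation~\eqref{eq:4.8.1} and its consequences); then \eqref{eq:local boundedness} and its analogue for $f^*$ give that $\{a_{n,\mathrm{par}}^*\}$ and $\{a_{n,\mathrm{par}}\}$ are bounded in the finite-dimensional spaces $V^*$-side and $U$. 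By Bolzano–Weierstrass, pass to a subsequence with $a_{n,\mathrm{par}}\to a$ and $a^*_{n,\mathrm{par}}\to a^*$. Since $\mathrm{dom}\,f$ has nonempty relative interior, $\mathrm{Gr}(\partial f)$ is closed in $E\times E^*$, so $(a,a^*)\in\mathrm{Gr}(\partial f)$. Continuity of the bilinear objective then yields
\[
\langle a,x^*\rangle+\langle x,a^*\rangle-\langle a,a^*\rangle \;=\; F_{\partial f}(x,x^*),
\]
so $(a,a^*)\in\mathcal M_{\partial f}(x,x^*)$, proving the claim.

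The main obstacle I anticipate is the bookkeeping in the reduction to parallel projections: one must verify cleanly that the Fitzpatrick objective is insensitive to the $\perp$-components of a point of $\mathrm{Gr}(\partial f)$ when evaluated at $(x,x^*)$ with $x,x^*$ in the respective affine hulls, and that $(a_{n,\mathrm{par}},a^*_{n,\mathrm{par}})$ indeed remains in the graph (this last fact is asserted in the paragraph preceding the proposition). Once that is in hand, the compactness argument is routine — the whole role of finite-dimensionality is to upgrade the local boundedness of $\e$-subdifferentials into actual convergence of a subsequence, bypassing the Brøndsted–Rockafellar detour and the norm-to-norm upper semicontinuity hypothesis used in Lemma~\ref{Lem:Fitzpatrick maximizer}.
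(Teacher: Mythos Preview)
Your approach is essentially the paper's: reduce to parallel projections via the orthogonality relation~\eqref{eq:orthogonality decomposition}, then use boundedness of $\varepsilon$-subdifferentials on the relative interior to obtain compactness, and conclude by closedness of the graph. The paper phrases the last step as ``continuous function on a compact set'' rather than extracting a subsequence, but that is cosmetic.

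There is one slip. You claim $a_n\in\partial_{\varepsilon_n}f^*(x^*)$ and $a_n^*\in\partial_{\varepsilon_n}f(x)$ with $\varepsilon_n\to 0$, ``exactly as in the proof of Lemma~\ref{Lem:Fitzpatrick maximizer}''. But that lemma assumed $F_{\partial f}(x,v^*)=\Phi_f(x,v^*)$, which is \emph{not} a hypothesis of the present proposition; without it the $\varepsilon_n$ need not tend to~$0$. What a maximizing sequence actually gives is
\[
\bigl[f(x)+f^*(a_n^*)-\langle x,a_n^*\rangle\bigr]+\bigl[f(a_n)+f^*(x^*)-\langle a_n,x^*\rangle\bigr]\;\le\; f(x)+f^*(x^*)-F_{\partial f}(x,x^*)+o(1),
\]
so both (nonnegative) bracketed terms are eventually bounded by any fixed $\rho>f(x)+f^*(x^*)-F_{\partial f}(x,x^*)$. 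This is precisely how the paper proceeds, invoking Lemma~\ref{Thm:F_A from f+f^*} to restrict the supremum to $\partial^{\mathrm{par}}_{\rho}(f^*\oplus f)(x,x^*)\cap\mathrm{Gr}(\partial f)$, which is bounded by~\eqref{eq:local boundedness} and closed, hence compact. Since~\eqref{eq:local boundedness} applies for any fixed $\varepsilon\ge 0$, your argument goes through unchanged once you replace ``$\varepsilon_n\to 0$'' by ``$\varepsilon_n$ bounded by some fixed~$\rho$''.
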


\begin{proof}
Since $(x,x^*)$ lies in the product of the relative interiors of $f$ and $f^*$, $F_{\partial f},\Phi_{f} \in \mathcal{F}_{\partial f}$ and $F_{\partial f}$ is minimal 
\[
F_{\partial f}(x,x^*) \leq \Phi_{f}(x,x^*) = f(x) + f^*(x^*) < +\infty,
\]
so the supremum is finite. Fix $(u,v) \in \mathrm{Aff}(\mathrm{dom}\,f) \times \mathrm{Aff}(\mathrm{dom}\,f^*)$ and define $U,V$ as above. We first restrict the supremum to parallel components. Define
\[
F^{\mathrm{par}}_{\partial f}(x,x^*) := \sup_{(y_{\mathrm{par}},y^*_{\mathrm{par}}) \in \mathrm{Gr}^{\mathrm{par}}(\partial f)}
\big\{ \langle x, y^*_{\mathrm{par}} \rangle + \langle y_{\mathrm{par}}, x^* \rangle - \langle y_{\mathrm{par}}, y^*_{\mathrm{par}} \rangle \big\}.
\]
Clearly $F_{\partial f}(x,x^*) \geq F^{\mathrm{par}}_{\partial f}(x,x^*)$. Take $\e>0$ and $(y,y^*) \in \mathrm{Gr}(\partial f)$ such that
\begin{equation}\label{eq:finite_fitz_approx}
F_{\partial f}(x,x^*) \leq 
\langle x, y^* \rangle + \langle y, x^* \rangle - \langle y,y^* \rangle + \e.
\end{equation}
Writing $y = y_{\mathrm{par}} + y^\perp$ and $y^* = y^*_{\mathrm{par}} + (y^*)^\perp$, we infer from the orthogonality relation \eqref{eq:orthogonality decomposition} that
\[
\langle x-y, (y^*)^\perp \rangle = 0
\quad \text{and} \quad
\langle y^\perp, x^* - y^*_{\mathrm{par}} \rangle = 0.
\]
Substituting into~\eqref{eq:finite_fitz_approx}, we obtain
\[
F_{\partial f}(x,x^*) \leq 
\langle x, y^*_{\mathrm{par}} \rangle + \langle y_{\mathrm{par}}, x^* \rangle - \langle y_{\mathrm{par}}, y^*_{\mathrm{par}} \rangle + \e
\leq F^{\mathrm{par}}_{\partial f}(x,x^*) + \e.
\]
As $\e > 0$ is arbitrary, it follows that
\[
F_{\partial f}(x,x^*) = F^{\mathrm{par}}_{\partial f}(x,x^*).
\]
Finally, by Lemma~\ref{Thm:F_A from f+f^*}, the supremum may be restricted to points $(y_{\mathrm{par}},y^*_{\mathrm{par}})$ lying in 
\begin{equation}\label{eq:maximizing set}
\partial^{\mathrm{par}}_{\rho}(f^* \oplus f)(x,x^*) \cap \mathrm{Gr}(\partial f),
\end{equation}
for some $\rho > f(x)+f^*(x^*)-F_{\partial f}(x,x^*)$.  
Since $(x,x^*)$ lies in the respective relative interiors of $f,f^*$, by \eqref{eq:local boundedness} we have that $\partial^{\mathrm{par}}_{k}(f^* \oplus f)(x,x^*)$ is bounded. As both $\partial^{\mathrm{par}}_{k}(f^* \oplus f)(x,x^*)$ and $\mathrm{Gr}(\partial f)$ are closed, the intersection in \eqref{eq:maximizing set} is compact. This completes the proof, as the supremum of a continuous function over a compact set is attained. 
\end{proof}

\medskip

In view of Proposition~\ref{Thm:Finite Fitzpatrick maximizer}, all results from Section~4.1 extend to the finite-dimensional case, with no extra assumptions on $\mathrm{dom}\,f$ and $\mathrm{dom}\, f^*$, up to relative adjustments. Yielding the following full characterization:

\begin{Thm}\label{thm:main4}
Let ${f \colon E \to \mathbb{R}\cup \{+\infty \}}$ be proper, convex, lsc and $E$ be finite dimensional Euclidean space. Then the Fitzpatrick family, $\mathcal{F}_{\partial f}$, consists of a single element
if and only if there exist a constant $c \in \mathbb{R}$, a functional $\overline{x}^* \in E^*$ and closed convex sets $K,C, \mathcal{V} \subseteq E$ with $C$ a cone with
\[
    0 \in \mathcal{V} \perp K-K
\]
such that, for every $\widehat{x} \in K$
\[
    f(x) \;=\; \sigma_{\mathcal{V}}(x - \widehat{x}) 
    \;+\; i_{\overline{K + C}}(x) 
    \;+\; \langle x, \overline{x}^* \rangle 
    \;+\; c , \,\, \text{for}\,\, x \in E.
\]
\end{Thm}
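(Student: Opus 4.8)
The plan is to leverage the finite-dimensional analogue of the existence result, Proposition~\ref{Thm:Finite Fitzpatrick maximizer}, to reproduce the entire machinery of Section~4.1 without any assumptions on $\mathrm{dom}\,f$ or $\mathrm{dom}\,f^*$, and then to invoke Theorem~\ref{Thm:Main2} (via part~(d) of the subsequent remark, which says the compatibility condition~\eqref{eq:compatibility condition} is automatic in finite dimensions) for the converse direction. For the ``only if'' direction, I would first observe that Proposition~\ref{Thm:Finite Fitzpatrick maximizer} furnishes $\mathcal{M}_{\partial f}(x,x^*)\neq\varnothing$ for every $(x,x^*)\in \operatorname{rint}(\mathrm{dom}\,f)\times\operatorname{rint}(\mathrm{dom}\,f^*)$, which is precisely the conclusion of Lemma~\ref{Lem:Fitzpatrick maximizer}(i) that everything downstream relies on. The only genuine changes are notational: replace $\mathrm{int}$ by $\operatorname{rint}$ throughout, replace $\partial_\e$ by $\partial_\e^{\mathrm{par}}$ where local boundedness is invoked (using~\eqref{eq:local boundedness}), and replace $Df^*(v^*)$ by the relative-interior gradient — but in finite dimensions a proper convex lsc function restricted to the affine hull of its domain is, by Rademacher, Fréchet differentiable on a dense subset of $\operatorname{rint}(\mathrm{dom}\,f^*)$, so $\mathrm{dom}\,(Df^*)$ is automatically dense in $\mathrm{dom}\,f^*$ (up to the parallel identification) and $\operatorname{rint}(\mathrm{dom}\,f^*)\neq\varnothing$ is automatic whenever $f$ is proper. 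Thus the three hypotheses of Proposition~\ref{Prop:typos f} ($\mathrm{dom}\,(Df^*)$ densely contained in $\mathrm{dom}\,f^*$; $\operatorname{rint}(\mathrm{dom}\,f)\cap\mathrm{argmin}\,f\neq\varnothing$; $\mathcal{F}_{\partial f}$ singleton) are all either automatic or reduce — after the translation normalization of Corollary~\ref{mainCor} — to the singleton hypothesis alone.

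More precisely, for the ``only if'' part I would proceed as follows. Assume $\mathcal{F}_{\partial f}=\{F_{\partial f}\}$, equivalently $F_{\partial f}=\Phi_f$ everywhere by Lemma~\ref{lem:singleton characterization}(iv). Pick $\overline x\in\operatorname{rint}(\mathrm{dom}\,f)$ and $\overline x^*$ in the (relative) subdifferential $\partial\tilde f(\overline x)$, and set $\widetilde f(x):=f(x)-\langle x-\overline x,\overline x^*\rangle$, exactly as in Corollary~\ref{mainCor}; by Proposition~\ref{prop:Translation Invariance} the family of $\partial\widetilde f$ is still a singleton, $\overline x\in\operatorname{rint}(\mathrm{dom}\,\widetilde f)\cap\mathrm{argmin}\,\widetilde f$, and $\widetilde f(\overline x)=\mathrm{min}\,\widetilde f$. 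Now run the finite-dimensional versions of Corollary~\ref{Prop:Sub containment}, Proposition~\ref{Prop: values of f}, Lemma~\ref{Lem:Description of subK}, Proposition~\ref{Prop:typos f*} and Proposition~\ref{Prop:typos f} for $\widetilde f$ — each of these goes through verbatim once Lemma~\ref{Lem:Fitzpatrick maximizer}(i) is replaced by Proposition~\ref{Thm:Finite Fitzpatrick maximizer}, and the one place where $\mathrm{int}(\mathrm{dom}\,g)\neq\varnothing$ was used to apply Fact~\ref{BD2002}(ii) is replaced by $\operatorname{rint}(\mathrm{dom}\,g)\neq\varnothing$ together with Fact~\ref{BD2002}(i). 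This yields $\widetilde f(x)=\sigma_{\mathcal V}(x-\widehat x)+i_{\overline{K+C}}(x)+\mathrm{min}\,\widetilde f$ with $K=\overline{\mathrm{conv}}(\mathrm{Im}\,Df^*)$, $\mathcal V=\overline{\mathrm{conv}}\big(\bigcup_{x\in\operatorname{rint}(\mathrm{dom}\,f)}\partial f(x)\big)$, $C=(\overline{\mathrm{conv}}\,\widehat N)^\circ$, and the stated orthogonality $0\in\mathcal V\perp K-K$; undoing the translation and setting $c=f(\overline x)-\langle\overline x,\overline x^*\rangle$ gives the claimed formula for $f$.

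For the ``if'' direction, suppose $f$ has the stated form. After subtracting the linear term $\langle\cdot,\overline x^*\rangle+c$ (which does not affect whether the Fitzpatrick family is a singleton, again by Proposition~\ref{prop:Translation Invariance}), I am exactly in the setting of Theorem~\ref{Thm:Main2} with $K,C,\mathcal V\subseteq E$. Since $E$ is finite dimensional, the compatibility condition~\eqref{eq:compatibility condition} holds automatically — this is item~(d) of the remark following Theorem~\ref{Thm:Main2}; concretely one notes that in finite dimensions $\mathcal V+C^\circ$ need not be closed, but $\overline{\sigma_K+i_{\mathcal V+C^\circ}}$ and $\sigma_K+i_{\overline{\mathcal V+C^\circ}}$ are proper convex lsc functions with the same conjugate (namely $f$ up to the normalization), so they coincide. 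Theorem~\ref{Thm:Main2} then gives $\mathcal{F}_{\partial f}=\{F_{\partial f}\}$.

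The main obstacle I anticipate is not any single deep step but the bookkeeping of the relative-interior reductions: one must check that every appeal in Section~4.1 to boundedness of $\e$-subdifferentials at interior points, to the $(\|\cdot\|,w^*)$-closedness of $\mathrm{Gr}(\partial f)$, and to norm-to-norm upper semicontinuity of $\partial f^*$ at Fréchet points, has a clean finite-dimensional counterpart after projecting onto $U=\mathrm{Aff}(\mathrm{dom}\,f)-\{u\}$ and $V=\mathrm{Aff}(\mathrm{dom}\,f^*)-\{v\}$ — this is where~\eqref{eq:orthogonality decomposition}, \eqref{eq:local boundedness}, and the identification $\mathrm{Gr}^{\mathrm{par}}(\partial f)\subseteq(U\times V)\cap\mathrm{Gr}(\partial f)$ do the work, and where one should be careful that the sets $K,\mathcal V,C$ produced live in $E$ (resp.\ $E^*$) and the orthogonality $\mathcal V\perp K-K$ survives. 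Since the paper explicitly asserts that ``all results from Section~4.1 extend to the finite-dimensional case, with no extra assumptions on $\mathrm{dom}\,f$ and $\mathrm{dom}\,f^*$, up to relative adjustments,'' the honest write-up is to state this extension carefully and point to the verbatim arguments rather than re-derive them.
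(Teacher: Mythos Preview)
Your proposal is correct and follows essentially the same route as the paper's proof: replace Lemma~\ref{Lem:Fitzpatrick maximizer} by Proposition~\ref{Thm:Finite Fitzpatrick maximizer}, invoke Rademacher for the density of $\mathrm{dom}\,(Df^*)$ in $\operatorname{rint}(\mathrm{dom}\,f^*)$, swap $\mathrm{int}$ for $\operatorname{rint}$ and Fact~\ref{BD2002}(ii) for Fact~\ref{BD2002}(i), and then re-run Section~4.1 verbatim through Corollary~\ref{mainCor}. The only point where your write-up is slightly looser than the paper's is the verification of the compatibility condition~\eqref{eq:compatibility condition}: your parenthetical ``same conjugate, so they coincide'' is not quite self-contained (showing both sides have conjugate $f$ is essentially what you are trying to prove), whereas the paper argues directly that $\overline{\sigma_K+i_{\mathcal V+N}}\ge \sigma_K+i_{\overline{\mathcal V+N}}$ with equality on the dense set $\mathcal V+N\subseteq\operatorname{rint}(\mathrm{dom}\,g)$ and then applies Fact~\ref{BD2002}(i) --- but since you also cite remark~(d), this is a cosmetic issue rather than a gap.
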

\begin{proof}

The assertion follows by applying the arguments from Section~4.3 in the finite-dimensional context. In particular since $E$ is finite dimensional, Rademacher’s theorem ensures that $f^*$ is differentiable almost everywhere on $\operatorname{rint}(\mathrm{dom}\,f^*)$ and  Fitzpatrick supremum is attained in virtue of Proposition~\ref{Thm:Finite Fitzpatrick maximizer}, allowing us to reproduce the arguments of Lemma~\ref{Prop:Sub containment} and Corollary~\ref{mainCor} verbatim within~$E$. We need only check the compatibility condition of Theorem~\ref{Thm:Main2}:
\[f^*(x^*)=\overline{(\sigma_{K}+i_{\mathcal{V}+N})}(x^*) = \sigma_{K}(x^*)+i_{\overline{\mathcal{V}+N}}(x^*):= g(x^*)\]
The two convex lsc functions above, agree on $\mathcal{V}+N$ which is dense in $\mathrm{rint}(\mathrm{dom}\,g)$ and clearly as the lsc envelope of the sum is greater than the sum of the lsc envelopes, $f^* \geq g$ always. In virtue of Fact~\ref{BD2002} the conclusion follows. \end{proof}
\bigskip

\noindent \textbf{Acknowledgments} The authors would like to thank Pierre-Cyril Aubin-Frankowski, Jér\^ome Bolte, Heinz Bauschke, Guillaume Carlier, Juan Enrique Mart\'{i}nez-Legaz, Nicolas Hadjisavvas and Sebasti\'{a}n Tapia-Garc\'{i}a for useful discussion and suggestions. \smallskip\newline 
This research was funded in whole by the Austrian Science Fund (FWF) [DOI 10.55776/P36344N]. For open access purposes, the second author has applied a CC BY
public copyright license to any author accepted manuscript version arising from this submission.

\smallskip

\footnotesize
\bibliography{references}
\bibliographystyle{siam}

\bigskip

\noindent
\textbf{Sotiris Armeniakos, Aris Daniilidis} \smallskip\newline
Institut für Stochastik und Wirtschaftsmathematik, VADOR E105-04 \\
TU Wien, Wiedner Hauptstraße 8, 1040 Vienna, Austria \smallskip\newline
Email: \texttt{sotirios.armeniakos@tuwien.ac.at, aris.daniilidis@tuwien.ac.at} \\
Webpage: \href{https://www.arisdaniilidis.at}{\texttt{https://www.arisdaniilidis.at}} \smallskip\newline
Research supported by the Austrian Science Fund (FWF), grant DOI \href{https://doi.org/10.55776/P-36344N}{10.55776/P-36344N}.
\end{document}